\tikzset{>=stealth}
\tikzset{link/.style={column sep=1.8cm,row sep=0.16cm}}
	\def\MR#1{}
\theoremstyle{plain}
\newtheorem{thm}{Theorem}[section]
\newtheorem{lem}[thm]{Lemma}
\newtheorem{cor}[thm]{Corollary}
\newtheorem{prop}[thm]{Proposition}
\theoremstyle{definition}
\newtheorem{rem}[thm]{Remark}
\newtheorem{prob}[thm]{Problem}
\numberwithin{equation}{section}
\def\Q{{\mathbb Q}}
\def\R{{\mathbb R}}
\def\Z{{\mathbb Z}}
\def\C{{\mathbb C}}
\def\P{{\mathbb P}}
\def\B{{\mathbb B}}
\def\Frob{\mathop{\mathrm{Frob}}\nolimits}
\def\Gal{\mathop{\mathrm{Gal}}\nolimits}
\def\Hom{\mathop{\mathrm{Hom}}\nolimits}
\def\Pic{\mathop{\mathrm{Pic}}\nolimits}
\def\GL{\mathop{\mathrm{GL}}\nolimits}
\def\SL{\mathop{\mathrm{SL}}\nolimits}
\def\det{\mathop{\mathrm{det}}\nolimits}
\def\dim{\mathop{\mathrm{dim}}\nolimits}
\def\volHM{\mathop{\mathrm{vol}}\nolimits}
\def\Ind{\mathop{\mathrm{Ind}}\nolimits}
\def\Stab{\mathop{\mathrm{Stab}}\nolimits}
\def\Hom{\mathop{\mathrm{Hom}}\nolimits}
\def\diag{\mathop{\mathrm{diag}}\nolimits}
\def\L{\mathscr{L}}
\def\N{\mathscr{N}}
\def\M{\mathcal{M}}
\def\OO{\mathscr{O}}
\def\exp{\mathop{\mathrm{exp}}\nolimits}
\def\Sp{\mathop{\mathrm{Sp}}\nolimits}
\def\U{\mathrm{U}}
\def\O{\mathrm{O}}
\def\volHM{\mathop{\mathrm{vol}_{\mathrm{HM}}}\nolimits}
\newcommand{\bbA}{\mathbb{A}}
\newcommand{\bbB}{\mathbb{B}}
\newcommand{\bbC}{\mathbb{C}}
\newcommand{\bbG}{\mathbb{G}}
\newcommand{\bbQ}{\mathbb{Q}}
\newcommand{\bbR}{\mathbb{R}}
\newcommand{\bbZ}{\mathbb{Z}}
\newcommand{\calA}{\mathcal{A}}
\newcommand{\calB}{\mathcal{B}}
\newcommand{\calD}{\mathcal{D}}
\newcommand{\calF}{\mathcal{F}}
\newcommand{\calM}{\mathcal{M}}
\newcommand{\calS}{\mathcal{S}}
\newcommand{\calU}{\mathcal{U}}
\newcommand{\calW}{\mathcal{W}}
\newcommand{\calZ}{\mathcal{Z}}
\newcommand{\frakA}{\mathfrak{A}}
\newcommand{\frakb}{\mathfrak{b}}
\newcommand{\frakg}{\mathfrak{g}}
\newcommand{\frakk}{\mathfrak{k}}
\newcommand{\frakl}{\mathfrak{l}}
\newcommand{\frakp}{\mathfrak{p}}
\newcommand{\frakq}{\mathfrak{q}}
\newcommand{\frakS}{\mathfrak{S}}
\newcommand{\frakt}{\mathfrak{t}}
\newcommand{\fraku}{\mathfrak{u}}
\newcommand{\frakw}{\mathfrak{w}}
\newcommand{\bfi}{\mathbf{i}}
\renewcommand{\Im}{\mathop{\mathrm{Im}}}
\newcommand{\tr}{\mathrm{tr}}
\newcommand{\Mat}{\mathrm{Mat}}
\newcommand{\sgn}{\mathrm{sgn}}
\newcommand{\hol}{\mathrm{hol}}
\newcommand{\fini}{\mathrm{fin}}
\newcommand{\bs}{\backslash}
\newcommand{\cusp}{\mathrm{cusp}}
\newcommand{\vep}{\varepsilon}
\newcommand{\isom}{\cong}
\newcommand{\Irr}{\mathrm{Irr}}
\newcommand{\ad}{\mathrm{ad}}
\newcommand{\Ad}{\mathrm{Ad}}
\newcommand{\BC}{\mathrm{BC}}
\newcommand{\As}{\mathrm{As}}
\newcommand{\unr}{\mathrm{unr}}
\newcommand{\ab}{\mathrm{ab}}
\newcommand{\mr}{\mathrm{MR}}
\newcommand{\defeq}{\vcentcolon=}
	\def\MR#1{}
\begin{document}
\title[The Kodaira dimension of even-dimensional ball quotients]{The Kodaira dimension of even-dimensional ball quotients}
\author{Shuji Horinaga$^{1}$\and Yota Maeda$^{2,3}$ \and Takuya Yamauchi$^{3}$}
\email{syuuji.horinaga@ntt.com, shorinaga@gmail.com,  y.maeda.math@gmail.com,\and takuya.yamauchi.c3@tohoku.ac.jp}

\def\l@subsection{\@tocline{2}{0pt}{2.3pc}{5pc}{}}

\maketitle
\thispagestyle{titlepage}
\vspace{-1em}
\begin{center}
  \begin{minipage}{0.9\textwidth}
    \centering
    {\small
    $^{1}$ NTT Institute for Fundamental Mathematics, NTT, Inc., Japan\\
    $^{2}$ Fachbereich Mathematik, Technische Universität Darmstadt, Germany\\
    $^{3}$ Mathematical Institute, Tohoku University, Japan\\
    }
  \end{minipage}
\end{center}

\vspace{1em}

\begin{abstract}
We prove that, up to scaling, there exist only finitely many isometry classes of Hermitian lattices over $\OO_E$ of signature $(1,n)$ that admit ball quotients of non-general type, where $n>12$ is even and $E=\Q(\sqrt{-D})$ for an odd discriminant $-D<-3$.
Furthermore, we show that even-dimensional ball quotients, associated with arithmetic subgroups of $\U(1,n)$ defined over $E$, are always of general type if $n > 207$, or $n>12$ and $D>2557$.
To establish these results, we construct a nontrivial full-level cusp form of weight $n$ on the $n$-dimensional complex ball.
A key ingredient in our proof is the use of Arthur's multiplicity formula from the theory of automorphic representations.
\end{abstract}

\setcounter{tocdepth}{1}
\tableofcontents

\section{Main results}

Numerous studies have been devoted to the birational classification of modular varieties of the form $\Gamma\backslash\mathcal{D}$, where $\mathcal{D}$ is a Hermitian symmetric domain and $\Gamma$ is an arithmetic subgroup of its automorphism group.
A central theme in this line of research is determining whether such varieties are of general type.
Seminal results by Tai \cite{tai1982kodaira},
Freitag \cite{freitag1983Siegelsche} and Mumford \cite{mumford2006kodaira} established that the Siegel modular varieties $\mathcal{A}_g$ are of general type for $g\geq 7$.
Parallel progress has been made in the study of orthogonal modular varieties, notably through the work of Kond\={o} \cites{kondo1993kodaira,kondo1999kodaira}, Gritsenko, Hulek and Sankaran \cites{Gritsenko2007kodaira,Gritsenko2007Hirzebruchvolume,Gritsenko2008Hirzebruchproportionality}, and Ma \cite{ma2018kodaira}, who extended the classification to a wide range of cases, including important moduli spaces such as those parameterizing polarized K3 surfaces.
These results collectively indicate that modular varieties in higher dimensions generally exhibit intricate geometric structures.

Despite such significant progress, one major classical family of Hermitian symmetric domains remains incompletely understood
from the perspective of the Kodaira dimension: those associated with the unitary groups $\U(1,n)$.
These correspond to $n$-dimensional complex balls $\B^n$,
which are of particular interest not only in complex geometry and the study of algebraic surfaces admitting complex ball uniformization, but also in the theory of automorphic forms. 
In this paper, we investigate the birational geometry of ball quotients arising from unitary groups, specifically those of the form $\Gamma\backslash\B^n$.

Let $E$ be an imaginary quadratic field with odd discriminant $-D<-3$ and $\OO_E$ be its ring of integers.
Let $(L, h)$ be a Hermitian lattice over $\OO_E$ of signature $(1,n)$ with $n>2$, and $V\defeq L\otimes_{\OO_E} E$ be the Hermitian space over $E$.
In this paper, we assume that $n$ is even; see Remark \ref{rem: the reason why n is even} for the justification.
Then, an arithmetic subgroup $\Gamma\subset\U(1,n)$ 
acts on the $n$-dimensional complex ball 
\begin{align*}
    \mathbb{B}^n&\defeq\{v\in \P(V\otimes_E\C)\mid h(v,v)>0\}\\
&\cong \{(z_1,\cdots,z_n)\in\C^n\mid |z_1|^2+\cdots+|z_n|^2<1\}.
\end{align*}
The classical result by Baily and Borel \cite{Baily1966compactification} assures that the \textit{ball quotient} 
\[\mathcal{F}_L(\Gamma)\defeq \Gamma\backslash \mathbb{B}^n\]
admits a structure of a quasi-projective variety over $\C$ of dimension $n$.

If $\Gamma'\subset\Gamma$ and $\mathcal{F}_L(\Gamma)$ is of general type, then  $\mathcal{F}_L(\Gamma')$ is of general type as well.
Moreover, by \cite{ash2010smooth}, if one takes sufficiently small $\Gamma$, then the associated modular variety is of general type.
Hence, since it is worth considering whether $\mathcal{F}_L(\Gamma)$ is of general type for the full modular group, we simply denote the ball quotient by 
\[\mathcal{F}_L \defeq \mathcal{F}_L(\Gamma_L),\quad \Gamma_L\defeq\U(L, h)\]
throughout this paper.

    Ball quotients have drawn particular interest as moduli spaces.  Notably, the work by Allcock, Carlson, and Toledo has shown that they admit moduli interpretations as GIT quotients for cubic surfaces \cite{allcock2000complex} and threefolds \cite{allcock2011moduli}.
    Another significant series of studies on moduli interpretations of $\mathcal{F}_L$ is due to Deligne and Mostow  \cites{Deligne1986Monodromy,Mostow1986generalized}.
    In each of those cases, there exists a dominant rational map from a projective space, which implies that the Kodaira dimension is negative. Consequently, only a few explicit examples of ball quotients of general type are known  \cite{freitag1983Siegelsche}. 
Nevertheless, in what follows, we prove that there are only finitely many such exceptions.
Our main result is stated below.

\begin{thm}
\label{mainthm:finiteness_new}
Let $n>12$ be even. For an imaginary quadratic field $E$ with odd discriminant $-D<-3$, let $L$ be an $\OO_E$-Hermitian lattice of signature $(1,n)$. Then:
\begin{enumerate}
    \item (Finiteness result)
There are only finitely many pairs $(E,[L])$ (with $E$ as above and $[L]$ the isometry class of $L$ up to scaling) such that $\mathcal{F}_L$ is not of general type.
    \item (Numerical result)
If either $n>207$ and $D>3$, or $n>12$ and $D>2557$, then $\mathcal{F}_L$ is always of general type.
\end{enumerate}
\end{thm}

These estimations can be refined by restricting our attention to one of the important arithmetic subgroups, the discriminant kernel group $\widetilde{\U}(L,h)$.
\begin{thm}
\label{mainthm:discriminant kernel}
Let $E$ be an imaginary quadratic field with odd discriminant $-D<-3$.
 Then the ball quotient $\mathcal{F}_L(\widetilde{\U}(L,h))$ is of general type when $n>12$ is even.
\end{thm}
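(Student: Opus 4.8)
The plan is to run, in the ball-quotient setting, the \emph{low-weight cusp form} method of Gritsenko--Hulek--Sankaran, with the full-level weight-$n$ cusp form constructed elsewhere in this paper in the role of the ``low weight'' form --- it is low relative to the canonical weight $n+1$ of $\mathbb{B}^n$, i.e.\ the weight of the automorphy factor computing the canonical bundle. Fix a projective toroidal compactification $\overline{X}$ of $X\defeq\mathcal{F}_L(\widetilde{\U}(L,h))$; since $V$ has Witt index $1$, the Baily--Borel boundary is a finite set of cusps and the combinatorics of $\overline X$ are as simple as possible. Let $\lambda$ denote the extended $\mathbb{Q}$-line bundle whose weight-$k$ sections are the modular forms of weight $k$, let $\Delta$ be the boundary divisor, and let $B$ be the branch divisor of $\mathbb{B}^n\to X$. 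Because $D>3$ one has $\OO_E^\times=\{\pm1\}$, so every complex reflection in $\widetilde{\U}(L,h)$ has order exactly $2$; by Chevalley--Shephard--Todd $X$ is then smooth in codimension $1$, and for a suitable fan the canonical bundle formula reads
\[
K_{\overline X}=(n+1)\lambda-\tfrac12 B-\Delta ,
\]
the coefficient $1$ of $\Delta$ being the same one-dimensional-cusp computation that gives $K=2\lambda-\Delta$ for modular curves; the choice of fan making all torsion discrepancies at the cusps nonnegative is the first place $n>12$ enters.

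First I would establish this formula: identify $K_X$ with $\lambda^{\otimes(n+1)}$ away from ramification, add the $-\tfrac12 B$ coming from the order-$2$ ramification, and extend over the cusps by the ball-quotient analogue of the Ash--Mumford--Rapoport--Tai theory. Next I would invoke the main theorem of this paper: there is a nonzero cusp form $F_n$ of weight $n$ and full level for $\U(L,h)$, hence a fortiori for the subgroup $\widetilde{\U}(L,h)$; being cuspidal it gives a nonzero section of $n\lambda-\Delta$, so $n\lambda-\Delta$ is effective and $(n+1)\lambda-\Delta=\lambda+(n\lambda-\Delta)$ is big. It remains to absorb $-\tfrac12 B$: here I would classify the order-$2$ reflections $\sigma_\ell\in\widetilde{\U}(L,h)$, noting that membership in the discriminant kernel forces a severe divisibility condition relating $h(\ell,\ell)$ to $h(L^\vee,\ell)$, so that the branch divisor $B$ of the discriminant kernel is empty or has very few components. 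A Hirzebruch--Mumford volume comparison --- the unitary analogue of the Gritsenko--Hulek--Sankaran volume estimates --- then shows $\lambda-\tfrac12 B$ is still big once $n>12$, whence $K_{\overline X}=(n\lambda-\Delta)+(\lambda-\tfrac12 B)$ is big.

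Bigness of $K_{\overline X}$ yields general type once $\overline X$ has canonical singularities, so the remaining task is to verify this. For the interior quotient singularities I would apply the Reid--Tai criterion: every element $g$ of finite order in $\widetilde{\U}(L,h)$ acting nontrivially on $\mathbb{B}^n$ is either an order-$2$ reflection (already absorbed into $B$) or has fixed locus of codimension $\geq 2$ --- in particular no root of unity acts by scaling within the kernel, as that would act nontrivially on $L^\vee/L$ --- and a bookkeeping of the Reid--Tai sums $\sum_i\{a_i/m\}$ over the nontrivial powers of $g$, as in Tai's argument, shows all of them are $\geq 1$ as soon as $n>12$. For the cusp singularities I would use Tai's toroidal version with a sufficiently fine fan, again valid in dimension $>12$. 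Then the pluricanonical forms $F_n^{\,m}\cdot G$, with $G$ ranging over weight-$m$ modular forms vanishing along $B$ to order $\geq\lceil m/2\rceil$, extend to $\overline X$ and span a space of dimension $\gg m^n$, proving that $X$ is of general type.

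The main obstacle is the control of $B$: pinning down the reflective vectors of the discriminant kernel and running the Hirzebruch--Mumford volume estimate sharply enough to close at the threshold $n=14$ for \emph{every} prime $D>3$ --- rather than only for large $D$, which is what one needs for the full unitary group in Theorem~\ref{mainthm:finiteness_new} --- is the technical heart of the argument; the Reid--Tai analysis is also delicate in the edge case $n=14$ but follows well-trodden lines. Finally, everything hinges on having a cusp form of weight exactly $n$: this is the largest weight for which the twist $F_n^{\,m}\otimes(\text{weight-}m\text{ forms})$ still produces $\gg m^n$ pluricanonical sections, and producing even that single form is what makes the detour through Arthur's multiplicity formula unavoidable.
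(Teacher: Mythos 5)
Your overall strategy -- split the problem into the cusp, reflective, and elliptic obstructions (A), (B), (C), absorb the branch divisor into $\mathcal{M}$ and the boundary into $n\mathcal{L}-T$, and then conclude by bigness plus canonical singularities -- is exactly the framework of the paper's Theorem~\ref{thm:obstructions general type} and Section~\ref{section:Completion of the proof of Corollary}. Your choice for (A), namely restricting the weight-$n$ cusp form of Theorem~\ref{mainthm:existence of low slope automorphic form} from $\Gamma_L$ to the subgroup $\widetilde{\U}(L,h)$, is legitimate here because the hypothesis $n>12$ (with $D\geq 7$ prime) puts $(n,D)$ inside the range $(\star)$, so the construction via Arthur's multiplicity formula applies; the paper instead invokes a classical lift for the discriminant kernel that already works for $n>2$, but either route is adequate once (C) forces $n>12$. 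For (C) you sketch the Reid--Tai computation that the paper cites from Behrens; that is a standard bookkeeping and not where the real risk lies.

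The genuine gap is in step (B), and it is exactly where you say ``the technical heart of the argument'' is -- but you do not actually close it, and the paper's own general reflective-obstruction result does not close it either. Theorem~\ref{mainthm:reflective obstruction without assumptions} only certifies bigness of $\mathcal{M}$ under the condition $(\star\star)$, which for $n$ in the window $12<n\leq 207$ requires $D>2557$; it does not give ``$n>12$ and every prime $D>3$''. Your heuristic that membership in the discriminant kernel forces divisibility conditions on $\ell$, so that $B$ is ``empty or has very few components'', is the right intuition but it is not a proof: the discriminant kernel does in general contain reflections (compare $\widetilde{\mathrm{O}}(L)$ with its $(-2)$-reflections in the K3 setting), and the asserted sharper Hirzebruch--Mumford estimate for $\widetilde{\U}(L,h)$ would have to be carried out. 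The paper sidesteps this by appealing to the earlier result of the second author (\cite[Theorem~1.1, Corollary~1.2]{maeda2024reflective}), which resolves the reflective obstruction specifically under a discriminant-kernel-type hypothesis with a much weaker constraint on $(n,D)$; without that input, or without carrying out the corresponding volume computation for reflective vectors in $\widetilde{\U}(L,h)$, your argument does not reach the claimed threshold. In short: the skeleton is right, but you have identified rather than filled the one place where a real computation is needed, and that computation is supplied in the paper by a cited prior result rather than by the new Theorem~\ref{mainthm:reflective obstruction without assumptions}.
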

For any arithmetic subgroup $\Gamma$, there exists a Hermitian lattice $(L,h)$ such that $\Gamma\subset\Gamma_L$.
We say that \emph{$\Gamma$ is defined over $E$} if there exists such a lattice $(L,h)$, defined over $\OO_E$, satisfying $\Gamma\subset \Gamma_L$.
In these cases, there exists a dominant morphism $\mathcal{F}_L(\Gamma) \to \mathcal{F}_L(\Gamma_L)$, and we obtain the following corollary.
\begin{cor}
Assume that the same condition in Theorem \ref{mainthm:finiteness_new} (2) holds.
Then for any arithmetic subgroup $\Gamma\subset \U(1,n)$ defined over $E$, the ball quotient $\mathcal{F}_L(\Gamma)$ is of general type.
\end{cor}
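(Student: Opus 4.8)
The plan is to reduce the statement to Theorem \ref{mainthm:finiteness_new} (2) via the fact that being of general type is inherited by generically finite dominant covers. First I would unwind the hypothesis: since $\Gamma$ is defined over $E$, by definition there is a Hermitian lattice $(L,h)$ over $\OO_E$ of signature $(1,n)$ with $\Gamma\subset\Gamma_L=\U(L,h)$. Both $\Gamma$ and $\Gamma_L$ are arithmetic subgroups of $\U(1,n)$, so a comparison of covolumes gives $[\Gamma_L:\Gamma]<\infty$; hence the natural morphism $\pi\colon\mathcal{F}_L(\Gamma)\to\mathcal{F}_L(\Gamma_L)=\mathcal{F}_L$ is finite and surjective, in particular generically finite and dominant, between $n$-dimensional quasi-projective varieties.

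Next I would invoke Theorem \ref{mainthm:finiteness_new} (2): because $n$ is even and $(n,D)$ satisfies one of the two inequalities assumed in the corollary, $\mathcal{F}_L$ is of general type. It then remains to transport ``of general type'' along $\pi$. Passing to smooth projective models --- for instance a toroidal resolution of the Baily--Borel compactification, as in the proof of Theorem \ref{mainthm:finiteness_new} --- a nonzero section of a pluricanonical bundle on a model of $\mathcal{F}_L$ pulls back to a nonzero section of the corresponding pluricanonical bundle on a model of $\mathcal{F}_L(\Gamma)$, and enough such sections generically separate points; thus $\kappa(\mathcal{F}_L(\Gamma))\geq\kappa(\mathcal{F}_L)=n=\dim\mathcal{F}_L(\Gamma)$, so $\mathcal{F}_L(\Gamma)$ is of general type. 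This is precisely the elementary remark recorded near the start of the paper (if $\Gamma'\subset\Gamma$ and $\mathcal{F}_L(\Gamma)$ is of general type, then so is $\mathcal{F}_L(\Gamma')$), applied here with $\Gamma'=\Gamma$ and the larger group $\Gamma_L$.

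I do not anticipate a genuine obstacle: the corollary is a formal consequence of Theorem \ref{mainthm:finiteness_new} (2). The only point deserving care is that $\mathcal{F}_L(\Gamma)$ and $\mathcal{F}_L$ may carry quotient singularities and a nontrivial boundary, so that ``of general type'' has to be understood via smooth projective models; one must check that the ramification divisor of $\pi$, the boundary, and the exceptional divisors of the chosen resolutions do not spoil the inequality of Kodaira dimensions. This is standard and is handled in exactly the same way as the corresponding step in the proof of Theorem \ref{mainthm:finiteness_new}, so in the write-up I would simply refer back to that discussion.
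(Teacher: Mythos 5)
Your proposal is correct and is precisely the argument the paper has in mind: the paper simply notes before the corollary that $\Gamma\subset\Gamma_L$ gives a dominant (finite) morphism $\mathcal{F}_L(\Gamma)\to\mathcal{F}_L(\Gamma_L)$ and appeals to the earlier remark that general type descends along such maps, with Theorem~\ref{mainthm:finiteness_new}~(2) providing general type for the target. You have just spelled out the standard pullback-of-pluricanonical-sections justification for that remark, which is exactly what the paper leaves implicit.
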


There are several works on the Kodaira dimension of ball quotients by using differential geometric methods \cite{Bakker2017kodaira,cadorel2021symmetric}. 
In these papers, the general type property is established under extra assumptions on the arithmetic subgroup $\Gamma$, such as the torsion-freeness. 
In the present article, we tackle these problems from representation-theoretic and arithmetic 
viewpoints, which allows us to remove such hypotheses on $\Gamma$.
Classically, the Picard modular surfaces $\U(1,2)$ are known to be of general type, up to finitely many exceptions, whenever the discriminant is sufficiently large \cite{holzapfel1998picard}.
In other words, only finitely many Picard modular surfaces fail to be of general type. Theorem \ref{mainthm:finiteness_new} generalizes this result to higher dimensions.

The most notable aspect of this paper is the use of the theory of automorphic representations, beyond the classical theory of modular forms, to prove the main theorem, even though the statement itself lies purely within algebraic geometry.
Before introducing our approach, we briefly review previous work on determining the Kodaira dimension through the theory of modular forms.

It has been classically shown that the existence of modular forms with appropriate vanishing orders along cusps enables the construction of pluricanonical forms on modular varieties \cite[Section IV]{ash2010smooth}; see Subsection \ref{subsec:obstruction spaces} for details. The previous research in this direction can be broadly categorized into two main approaches.
The first approach analyzes the Fourier-Jacobi expansion of modular forms and estimates the vanishing of its coefficients. This method has been extensively used in the context of Siegel modular varieties \cites{dittmann2021harmonic,freitag1983Siegelsche,gritsenko1996moduli,hulek1994kodaira,mumford2006kodaira,sankaran1997moduli,tai1982kodaira}.
The second approach is based on the use of liftings of modular forms. Initiated by Kond\=o \cites{kondo1993kodaira,kondo1999kodaira}, Gritsenko, Hulek, and Sankaran \cite{Gritsenko2007kodaira} using the Borcherds lift \cite{borcherds1998automorphic} in the study of moduli spaces of K3 surfaces, this technique has been extended to orthogonal modular varieties of higher dimension, even in cases where no moduli interpretation is known. In these settings, Gritsenko, Hulek, and Sankaran \cite{Gritsenko2008Hirzebruchproportionality}, and Ma \cite{ma2018kodaira}, applied the Gritsenko lift \cite{gritsenko1994modular} to prove that such varieties are of general type. 

A common feature of these studies is the construction of modular forms as explicit functions on $\mathcal{D}$. 
An advantage of this method lies in the fact that the level and non-vanishing nature of the constructed modular forms are explicitly determined.
However, among these studies, especially those that rely on liftings, face two inherent difficulties. First, lifting constructions rely heavily on the fact that $\mathcal{D}$ is a tube domain. 
Modular forms on unitary groups, which we shall work on, can be constructed by embedding the unitary group into an orthogonal group; however, constructing them directly on unitary groups, without relying on such embeddings, is generally a difficult task.
Second, it is expected that most modular forms are not obtained via such lifting constructions \cite{1989_Savin_limit_multiplicity}; for specific groups, see, e.g., \cite{2021_RSS_counting_automorphic_rep_GSp_4}.
Hence, it is natural to pursue approaches to constructing geometrically useful automorphic forms that are independent of lifting constructions.

In contrast to the above approaches, this paper introduces an automorphic representation-theoretic method that is applicable even to complex balls $\B^n$, which are not tube domains.
The celebrated studies \cites{Arthur_2013_book,KMSW,Mok_2015}, lead us to speculate that some of these lifting constructions are part of Arthur's multiplicity formula (e.g. ~\cites{2019_Chenevier_Lannes,2007_Ralf_classical_Saito-Kurokawa}).
It describes the representation-theoretic structure underlying modular forms and ensures the existence of cusp forms.
In this paper, we construct suitable modular forms via Arthur’s theory to enable the computation of birational invariants.

By using the formula, one can expect to be able to construct modular forms on a broader class of modular varieties, even when a lifting construction is not known. However, determining the level of the constructed modular forms is generally technically challenging.
In our context, this amounts to a problem of determining the global $A$-parameter corresponding to a cusp form of weight $n$ with level $\Gamma_L$.
To address this, we reduce the problem to finding a suitable newform as described in Theorem \ref{thm:normalized_newform} and show its existence through the Petersson trace formula.
When the archimedean component is a discrete series, the existence and enumeration of automorphic representations has been studied in several literature (e.g.~dimension formulas of Siegel modular forms \cite{2018_wakatsuki_dim_formula_general} and the limit multiplicity formulas \cites{2012_SW_Shin_automorphic_plancherel_density, 2019_shin_simon}).
Nevertheless, no previous work appears to have established the existence of cuspidal automorphic representations with a prescribed level. In particular, nothing is known for those associated with limits of discrete series, which is exactly our setting. 

To the best of our knowledge, this paper presents the first application of the theory of automorphic representations to the birational classification in algebraic geometry, specifically for modular varieties.

\subsection*{Acknowledgment}
The authors are grateful to Sug Woo Shin, Nils Scheithauer, and Shouhei Ma for their valuable and insightful comments.
We also would like to thank Asbjørn Nordentoft for letting us know the proof strategy of Theorem \ref{thm:normalized_newform} and 
Matthew Watson for our discussion on the singularities.
S.H. and Y.M. would like to thank Tohoku University for their hospitality.
S.H.~is partially supported by JSPS KAKENHI Grant Number 23K12965.
Y.M. is partially supported by the Alexander von Humboldt Foundation through a Humboldt research fellowship and by Deutsche Forschungsgemeinschaft (DFG, German Research Foundation) through the Collaborative Research Centre TRR 326 \textit{Geometry and Arithmetic of Uniformized Structures}, project number 444845124.

%\subsection*{Statements and Declarations}
%The authors declare that they have no competing interests.

\section{Strategy of the proof}
\label{section:Strategy of the proof}

\subsection{Obstruction spaces}
\label{subsec:obstruction spaces}
Let us consider the following general setting.
Let $\mathcal{D}$ be a Hermitian symmetric domain acted on by an arithmetic subgroup $\Gamma$. 
A modular variety is defined as $\Gamma\backslash\mathcal{D}$.
We denote by $\overline{\Gamma\backslash\mathcal{D}}$ a toroidal compactification of $\Gamma\backslash\mathcal{D}$ with respect to a fixed choice of a fan.
By Hirzebruch's proportionality principle \cite{mumford1977hirzebruch}, there exists a  ($\Q$-)automorphic line bundle of weight one $\L$ on $\overline{\Gamma\backslash\mathcal{D}}$ such that the canonical bundle of $K_{\overline{\Gamma\backslash\mathcal{D}}}$ is given by 
\begin{align}
\label{intoro:K}
    K_{\overline{\Gamma\backslash\mathcal{D}}}\sim_{\Q}c\L-\sum_{b}\frac{b-1}{b}\overline{B_b}-T
\end{align}
in $\Pic(\overline{\Gamma\backslash\mathcal{D}})\otimes_{\Z}\Q$.
Here, $B_b$ denotes the union of the branch divisors of the uniformization map $\mathcal{D}\to \Gamma\backslash\mathcal{D}$ with branch index $b$, and $T$ denotes the boundary, divisorial component of $(\overline{\Gamma\backslash\mathcal{D}})\smallsetminus (\Gamma\backslash\mathcal{D})$, with coefficient 1.
We denote by $\overline{B_b}$ the closure of $B_b$ in $\overline{\Gamma\backslash\mathcal{D}}$.
\begin{rem}
\begin{enumerate}
    \item    The quantity $c$ is called the \textit{canonical weight}, which refers to the minimum weight of the modular forms arising from the discrete series representation.
   For the unitary group $\U(1,n)$, this value is $n+1$.
   \item In this paper, the term \emph{weight} refers to the \emph{arithmetic weight}, which corresponds to the weight of automorphic forms. Note that the \emph{weight} mentioned in \cite{ash2010smooth} is the \emph{geometric weight}, which is $c$ times the arithmetic weight used here.
 \end{enumerate}
\end{rem}
In order to compute the Kodaira dimension of $\Gamma\backslash\mathcal{D}$, it is necessary to estimate the dimension of the space $H^0(\overline{\Gamma\backslash\mathcal{D}}, K^{\otimes d}_{\overline{\Gamma\backslash\mathcal{D}}})$, known as the pluricanonical genus, for sufficiently large $d$.
This space coincides with the space of modular forms of weight $cd$, which vanish on ramification divisors and the boundaries with suitable multiplicities. 
Since it is generally challenging to control the divisors of modular forms and to analyze the dimension of the associated space, a common strategy in previous works \cites{kondo1993kodaira,kondo1999kodaira, Gritsenko2007kodaira,ma2018kodaira} has been to reduce the problem to the study of three obstruction spaces.
The expression (\ref{intoro:K}) for the canonical bundle of $\overline{\Gamma\backslash\mathcal{D}}$ can be rewritten as 
\begin{align*}
    K_{\overline{\Gamma\backslash\mathcal{D}}}\sim_{\Q}\M+\left\{(c-1)\L-T\right\},
\end{align*}
where $\M$ is a $\Q$-line bundle defined by 
\[
\M\defeq \L-\sum_{b}\frac{b-1}{b}\overline{B_b}.
\]
Then it suffices to show that the following conditions hold to prove that $\Gamma\backslash\mathcal{D}$ is of general type.
\begin{prob}
\label{prob:obstructions}
For the geometry of $\Gamma\backslash\mathcal{D}$, we consider the following problems.
    \begin{enumerate}
    \item[(A)]\ (Cusp\ obstruction)\ $(c-1)\L-T$ is effective, that is, there exists a nontrivial cusp form of weight $c-1$ with respect to $\Gamma$.
    \item[(B)]\ (Reflective obstruction)\ $\M$ is big.
    \item[(C)]\ (Elliptic\ obstruction)\ $\Gamma\backslash\mathcal{D}$\ has at worst canonical singularities.
\end{enumerate}
\end{prob}

\begin{thm}
\label{thm:obstructions general type}
If Problems \ref{prob:obstructions} (A), (B) and (C) hold, then $\Gamma\backslash\mathcal{D}$ is of general type.
\end{thm}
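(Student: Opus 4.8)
The plan is to show that $K_{\overline{\Gamma\backslash\mathcal{D}}}$ is big by exhibiting, for $d$ large, enough sections of its $d$-th tensor power. We start from the decomposition
\[
K_{\overline{\Gamma\backslash\mathcal{D}}}\sim_{\Q}\M+\bigl\{(c-1)\L-T\bigr\}
\]
recorded above, and we shall treat the two summands separately. For the bracketed term, condition (A) produces a nontrivial cusp form $F$ of weight $c-1$ with respect to $\Gamma$; since a cusp form vanishes along the boundary, $F$ defines a nonzero global section of $(c-1)\L-T$, so this $\Q$-line bundle is effective. For the first summand, condition (B) says $\M$ is big. Big plus effective is big, so $K_{\overline{\Gamma\backslash\mathcal{D}}}$ is big as a $\Q$-Cartier divisor on the normal projective variety $\overline{\Gamma\backslash\mathcal{D}}$.

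The remaining point is that bigness of the canonical class of a compactification does not by itself compute the Kodaira dimension of the open variety $\Gamma\backslash\mathcal{D}$: one must control the discrepancies introduced by the compactification and by the quotient singularities. This is exactly what condition (C) is for. Concretely, the Kodaira dimension of $\Gamma\backslash\mathcal{D}$ equals the Iitaka dimension of $K_{\overline{\Gamma\backslash\mathcal{D}}}+\Delta$ for a suitable boundary, and the standard fact (cf.\ \cite[Section IV]{ash2010smooth}, and the analogous arguments in \cites{kondo1993kodaira,ma2018kodaira}) is that pluricanonical forms on $\Gamma\backslash\mathcal{D}$ extend to a smooth model of $\overline{\Gamma\backslash\mathcal{D}}$ provided the singularities of $\Gamma\backslash\mathcal{D}$ — which are the finite quotient (``elliptic'') singularities coming from torsion elements of $\Gamma$, together with the toroidal boundary — are at worst canonical. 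Under (C) this holds at the interior singular points; the toroidal boundary contributes no further obstruction because it has already been subtracted off in the term $T$, i.e.\ the branch and boundary corrections in \eqref{intoro:K} are accounted for in $\M$ and in $(c-1)\L-T$. Hence a resolution $\pi:Y\to\overline{\Gamma\backslash\mathcal{D}}$ satisfies $K_Y\geq \pi^{*}K_{\overline{\Gamma\backslash\mathcal{D}}}$ modulo effective exceptional and boundary divisors, so $K_Y$ is big, i.e.\ $\kappa(\Gamma\backslash\mathcal{D})=\dim\mathcal{D}$.

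I expect the only real subtlety to be the bookkeeping in the last step: matching the three obstruction conditions precisely against the discrepancy computation, so that one genuinely obtains sections of $K_Y^{\otimes d}$ rather than of a twist of it. In particular one must check that the linear combination $\M + \{(c-1)\L - T\}$ has no hidden negative contribution along the branch divisors $\overline{B_b}$ after pulling back to $Y$ — this is where the factors $\frac{b-1}{b}$ and the canonical-singularity hypothesis interact — and that the cusp form supplied by (A) really vanishes to the order $1$ along every component of $T$ dictated by \eqref{intoro:K}. Once this ledger balances, bigness of $\M$ (B) and effectivity of $(c-1)\L-T$ (A) combine with the canonical-singularity input (C) to give the claim; the argument is essentially that of \cite{ma2018kodaira} transposed to the ball-quotient setting, the point of isolating Problem \ref{prob:obstructions} being that (A), (B), (C) can then be verified independently in the later sections.
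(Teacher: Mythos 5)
Your proof follows essentially the same route as the paper's: decompose $K_{\overline{\Gamma\backslash\mathcal{D}}}\sim_\Q\M+\{(c-1)\L-T\}$, use (A) for effectivity of $(c-1)\L-T$ and (B) for bigness of $\M$ to conclude $K_{\overline{\Gamma\backslash\mathcal{D}}}$ is big, then use (C) to get nonnegativity of all discrepancies of a resolution $f:Y\to\overline{\Gamma\backslash\mathcal{D}}$, so that sections of $K_{\overline{\Gamma\backslash\mathcal{D}}}^{\otimes d}$ lift to $K_Y^{\otimes d}$ and $K_Y$ is big. The bookkeeping worries in your closing paragraph are, in the paper's treatment, already absorbed into the precise statements of (A), (B), (C), and the paper's proof is exactly as terse as your first two paragraphs.
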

\begin{proof}
See Section \ref{section:Completion of the proof of Corollary} for the proof in the case $\U(1,n)$.
The other cases have similar proofs.
\end{proof}

Now, we return to the case of the unitary groups $\U(1,n)$.
Problem \ref{prob:obstructions} (C) was resolved in \cite[Theorem 1]{behrens2012singularities}, while Problem \ref{prob:obstructions} (B) was partially addressed in \cite[Theorem 1.1, Corollary 1.2]{maeda2024reflective}, where the second author imposed certain assumptions on the arithmetic subgroups.
In the present paper, we provide solutions to Problems \ref{prob:obstructions} (A) and (B).

\begin{rem}
    Here let us recall the previous studies in this context for the case of orthogonal groups $\O^+(2,g)$.
    Gritsenko, Hulek, and Sankaran \cite{Gritsenko2007kodaira} first showed that the orthogonal modular varieties have canonical singularities when $g\geq 9$, hereby resolving Problem \ref{prob:obstructions} (C).
As an application of their work, by considering a quasi-pullback of the Borcherds form, they proved that the moduli space of quasi-polarized K3 surfaces is of general type if the polarization degree exceeds $61$.
Subsequently, Ma \cite{ma2018kodaira} resolved the remaining issues (Problems \ref{prob:obstructions} (A) and (B)) building on the foundational work of  \cites{Gritsenko2007Hirzebruchvolume,Gritsenko2008Hirzebruchproportionality}.
\end{rem}

The rest of this section is devoted to presenting our solution to Problem \ref{prob:obstructions}, which plays a central role in the proof of Theorem \ref{mainthm:finiteness_new}.

\subsection{Estimation of the obstruction spaces}
\label{subsection:Estimation of the obstruction spaces}

We begin by establishing the existence of a cusp form of weight $n$ on $\B^n$.
Let $S_k(\Gamma_L)$ be the $\C$-vector space of the cusp forms of weight $k$ on $\B^n$ with respect to $\Gamma_L$; see Subsection \ref{subsection: Modular forms on complex balls} for the definition of modular forms.
Impose on $n$ as the condition
     \begin{align}
   n > 
\begin{cases}
    3 & \text{if}\ D =19\ \text{or}\ D > 30;\\
    7 & \text{if}\ D = 11, 15, 23;\\
    11 & \text{if}\ D = 7.
\end{cases} 
    \label{ineq:condition on n}
    \tag{$\star$}
\end{align}   
The following is a solution to Problem \ref{prob:obstructions} (A).
\begin{thm}
\label{mainthm:existence of low slope automorphic form}
 If $n$ satisfies the inequality \eqref{ineq:condition on n}, then there exists a nonzero cusp form of even weight $n$ with respect to $\Gamma_L$.
 In other words, in this range for $(n,D)$, we have $\dim(S_n(\Gamma_L)) >0$.
\end{thm}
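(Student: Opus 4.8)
The plan is to deduce the existence of the cusp form from that of a cuspidal automorphic representation of the unitary group having a prescribed archimedean type and full level, and to produce the latter by choosing an explicit global $A$-parameter and invoking Arthur's multiplicity formula, in the form established for unitary groups by Mok \cite{Mok_2015} and by Kaletha--M\'{\i}nguez--Shin--White \cite{KMSW} (after \cite{Arthur_2013_book}). Concretely, let $G=\U(V)$ be the $\Q$-algebraic group with $G(\R)\cong\U(1,n)$ and $G(\Q_p)\supset K_p\defeq\U(L\otimes\Z_p)$ for each finite $p$, so that $\Gamma_L=G(\Q)\cap\prod_p K_p$. By the usual dictionary between holomorphic modular forms on $\B^n$ and automorphic representations of $G$, a nonzero element of $S_n(\Gamma_L)$ amounts to a cuspidal automorphic representation $\pi\cong\pi_\infty\otimes\bigotimes_p\pi_p$ of $G(\mathbb{A}_\Q)$ with $\pi_p^{K_p}\neq 0$ for every finite $p$ and with $\pi_\infty$ the holomorphic limit of discrete series of weight $n$ --- a genuine \emph{limit}, not a discrete series, because $n=c-1$ lies one below the canonical weight $c=n+1$. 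It therefore suffices to exhibit such a $\pi$.

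Since $G$ is an inner form over $\Q$ of the quasi-split unitary group $\U^{*}_{n+1}$ attached to $E/\Q$, its discrete automorphic spectrum is classified by conjugate-self-dual global $A$-parameters, and I would use the \emph{generic} (hence tempered) elliptic parameter
\[
\psi \;=\; \mu\ \boxplus\ \eta_1\ \boxplus\ \cdots\ \boxplus\ \eta_{n-1},
\]
in which $\mu$ is a conjugate-self-dual twist of the base change to $E$ of the automorphic representation attached to a non-CM classical elliptic newform $g$ of \emph{odd} weight $n-1$, level $D$ and nebentypus the quadratic character modulo $D$, and $\eta_1,\dots,\eta_{n-1}$ are pairwise distinct conjugate-self-dual Hecke characters of $\mathbb{A}_E^\times/E^\times$. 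The archimedean data are chosen so that $\phi_{\psi,\infty}$ is precisely the parameter whose $\U(1,n)(\R)$-packet contains the holomorphic limit of discrete series of weight $n$; matching the $n+1$ infinitesimal-character exponents forces two of them to coincide --- the wall condition responsible for the word \emph{limit} --- and pins the weight of $g$ to $n-1$, which is odd because $n$ is even, forcing the odd nebentypus. The finite data are chosen unramified outside $D$ (so the level of $g$ is supported at $D$, which is where primality of $D$ enters) and, at the prime over $D$ (ramified in $E$), compatible with the local lattice. For $\psi$ to be a bona fide elliptic parameter its summands must be distinct and of the correct parity: distinctness of the $\eta_i$ follows from choosing distinct archimedean types, cuspidality over $E$ of the two-dimensional block $\mu$ is exactly the non-CM hypothesis on $g$ (otherwise its base change would split), and the parity is arranged by a suitable mix of conjugate-orthogonal and conjugate-symplectic characters.

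Now apply the multiplicity formula. As $\psi$ is generic, the sign character $\varepsilon_\psi$ on the global component group $\mathcal{S}_\psi$ is trivial, so a representation $\pi=\bigotimes_v\pi_v$ with each $\pi_v$ in the local packet $\Pi_{\psi_v}(G)$ occurs in the cuspidal spectrum (cuspidal, not merely residual, since $\psi$ is tempered) if and only if $\prod_v\langle\,\cdot\,,\pi_v\rangle$ is trivial on $\mathcal{S}_\psi$. I would take $\pi_\infty$ to be the holomorphic limit-of-discrete-series member at $\infty$, $\pi_p$ the $K_p$-spherical member at each $p\nmid D$ (which pairs trivially), and $\pi_p$ at $p\mid D$ a member of the local packet with $\pi_p^{K_p}\neq 0$. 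The multiplicity condition then collapses to a single identity of characters of $\mathcal{S}_\psi$: the archimedean sign character of the holomorphic limit of discrete series times the local character at the prime over $D$ must be trivial. Ensuring this is exactly what it means for $g$ to be \emph{suitable} --- the precise requirement being Conjecture \ref{conj_normalized_newform} --- and for $D$ prime it is met, since a non-CM newform of odd weight $n-1$, level $D$, quadratic nebentypus, and the requisite local behavior at $D$ exists whenever $(n,D)$ obeys \eqref{ineq:condition on n}; this is proved in Subsection \ref{subsec:Non-CM elliptic modular forms of odd weight} by the Petersson trace formula, showing that a suitable weighted average of Hecke eigenvalues is positive in that range, the CM contribution being bounded and there being no oldforms because there are no odd-weight forms of level one. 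Running the dictionary of the first step backwards on this $\pi$ produces the desired nonzero cusp form, so $\dim S_n(\Gamma_L)>0$.

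I expect the crux to be the local matching in the last step. On the archimedean side one must isolate, among the members of the $A$-packet for $\U(1,n)(\R)$, the holomorphic \emph{limit} of discrete series and compute its character on $\mathcal{S}_\psi$; because limits of discrete series sit on a wall, the relevant Shelstad/Adams--Johnson parametrization is more delicate than for genuine discrete series. On the non-archimedean side, at the prime dividing $D$ the group $G$ is ramified, the level subgroup $K_p$ need not be hyperspecial, and one must exhibit a member of the local packet with a nonzero $K_p$-fixed vector realizing the sign dictated by the multiplicity formula --- which is precisely why the newform $g$ must be chosen with care, and why the hypotheses that $D$ be prime and that $(n,D)$ satisfy \eqref{ineq:condition on n} are needed. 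This interaction between the archimedean and the ramified-finite local packets is, I anticipate, the main technical obstacle.
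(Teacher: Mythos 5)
Your framework is the right one --- Arthur's multiplicity formula for unitary groups, the dictionary between holomorphic modular forms and automorphic representations, the identification of the archimedean member as the holomorphic \emph{limit} of discrete series of weight $n$, the role of the Petersson trace formula in producing the auxiliary newform --- and these all appear in the paper. However, the global $A$-parameter you propose,
\[
\psi \;=\; \mu\,\boxplus\,\eta_1\,\boxplus\cdots\boxplus\,\eta_{n-1},
\]
with one $\GL_2$-block $\mu$ and $n-1$ one-dimensional conjugate-self-dual blocks $\eta_i$, cannot be made to satisfy all the required local conditions, and this is where the argument breaks.

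The obstruction sits at the ramified prime $v\mid D$. For every member of the localized packet $\Pi(\psi_v)$ to have a chance of possessing a $K_v$-fixed vector for the relevant maximal compact subgroup, the paper needs the local component group $\calS^+_{\psi_v}$ to be trivial; otherwise one runs into exactly the phenomenon illustrated in Lemma \ref{lem_U_3_obstruction}, where different members of the packet are spherical for \emph{different}, non-conjugate, maximal compacts, and one cannot simultaneously realize the sign forced by the multiplicity formula and the spherical vector for a given lattice stabilizer. Now your $\eta_i$ must be unramified at every finite place (else $\psi_w$ is ramified at some $w$ and the local packet has no $K_w$-spherical member at all), so at $v\mid D$ each $\eta_{i,v}$ is an unramified conjugate-self-dual character of $E_v^\times$. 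By Lemma \ref{lem_conj_self_dual_char}(3) there are exactly \emph{two} such characters, both of sign $+1$: the trivial one and the nontrivial unramified quadratic one. But the archimedean infinitesimal-character matching (Lemma \ref{A_packet_hol_LDS}(1-A)) forces the $n-1$ archimedean exponents of the $\eta_i$ to fill $n-1$ of $n+1$ consecutive slots, which inevitably forces the set $\{b_i\}$ of exponents to contain both parities; when one writes $\eta_i=\tau^{b_i}$ and localizes, the $\eta_{i,v}$ with $b_i$ even land on $\mathbf{1}$ and those with $b_i$ odd land on the nontrivial quadratic character. You therefore get two distinct sign-$+1$ constituents at $v\mid D$ with positive multiplicities, so $\calS_{\psi_v}$ has at least two generators and $\calS^+_{\psi_v}$ is nontrivial --- precisely the situation the construction must avoid.

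This is why the paper's Table \ref{table:global A parameter} does \emph{not} use $n-1$ one-dimensional blocks. It uses at most two of them (e.g.\ $\tau^n$, and sometimes $\mathbf{1}$ or $\tau^{n-6}$, all with \emph{even} exponent, so they localize to the trivial character at $v\mid D$), and fills the remaining $\sim n/2$ slots with \emph{two-dimensional} blocks $\pi_k^{\BC}\otimes\tau^b$ built from a newform $\pi_k$ of small fixed weight $k\in\{3,5,7\}$ depending on $D$. Because of the Fourier-coefficient condition $a_{\pi_k}(D)\neq\pm D^{(k-1)/2}$ (Conjecture \ref{conj_normalized_newform} / Proposition \ref{prop:normalized newform}), each such block localizes at $v\mid D$ to a pair $\sigma\oplus\sigma^{c,\vee}$ with $\sigma$ \emph{non}-quadratic, contributing nothing to $\calS_{\psi_v}$. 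This keeps $\calS_{\psi_v}$ of order two and $\calS^+_{\psi_v}$ trivial (Lemmas \ref{lem:L-parameter of BC}, \ref{lemma_nonarch_S_gp}), and the inequality \eqref{ineq:condition on n} is exactly the condition guaranteeing the required small-weight non-CM newform exists for the given $D$. Your proposal to use a single $\GL_2$-block from a weight-$(n-1)$ newform, rather than many $\GL_2$-blocks from a fixed small-weight newform, is a symptom of the same issue.

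Two smaller points. First, for a generic parameter of $\U(n+1)$ with $n$ even, every constituent must be conjugate-self-dual with sign $(-1)^{N-1}=(-1)^n=+1$, so your remark about arranging parity with ``a suitable mix of conjugate-orthogonal and conjugate-symplectic characters'' is not available; all the $\eta_i$ are forced to be conjugate-orthogonal. Second, the correct archimedean input is Lemma \ref{A_packet_hol_LDS}, a consequence of \cite{Horinaga_LW_unitary} and M{\oe}glin--Renard \cite{MR_consequence}, rather than a direct Adams--Johnson computation; in particular the character of the holomorphic limit of discrete series on $\calS_{\psi_\infty}$, which enters the global sign condition, is computed from Lemma \ref{A_packet_hol_LDS}(4) and Remark \ref{rem:Whittaker datum}.
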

This theorem is established using techniques from the theory of automorphic representations.
See Subsection \ref{subsection:Outline of proof: existence of cusp forms} for an overview of the theory of automorphic representations and the key ideas underlying the proof.

By refining the computation in \cite{maeda2024reflective} and using the extended Prasad's volume formula in \cite{maeda2025nonparahoric}, we also resolve Problem \ref{prob:obstructions} (B) without imposing any assumptions on the Hermitian lattices, provided that either of the following conditions holds:
\begin{align}
\tag{$\star\star$}
n>
    \begin{cases}
    3 & \text{if}\  D> 80267;\\
    13 & \text{if}\  D > 2557;\\
    207& \text{if}\  D > 6.
\end{cases}
\end{align}
\begin{thm}
\label{mainthm:reflective obstruction without assumptions}
    Up to scaling, there are only finitely many isometry classes of Hermitian lattices $L$ of signature $(1,n)$ with $n>2$ such that $\M$ is not big.
    Furthermore, if the pair $(n,D)$ satisfies $(\star\star)$, then $\M$ is always big.
\end{thm}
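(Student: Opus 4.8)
The plan is to reduce the bigness of $\M = \L - \sum_b \frac{b-1}{b}\overline{B_b}$ to an explicit inequality comparing the "volume" of $\L$ against the contribution of the ramification divisors, and then bound the right-hand side using volume formulas. First I would recall that, since $\L$ is big and nef (it is the automorphic line bundle), $\M$ is big as soon as one can write $\M \sim_\Q \epsilon \L + (\text{effective})$ for some $\epsilon>0$; equivalently, as in Ma's treatment of the orthogonal case \cite{ma2018kodaira} and in \cite{maeda2024reflective}, it suffices to exhibit a single effective $\Q$-divisor in the class of some positive multiple of $\M$, which by Hirzebruch proportionality reduces to a numerical comparison of top self-intersection numbers. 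Concretely, one needs
\[
\L^n \;>\; \sum_b \frac{b-1}{b}\,\big(\overline{B_b}\cdot \L^{n-1}\big) \cdot (\text{combinatorial factor}),
\]
or rather the refined low-weight cusp-form criterion of \cite{maeda2024reflective}: $\M$ is big if there is a cusp form of small enough slope, whose existence in turn follows once the volume of the branch locus is dominated by $\vol_{\mathrm{HM}}(\mathcal{F}_L)$.

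The key steps, in order. (1) Recall from \cite{maeda2024reflective} the reflective-obstruction criterion: $\M$ is big provided an inequality of the shape $\sum_{\text{mirrors } M} \vol_{\mathrm{HM}}(M) < \kappa_n \cdot \vol_{\mathrm{HM}}(\mathcal{F}_L)$ holds, where the sum runs over the reflective (branch) divisors, each itself a ball quotient of dimension $n-1$ attached to a sublattice, and $\kappa_n$ is an explicit dimensional constant. (2) The branch divisors come from complex reflections in $\Gamma_L$; the order $b$ of such a reflection is bounded (it divides the order of a root of unity in $\OO_E$, hence $b \in \{2,3,4,6\}$ after the standard reduction), and each mirror $M$ is the image of a ball $\B^{n-1}$ for a Hermitian lattice of signature $(1,n-1)$ obtained as the orthogonal complement of a norm-one (or small-norm) vector. (3) Bound the number of such mirrors and the volume of each: here is where the extended Prasad volume formula from \cite{maeda2025nonparahoric} enters, giving $\vol_{\mathrm{HM}}$ of each mirror quotient in terms of local factors and $D$, and allowing the whole left-hand sum to be bounded by $C(n)\cdot D^{a}\cdot \vol_{\mathrm{HM}}(\mathcal{F}_L)$-type expressions. (4) Combine (1)–(3): the ratio (branch volume)/(total volume) is bounded above by an explicit function $f(n,D)$ that decays — Prasad's formula shows $\vol_{\mathrm{HM}}(\mathcal{F}_L)$ grows fast in $n$ via products of values of $\zeta$ and $L$-functions, while the mirror contribution grows one dimension slower. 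Showing $f(n,D) < \kappa_n$ for all but finitely many isometry classes gives the first assertion; tracking the constants and solving $f(n,D)<\kappa_n$ explicitly yields the three regimes in $(\star\star)$, the worst case $n>207$ corresponding to small $D$ where the $L$-function Euler factors contribute least.

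The main obstacle I expect is step (3): getting an \emph{effective}, lattice-independent bound on both the number of reflective mirrors and the Hermitian volume of each mirror quotient, uniformly over all isometry classes of $(L,h)$ of given signature and discriminant $-D$. The subtlety is that different genera of Hermitian lattices have different local structures at the ramified prime and at $2$, so the local volume factors in Prasad's formula vary; one must take the worst case over all these local types, which is precisely what the "extended" (non-parahoric) version of the volume formula in \cite{maeda2025nonparahoric} is designed to handle, and what forces the refinement of the computation in \cite{maeda2024reflective}. A secondary technical point is ensuring the combinatorial constant $\kappa_n$ from the bigness criterion is itself given explicitly enough (not just "for $n \gg 0$") to extract the numerical thresholds $2557$, $80267$, and $207$; this amounts to a careful bookkeeping of the toroidal boundary contribution $T$ and the coefficient $\tfrac{b-1}{b}$ across $b\in\{2,3,4,6\}$, which I would carry out following the structure already laid down in \cite{ma2018kodaira} and \cite{maeda2024reflective}.
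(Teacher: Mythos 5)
Your strategy matches the paper's: estimate the volume of the reflective (branch) locus relative to $\volHM(\mathcal{F}_L)$ via the extended Prasad volume formula \cite{maeda2025nonparahoric}, and compare against the threshold from the bigness criterion of \cite{maeda2024reflective}. Two points, though. The branch index here is always $b=2$: since $-D<-3$ we have $\OO_E^\times=\{\pm1\}$, so the list $\{2,3,4,6\}$ never enters and $\M=\L-\tfrac{1}{2}\overline{B}$ exactly. More substantively, the criterion (Proposition \ref{prop:criterion}) bounds not the bare sum of mirror volumes but a \emph{weighted} sum $V_L$ with weights $1$, $2^n$, $4^n$ according to the type of the reflective vector $\ell$ (sorted by $L/L^\ell$): the stabilizer $\Stab_{\Gamma_L}(\ell)$ relates to $\Gamma_{L^\ell}$ only up to an index as large as $4^n$. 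These exponentially large weights competing against $\kappa_n=2/(2^n-1)$ are what force $n>207$ for small $D$; your step (4) cannot ``track the constants'' correctly without them.

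The genuine gap is in the finiteness assertion. A bound $V_L < f(n,D)$, as your step (3) produces (``$C(n)\cdot D^a\cdot\volHM(\mathcal{F}_L)$-type''), yields finiteness of bad \emph{pairs} $(n,D)$, but not of bad \emph{isometry classes of lattices}: for a fixed $(n,D)$ with $f(n,D)\ge\kappa_n$ there could a priori be infinitely many genera in that signature, and your step (4) even phrases the conclusion ``$f(n,D)<\kappa_n$ for all but finitely many isometry classes,'' which is a mismatch since $f$ as you built it does not depend on $L$. The paper's bound (\ref{ineq:evaluation for VL}) carries an extra factor $\max\{1,(N(L)/4)^\epsilon\}$ in the denominator, where $N(L)\ge 1$ is a lattice-dependent quantity from the local Euler factors of Prasad's formula; it grows along any infinite family of non-scaling-equivalent lattices, so for each fixed $(n,D)$ only finitely many values of $N(L)$ can violate the bound, and each such value bounds the number of isometry classes. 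You need to keep this $L$-dependence in your upper bound rather than collapsing to a function of $(n,D)$ alone.
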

In this paper, we will show that $\M$ is big if a numerical inequality (\ref{ineq:evaluation for VL}) holds.
    This, in turn, implies that Theorem \ref{mainthm:reflective obstruction without assumptions} holds also when $n$ is odd.
    Nevertheless, for the purpose of this paper, specifically the application to the computation of the Kodaira dimension of ball quotients (Theorem  \ref{mainthm:finiteness_new}), we do not pursue this direction further.
Table \ref{tab:Resolution of obstructions} summarizes the pairs $(n, D)$ for which each obstruction space has been resolved.

\begin{table}[h]
  \caption{Resolution of obstructions}
  \label{tab:Resolution of obstructions}
  \centering
  \begin{tabular}{|c|c|c|}
    \hline
    Cusp & Reflective & Elliptic  \\
    \hline
    \makecell{
      \( n > \begin{cases}
      3 & \text{if}\ D = 19 \text{ or } D > 30; \\
      7 & \text{if}\  D = 11, 15, 23; \\
      11 & \text{if}\  D = 7.
      \end{cases} \)
    } &     \makecell{
      \( n > \begin{cases}
      3 & \text{if}\  D > 80267; \\
      13 & \text{if}\  D > 2557; \\
      207 & \text{if}\  D > 6.
      \end{cases} \)
    }& $n > 12$ and $D>3$ \\
    \hline
  \end{tabular}
\end{table}

\begin{rem}
    When $D=3$, one can also show that there exists a cusp form of weight $n$ when $n \equiv 0 \bmod 6$ and $n>17$ as $\#\mathscr{O}_E^\times = 6$.
    However, in the case of $E=\Q(\sqrt{-3})$, there exist ball quotients with non-canonical singularities, regardless of how large the dimension becomes.
\end{rem}

\subsection{Outline of the proof: the existence of cusp forms}
\label{subsection:Outline of proof: existence of cusp forms}

Let $(L,h)$ be a Hermitian lattice as above and $\bbG$ be the associated reductive group scheme over $\bbZ$.
The idea behind the proof of the existence of cusp forms involves the use of Arthur’s multiplicity formula, which is briefly explained below.

We begin by recalling that cusp forms naturally arise from cuspidal automorphic representations on the unitary group $\bbG$; see Section \ref{section_notation_AF_MF} for details.
Let $\Gamma_L \defeq \mathbb{G}(\bbZ)$ be the arithmetic subgroup and $\Gamma_{L,v}$ denotes the closure of the $v$-completion of $\Gamma_L$ in $\bbG(\bbQ_v)$.
Then the product $\Gamma_{L, \fini} \defeq \prod_{v < \infty} \Gamma_{L,v}$ defines an open compact subgroup of $\bbG(\bbA_{\fini})$.
Let $f$ be a cusp form of weight $n$ with respect to $\Gamma_L$. When $f$ generates an automorphic representation $\pi$, then $f$ corresponds to a lowest weight vector in the space of $\Gamma_{L,\fini}$-fixed vectors, denoted by $\pi^{\Gamma_{L,\fini}}$ (Lemma \ref{lemma_autom_form_vs_modular_form}).
The archimedean component of $\pi$ is a unitary lowest weight representation of weight $n$.

Since $\Gamma_{L,v}$ is contained in some maximal compact subgroup of $\bbG(\bbQ_v)$ for each finite place $v$, to construct such a cusp form $f$, it suffices to find a cuspidal automorphic representation $\pi$ such that the following two conditions hold.
\begin{itemize}
    \item The archimedean component $\pi_{\infty}$ is the unitary lowest weight representation of scalar weight $n$, which corresponds to the limit of a discrete series representation.
    \item For each finite $v$, the local component $\pi_v$ admits nontrivial $K_v$-fixed vectors for any maximal compact subgroup $K_v$ of $\bbG(\bbQ_v)$.
\end{itemize}

Arthur’s multiplicity formula (Theorem \ref{AMF}) reduces the problem of constructing such a discrete automorphic representation to that of finding a global $A$-parameter satisfying a specific product formula across all places.
Below, we briefly recall the notions of global $A$-parameters and Arthur's multiplicity formula; see Subsection \ref{subsec:Arthur's multiplicity formula} for full details.

A global $A$-parameter $\psi$ is defined to be a formal sum of the form $\bigoplus_i \pi_i \boxtimes S_{a_i}$ with several conditions.
Here $\pi_i$ is a unitary cuspidal automorphic representation of $\GL_{n_i}(\bbA_E)$ and $S_{a_i}$ denotes the irreducible algebraic representation of $\SL_2(\bbC)$ of dimension $a_i$.
Via localization, $\psi$ gives rise to the $A$-parameter $\psi_v$ at each place $v$, associated to the unitary group $\bbG(\bbQ_v)$.
Each $\psi_v$ yields a finite set $\Pi(\psi_v)$, consisting of unitary representations of $\bbG(\bbQ_v)$, known as $A$-packet.
The component group $\calS_{\psi_v}$, constructed from $\psi_v$, is an elementary two-group and the character group $\Irr(\calS_{\psi_v})$ is equipped with the injective map $\Pi(\psi_v) \rightarrow \Irr(\calS_{\psi_v})$, which depends only on the choice of a Whittaker datum.
We denote by $\pi_{(\psi_v, \eta_v)}$ the representation in $\Pi(\psi_v)$, which  corresponds to a character $\eta_v \in \Irr(\calS_{\psi_v})$.
Note that for an unramified place $v$, the unramified representation corresponds to the trivial character of $\calS_{\psi_v}$.
Suppose that for almost all places $v$, the representation $\pi_{(\psi_v, \eta_v)}$ is unramified, that is $\eta_v$ be the trivial character.
We then have a restricted tensor product representation $\bigotimes'_v \pi_{(\psi_v, \eta_v)}$, which is a representation of $\bbG(\bbA)$.
Arthur's multiplicity formula asserts that the representation $\bigotimes'_v \pi_{(\psi_v, \eta_v)}$ is a discrete automorphic representation of $\mathbb{G}(\mathbb{A})$ if and only if the global sign condition $\prod_v (\eta_v \circ \Delta_v) = \vep_\psi$ holds.
Here, $\psi$ defines a component group $\calS_\psi$ and the character $\vep_\psi$ of $\calS_\psi$, and for each place $v$, there is a canonical map $\Delta_v \colon \calS_{\psi} \rightarrow \calS_{\psi_v}$.
It is worth noting that the character $\vep_\psi$ is trivial if $a_i = 1$ for any $i$.

For our purposes, it suffices to construct a global $A$-parameter $\psi$ such that
\begin{itemize}
    \item for the infinite place $\infty$, the $A$-packet $\Pi(\psi_\infty)$ contains the unitary lowest weight representation $\pi_{(\psi_\infty, \eta_\infty)}$ of scalar weight $n$, 
    \item for each finite place $v$ and any maximal compact subgroup $K_v$ of $\bbG(\bbQ_v)$, the $A$-packet $\Pi(\psi_v)$ contains a representation $\pi_{(\psi_v, \eta_v)}$ admitting a nontrivial $K_v$-invariant vector, and 
    \item the global sign condition $\prod_v (\eta_v \circ \Delta_v) = \vep_\psi$ holds.
\end{itemize}
In Section \ref{section:Proof of existence of cuspform}, we construct a global $A$-parameter $\psi_n$, defined in Table \ref{table:global A parameter}, together with a collection of characters $\{\eta_v\}_v$ so that the corresponding representations $\pi_{(\psi_v,\eta_v)}$ fulfill the above conditions (Proposition \ref{prop:eta and psi satisfy the appropriate properties}).
This construction ensures that for each finite place $v$ the representation $\pi_{(\psi_{n,v}, \eta_v)}$ admits a nontrivial invariant vector for any maximal compact subgroup $K_v$ of $\bbG(\bbQ_v)$ and $\eta_\infty \circ \Delta_\infty = \mathbf{1}$. 
By the preceding discussion, Arthur’s multiplicity formula implies that the global representation  $\pi = \bigotimes'_v \pi_{(\psi_v, \eta_v)}$ is automorphic.
The cuspidality of $\pi$ follows from Lemma \ref{lemma_Wallach_const_term} since $\pi_{(\psi_\infty, \eta_\infty)}$ is tempered.
Therefore, we obtain a full-level cusp form of weight $n$.

\begin{rem}
\label{rem: the reason why n is even}
    The assumption that $n$ is even in the main theorem stems from technical constraints in the construction of cusp forms. Specifically, there are at least two main issues.
    First, there is the problem of the cuspidality of automorphic representations.
    Since the group $\Gamma_L$ contains $-1$ when $n$ is odd, there are no modular forms of weight $n$ with respect to $\Gamma_L$.
    This forces us to consider modular forms of weight less than $n$, while automorphic representations generated by such square-integrable modular forms are, in general, not cuspidal.
    Second, there are complications arising from the structure of unitary groups.
    In the case where $n$ is odd, the local groups $\bbG(\bbQ_v)$ are not necessarily quasi-split at all places $v$ in general.
    This makes it more difficult to ensure that the local representation $\pi_v$ has a nontrivial $K_v$-invariant vector for a finite place $v$.
    Moreover, for even $D$, one may expect a similar construction to work; however, determining the level of the resulting cusp form in that case requires a refined analysis of special maximal compact subgroups at primes above $2$, which goes beyond the scope of the present paper.
\end{rem}

\subsection{Organization of the present paper}
The proof of Theorem \ref{mainthm:existence of low slope automorphic form} is presented in Sections \ref{section_unitary_groups} through \ref{section:Proof of existence of cuspform}.
The discussion begins with the introduction of basic terminology for Hermitian forms and unitary groups in Section \ref{section_unitary_groups}. Section \ref{section_notation_AF_MF} sets up the necessary notation for modular forms and automorphic representations. The local Langlands correspondence and Arthur’s endoscopic classification are reviewed in Section \ref{section_LLC_AC}, while the treatment of the archimedean case is given in Section \ref{section:Archimedean local A-packets}, based on the results of \cite{Horinaga_LW_unitary}. The argument is completed in Section \ref{section:Proof of existence of cuspform}, where we finalize the proof of the existence of a cusp form.

In Section \ref{section:Reflective obstructions}, we address the problem of reflective obstructions, generalizing the results in \cite{maeda2024reflective}. We employ Prasad’s volume formula \cites{Pra89,maeda2025nonparahoric} to compute the volume of unitary groups and estimate the space of modular forms on sub-ball quotients, arising as branch divisors. This leads to a solution of Problem \ref{prob:obstructions} (B).

Finally, Section \ref{section:Completion of the proof of Corollary} presents the proof of the main theorem. The theorem follows from the general strategy (Theorem \ref{thm:obstructions general type}), the existence of low-weight cusp forms discussed in Section \ref{section:Proof of existence of cuspform}, which is based on the computation of $A$-packets, and the estimation of the obstruction space associated with the branched divisors discussed in Section \ref{section:Reflective obstructions}.

In Appendix \ref{sec:Distribution of Fourier coefficients}, we provide a proof of Theorem \ref{thm:normalized_newform}, which claims the existence of certain non-CM newforms.
This essentially needs for our construction of global $A$-parameters.

\section*{Notation and conventions}
To treat global and local fields in a unified manner in the proof, let $F_0$ denote $\Q_v$ or $\Q$ and $F$ be an \'etale algebra over $F_0$.
In section \ref{section_unitary_groups}, in particular, we assume that $F$ is a separable quadratic algebra over $F_0$.
When $F_0=\Q$, we always take $F=E$, where $E = \bbQ(\sqrt{-D})$ is an imaginary quadratic field with odd discriminant $-D < -3$.
Fix an algebraic closure $\overline{F_0}$ with $F \subset \overline{F_0}$ when $F$ is a field.
This gives rise to the absolute Galois groups $\Gal(\overline{F}/F)$ and $\Gal(\overline{F_0}/F_0)$ with a natural inclusion $\Gal(\overline{F}/F) \hookrightarrow \Gal(\overline{F_0}/{F_0})$.
Let $\bbA_E$ (resp. $\bbA$) be the ring of adeles of $E$ (resp. $\bbQ$) and $\OO_E$ be the ring of integers of $E$.
For any place $v$, the subscript $v$ means the $v$-completion.
The local ring $\OO_{E_v}$ contains a unique prime ideal $\frakp_{E_v}$.
Fix a uniformizer $\varpi_v$ of $\OO_{E_v}$.
For a place $v$ of $E$ above $p$, let $\omega_{E_v/\bbQ_p}$ be the quadratic character associated with $E_v/\bbQ_p$ by the class field theory.
Let $\omega_{E/\bbQ}$ denote the quadratic Hecke character associated with $E/\bbQ$. 
We also use the same symbol $\omega_{E/\bbQ}$ to refer to the Dirichlet character associated with $E/\bbQ$.

Let $(L, h)$ be a Hermitian lattice of signature $(1,n)$ over $\OO_E$, where we assume $n>2$ is even.
It defines a unitary group $\bbG$ over $\Z$ and an arithmetic subgroup $ \Gamma_L\defeq \bbG(\Z)$.
The space $V \defeq L \otimes E$ is equipped with the structure of a Hermitian space over $E$.
Set $\bbB^n \defeq \{(z_1,\ldots,z_n) \in \bbC^n \mid |z_1|^2 + \cdots + |z_n|^2 < 1\}$.
Put $\calF_L \defeq \Gamma_L \bs\bbB^n$.
We denote by $B$ the union of the branch divisors caused by the uniformization map $\bbB^n \rightarrow \calF_L$.
Let $\L$ be the ($\bbQ$-)automorphic line bundle of weight one 
on the canonical (in this case) toroidal compactification $\overline{\calF}_L$ and $\calM \defeq \L - (1/2)\overline{B} \in \Pic(\overline{\mathcal{F}_L}) \otimes \bbQ$. 
We also introduce the notation for Hermitian spaces over $E_v$.
We denote by $W$ a Hermitian space of odd dimension $N$ over $E_v$.
Specific Hermitian forms on $W$ will be introduced as needed.

For a group $G$, we denote by $Z(G)$ the center of $G$.
When $G$ is a Lie group, let us denote by $\frakg$ the Lie algebra and $\frakg_\bbC \defeq \frakg \otimes_\bbR \bbC$ its complexification.
Let $\calU(\frakg_\bbC)$ be the universal enveloping algebra of $\frakg_\bbC$ and $\calZ(\frakg_\bbC)$ be the center of $\calU(\frakg_\bbC)$.
We denote by $\mathbf{1}_G$ the trivial representation of $G$, written simply as $\mathbf{1}$ when the group is clear from context.
Let $\pi$ be a representation of $G$ and $H$ be a subgroup of $G$.
Since $H$ acts on the representation space of $\pi$, we denote by $\pi^H$ the space of $H$-fixed vectors.
When $G$ is a Lie group or $\bbQ_v$-valued points of an algebraic group, the set $\Irr(G)$ stands for the set of equivalence classes of smooth admissible irreducible representations of $G$.

\section{Unitary groups}\label{section_unitary_groups}
In this section, we review unitary groups over local or global fields along with their basic properties necessary for formulating the local Langlands correspondence and Arthur’s classification. 
For further details and notation, see \cite{KMSW}*{Subsection 0.2, Subsection 0.3}.

\subsection{Hermitian forms over local or global fields}
Let $F_0$ be a local field $\bbQ_v$ or the field of rational numbers $\bbQ$.
Let $F$ be a separable quadratic algebra over $F_0$ equipped with the nontrivial involution $c$ fixing $F_0$.
For $x \in F$, we write $x^c$ to denote the image of $x$ under $c$.
When $F_0=\bbQ$, we set $F=E$, the fixed imaginary quadratic field.
We denote by $\omega_{F/{F_0}}$ the quadratic character of $F_0^{\times}$ associated with the extension $F/F_0$ via the class field theory.
When $F_0$ is a local field and $F$ is split over $F_0$, we fix an isomorphism $F = F_0 \times F_0$, under which the involution $c$ acts by $c(x,y) = (y,x)$.
Let $V$ be a finite-dimensional separable algebra over $F$.
We  call $h$ a $F_0$-linear map $h \colon V \times V \rightarrow F$ a \emph{Hermitian form} if it satisfies
\begin{align*}
        h(av, bw) &= a b^c h(v,w)\quad (a,b \in F,\ v,w \in V)\\
    h(w,v) &= h(v,w)^c\quad (v,w \in V).
\end{align*}
For a Hermitian form $h$ over $E$ and a place $v$ of $E$, we denote by $h_v$ the base change of $h$ to the Hermitian form over $V \otimes_E E_v$.

In the remainder of this subsection, we assume that $E_v$ is a quadratic extension of $\bbQ_v$.
Let $W$ be a Hermitian space of odd dimension $N$ over $E_v$.
If $v$ is finite, there are two isometry classes of such Hermitian spaces.
Since $N$ is odd, all associated unitary groups are quasi-split.
Fix a basis $\{v_1,\ldots,v_N\}$ of $W$.
We then define a quasi-split Hermitian form $h_{F/F_0}(N)$ on $W$ by
\begin{align*}\label{def_split_hermitian_form_h}
h_{F/F_0}(N)\left(\sum_{i=1}^N a_iv_i, \sum_{i=1}^N b_iv_i\right) \defeq 
\sum_{i=1}^N (-1)^{i-1}a_{N+1-i} b_{i}^c
\end{align*}
as in \cite{KMSW}*{Subsection 0.2.2}.
When $F_0 = \bbR$, the signature of $h_{\bbC/\bbR}(N)$ is $((N+1)/2, (N-1)/2)$ if $N \equiv 1 \bmod 4$ and $((N-1)/2, (N+1)/2)$ if $N \equiv 3 \bmod 4$.

\subsection{Quasi-split inner forms \texorpdfstring{$\U_{F/F_0}(N)$}{U{F/F0}(N)}}
We briefly recall the definition of $L$-groups following \cite[Section I]{1977_borel}.
Let $G$ be a connected reductive group over a local field $F$.
The \emph{dual group} $\widehat{G}$ is a complex reductive group determined by the root datum dual to that of $G$.
Fixing a splitting of $\widehat{G}$ (see~\cite[Corollary 2.14]{1977_Springer_reductive_groups_corvaris}), the Galois group $\Gal(\overline{F_0}/F)$ acts on $\widehat{G}$.
Then, the \emph{$L$-group} ${^LG}$ of $G$ is defined to be the semidirect product
\[
{^LG} \defeq \widehat{G} \rtimes \Gal(\overline{F_0}/F).
\]
Since we assume that $N$ is odd throughout this paper, the following explanation is restricted to that case, which suffices for our purposes.
We denote by $\U_{F/F_0}(N)$ the unitary group associated with $h_{F/F_0}(N)$; it is a quasi-split connected reductive group over $F_0$.

When $F$ is a field, we have an isomorphism
\[
\U_{F/F_0}(N)(\overline{F_0})
\isom
    \GL_N(\overline{F_0}).
\]
The Galois action $\sigma_N$ is defined by
\[
\sigma_N(g) = \theta_{N,\sigma}(g)
\]
for any $\sigma \in \Gal(\overline{F_0}/F_0), g \in \GL_N(\overline{F_0})$, where
\[
\theta_{N,\sigma}(g) \defeq 
\begin{cases}
    g &\text{if $\sigma \in \Gal(\overline{F_0}/F)$}\\
    \Ad_{J_N}({^t}g^{-1}) &\text{if $\sigma \in \Gal(\overline{F_0}/F_0) \smallsetminus \Gal(\overline{F_0}/F)$}
\end{cases}
\]
and
\[
J_N=
\begin{pmatrix}
    &&&&1\\
    &&&-1&\\
    &&\iddots&&\\
    &-1&&&&\\
    1&&&&
\end{pmatrix} \in \Mat_{N,N}(\bbQ).
\]
The dual group $\widehat{\U}_{F/F_0}(N)$ of $\U_{F/F_0}(N)$ is $\GL_N(\bbC)$, and the Langlands dual group ${^L\U_{F/F_0}(N)}$ is defined by $\GL_N(\bbC) \rtimes \Gal(\overline{F_0}/F_0)$, whose group structure is induced from $\theta_{N,\sigma}$.
Note that both the dual groups and the Langlands dual groups are independent of the choice of inner forms.
Thus, the Langlands dual groups of unitary groups are isomorphic to ${^L\U_{F/F_0}(N)}$.

The dual group $\widehat{\U}_{F/F_0}(N)$ endowed with the standard splitting $(\widehat{T}, \widehat{B}, \{\widehat{X}_\alpha\})$, where $\widehat{T}$ is the diagonal torus of $\GL_N(\bbC)$, $\widehat{B}$ is the group of upper triangular matrices, and $\{\widehat{X}_\alpha\}$ consists of the standard root vectors $E_{i,i+1}$ with $1 \leq i \leq N-1$.
Here, $E_{i,i+1}$ denotes the matrix such that the diagonal entries are one, the $(i,i+1)$-th entry is one, and other entries are zero.
This splitting is preserved under the action of $\Gal(\overline{F_0}/F_0)$ and provides the standard splitting of $\U_{F/F_0}(N)$.

When $F$ is split, the projection of $F = F_0 \times F_0$ to the first (resp.~second) coordinate induces an isomorphism $\U_{F/F_0}(N) \rightarrow \GL_N(F_0)$, to be denoted $\iota_1$ (resp.~$\iota_2$).
By the definition of Hermitian forms, $\iota_2 \iota_1^{-1}(g)$ coincides with $\Ad_{J_N}({^t}g^{-1})$.

\subsection{Pure inner forms of unitary groups}\label{subsection_pure_inner_form}
We recall the parametrization of inner forms and pure inner forms of unitary groups.
For further details, see \cite[Subsection 0.3]{KMSW}.
The pointed set $H^1(\bbQ_v, \U_{E_v/\bbQ_v}(N)_\ad)$ parametrizes  bijectively the equivalence classes of inner forms of $\U_{E_v/\bbQ_v}(N)$, where $\U_{E_v/\bbQ_v}(N)_\ad$ denotes its adjoint group of $\U_{E_v/\bbQ_v}(N)$.
When $v$ is an infinite place, one has the identification
\[H^1(\bbR, \U_{\bbC/\bbR}(N)_\ad) \isom \{(p,q) \mid p+q=N,\ p, q \geq 0\}/\frakS_2,\]
where the symmetric group $\frakS_2$ acts by swapping $p$ and $q$.
For a finite $v$, when $E_v$ is a field, the set $H^1(\bbQ_v, \U_{E_v/\bbQ_v}(N)_\ad)$ is a singleton since $N$ is odd.
When $E_v$ is isomorphic to $\Q_v\times\Q_v$, the pointed set $H^1(\bbQ_v, \U_{E_v/\bbQ_v}(N)_\ad)$ is a singleton.

We now turn to the classification of pure inner forms, parametrized by $H^1(\bbQ_v, \U_{E_v/\bbQ_v}(N))$.
This set corresponds to the isometry classes of Hermitian forms on $V$ with $\dim_{E_v} V = N$.
Kottwitz \cite[Theorem 1.2]{1986_kottwitz_elliptic_singular_term} constructed a canonical map of pointed sets
\begin{align}\label{Kottwitz_map}
\alpha_v \colon H^1(\bbQ_v, \U_{E_v/\bbQ_v}(N)) \longrightarrow 
X^*(\pi_0(Z(\widehat{\U}_{E_v/\bbQ_v(N)})^{\Gal(\overline{\Q_v}/\bbQ_v)})) \isom 
\begin{cases}
    \{\pm 1\} & \text{if $E_v$ is a field;}\\
    \{1\} &\text{if $E_v$ is split,}
\end{cases}
\end{align}
which generalizes the Tate-Nakayama isomorphism.
Here $X^*(A)$ denotes the Pontryagin dual of the indicated finite abelian group $A$.
When $v$ is an infinite place, the map $\alpha_v$ is given by $\U(p,q) \mapsto (-1)^q$. 
\footnote{Note that there is a typographical error in \cite[Section 0.3.3]{KMSW}; the expression $\lfloor N/2 \rfloor + q$ should read simply $q$ when $N$ is odd.}

For each place $v$, there exists a localization map $H^1(\bbQ, \U_{E/\bbQ}(N)) \rightarrow H^1(\bbQ_v, \U_{E_v/\bbQ_v}(N))$.
The Kottwitz map (\ref{Kottwitz_map}) then induces the local-global map
\begin{equation}\label{extended_PIF_local_global}
\coprod_v H^1(\bbQ_v, \U_{E_v/\bbQ_{v}}(N)) \xrightarrow{\bigoplus_v \alpha_v} \bigoplus_v X^*(Z(\widehat{\U}_{E_v/\bbQ_{v}(N)})^{\Gal(\overline{\bbQ_v}/\Q_v)}) \xrightarrow{\prod} X^*(Z(\widehat{\U}_{E/\bbQ}(N))^{\Gal(\overline{\Q}/\Q)}).
\end{equation}
\begin{rem}
\label{rem:local-global compatibility}
By the local-global principle of Hermitian forms, the map $H^1(\bbQ, \U_{E/\bbQ}(N)) \rightarrow \coprod_v H^1(\bbQ_v, \U_{E_v/\bbQ_v}(N))$ is injective, and its image coincides with the kernel of $\prod \circ \bigoplus_v \alpha_v$ appearing in (\ref{extended_PIF_local_global}).
This implies that the family $(h_v)_v$ belonging in $\coprod_v H^1(\bbQ_v, \U_{E_v/\bbQ_v}(N))$ arises from a Hermitian form over $\bbQ$, that is it lies in the image of this injection, if and only if $\alpha_v(h_v) = 1$ for almost all $v$ and $\prod_v \alpha_v(h_v) = 1$.
\end{rem}

Let $\N$ be a maximal unipotent subgroup of $G$ and $\lambda$ be a character of $\N$.
We say that $\lambda$ is \emph{generic} if the restriction of $\lambda$ to all the root subgroups associated with simple roots is nontrivial.
For a connected reductive group $G$, we call a pair $(\N,\lambda)$, consisting of a maximal unipotent and a generic character, as a \emph{Whittaker datum}.

Throughout this paper, we fix a Whittaker datum of $\U_{F/F_0}(N)$ as follows.
Fix a nontrivial additive character $\psi_E = \bigotimes'_v \psi_{E_v}$ of $E \bs \bbA_E$ and a place $v$ of $E$.
According to \cite[Subsection 5.3]{1999_Kottwitz-Shelstad}, the fixed standard splitting for $\U_{F/F_0}(N)$ induces a surjective homomorphism
\[
\N \rightarrow \prod_\alpha \bbG_a
\]
over $\overline{\bbQ_v}$, where $\N$ denotes the fixed maximal unipotent subgroup of $\U_{F/F_0}(N)$ and $\alpha$ runs over all simple roots.
Since each local component $\psi_{E_v}$ is nontrivial, the corresponding character of $\N$ defined above is generic.
Thus, we obtain the Whittaker datum for $\U_{F/F_0}(N)$ to be denoted $\frakw_v$.
The Whittaker datum depends on the choice of split Hermitian form $h_{E_v/\bbQ_v}(N)$.
For the effect of replacing the Whittaker datum, see Lemma \ref{lemma_dependence_choice_Whittaker_datum}.

\section{Automorphic forms and modular forms}\label{section_notation_AF_MF}
In this section, we review the definition of automorphic forms and modular forms, and explain the correspondence between them.

\subsection{Automorphic forms and representations}

Let $G$ be a connected reductive group over $\bbQ$ with the center $Z(G)$.
For each finite place $v$ of $\bbQ$, let $K_v$ denote a special maximal compact subgroup of $G(\bbQ_v)$ and $K_{\infty}$ be a maximal compact subgroup of $G(\bbR)$, which induce $K_\bbA \defeq \prod_{v} K_v$.
We say that a smooth function $\varphi$ on $G(\bbQ) \bs G(\bbA)$ is an \emph{automorphic form} if it satisfies the following properties (\cite[I.2.17]{MW}).
\begin{itemize}
    \item $\varphi$ is $K_\bbA$-finite, that is, the space $\langle k \cdot \varphi \mid k \in K_\bbA\rangle_\bbC$ is finite dimensional,
    \item $\varphi$ is $\calZ(\frakg_\bbC)$-finite, that is,  the space $\langle z \cdot \varphi \mid z \in \calZ(\frakg_\bbC)\rangle_\bbC$ is finite dimensional, and
    \item $\varphi$ is moderate growth.
\end{itemize}
Let $\calA(G)$ be the space of automorphic forms on $G(\mathbb{A})$.
For an automorphic form $\varphi$ on $G(\bbA)$, the space generated by the right translations of $\varphi$ by $G(\bbQ_\fini)$ and $\frakg_\bbC$ admits a structure of a $G(\bbQ_\fini) \times (\frakg_\bbC, K_\infty)$-module.
By \emph{automorphic representation}, we mean an irreducible subquotient as a representation in $\calA(G)$.
An automorphic representation decomposes as a restricted tensor product representation $\bigotimes'_v \pi_v$, where each local component $\pi_v$ is a smooth admissible representation of $G(\bbQ_v)$ for finite $v$ and $\pi_\infty$ is a $(\frakg_\bbC, K_\infty)$-module.

Now, let us introduce the space of square-integrable automorphic forms with unitary central characters.
For a unitary character $\omega$ of $Z(G)(\bbQ) \bs Z(G)(\bbA)$, let $\calA^2(G, \omega)$ be the space of square-integrable automorphic forms $\varphi$ on $G(\bbQ) \bs G(\bbA)$ with $\varphi(zg) = \omega(z) \varphi(g)$ for any $z \in Z(G)(\bbA)$ and $g \in G(\bbA)$.
An automorphic representation $\pi$ that appears in the decomposition of $\calA^2(G, \omega)$ is called a \emph{discrete automorphic representation}.
An automorphic form $\varphi$ is said to be \emph{cuspidal} if 
for any proper $\bbQ$-parabolic subgroup $P\subsetneq G$ with Levi decomposition $P = MN$, we have 
\[
\int_{N(\bbQ) \bs N(\bbA)} \varphi(ng) \, dn = 0\quad \text{for all}\ g\in\mathbb{G}(\bbA).
\]
We denote by $\calA_{\cusp}(G)$ the space of cusp forms on $G(\bbA)$.
Note that while every cusp form appears in $\calA^2(G, \omega)$, the converse is not true in general.
An automorphic form in $\calA^2(G, \omega)$ that is orthogonal to all of the cusp forms is called \emph{residual}, as such automorphic forms can be obtained from the residues of the Eisenstein series.
We say that a discrete automorphic representation $\pi$ is a \emph{cuspidal representation} if $\pi$ is generated by a cusp form.
There is a useful criterion for determining whether $\pi$ is cuspidal.
For the definition of tempered representations, see \cite[p.~260]{2001_Knapp}.

\begin{thm}[\cite{1984_Wallach_constant_term}]\label{lemma_Wallach_const_term}
    Let $\pi = \bigotimes_v' \pi_v$ be a discrete automorphic representation.
    Then $\pi$ is cuspidal if $\pi_\infty$ is tempered.
\end{thm}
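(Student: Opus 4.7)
The plan is to prove the contrapositive: if $\pi \subset L^2_{\mathrm{disc}}(G(\bbQ) \bs G(\bbA))$ is discrete but not cuspidal, then $\pi_\infty$ cannot be tempered. The argument would combine three classical ingredients, namely the Langlands--Mœglin--Waldspurger description of the residual spectrum, the Langlands classification of irreducible admissible representations of real reductive groups, and Casselman's criterion for temperedness.

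First I would invoke the decomposition $L^2_{\mathrm{disc}} = L^2_{\mathrm{cusp}} \oplus L^2_{\mathrm{res}}$ of the discrete spectrum, where the residual part is generated by iterated residues of Eisenstein series attached to cuspidal data on proper $\bbQ$-parabolics. Since $\pi$ is not cuspidal, it lies in $L^2_{\mathrm{res}}$ and hence occurs as a residue of an Eisenstein series $E(\,\cdot\,, \Phi, \lambda)$ induced from a cuspidal automorphic representation $\sigma$ of a proper Levi subgroup $L$ of some proper $\bbQ$-parabolic $Q = LU$, evaluated at some $\lambda_0 \in \fraka_{L, \bbC}^*$ whose real part $\operatorname{Re}(\lambda_0)$ lies in the open positive Weyl chamber $(\fraka_L^*)^+$. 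Next I would analyze the archimedean component. The residue construction realizes $\pi_\infty$ as an irreducible constituent of the parabolically induced representation $\operatorname{Ind}_{Q(\bbR)}^{G(\bbR)}(\sigma_\infty \otimes e^{\lambda_0})$. Computing the Jacquet module $J_{Q(\bbR)}(\pi_\infty)$ via the geometric lemma, one extracts an exponent whose real part is a Weyl translate of $\operatorname{Re}(\lambda_0)$, and in particular the identity contribution yields an exponent with strictly positive components. Finally, by Casselman's criterion, temperedness of $\pi_\infty$ requires every exponent of $J_{Q(\bbR)}(\pi_\infty)$ (after normalization by $\delta_Q^{1/2}$) to lie in the closed anti-dominant chamber $-\overline{(\fraka_L^*)^+}$, which contradicts the positivity of $\operatorname{Re}(\lambda_0)$.

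The main obstacle lies in the second step, specifically in showing that the exponent $\operatorname{Re}(\lambda_0)$ genuinely survives in the Jacquet module of the residual representation's archimedean component rather than being cancelled when passing to an irreducible subquotient. This is delicate because the irreducible constituents of the induced representation can in principle include tempered pieces when $\sigma_\infty$ is degenerate, so one must use the fact that the residue is nonzero \emph{precisely} because of a pole at $\lambda_0$, which forces $\pi_\infty$ to be a specific non-tempered Langlands constituent of the induced representation. Carrying out this bookkeeping rigorously, together with the careful tracking of normalization conventions (the $\delta_Q^{1/2}$-shift and the sign conventions fixing the positive chamber), constitutes the technical heart of Wallach's original argument in \cite{1984_Wallach_constant_term}.
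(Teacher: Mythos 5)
The paper does not prove this statement at all: Theorem \ref{lemma_Wallach_const_term} is simply quoted from \cite{1984_Wallach_constant_term}, whose argument proceeds directly through the theory of the constant term of a square-integrable automorphic form (comparing Langlands' square-integrability constraints on the cuspidal exponents with the asymptotics dictated by the archimedean component), and in particular needs no description of the residual spectrum. So your route — contrapositive via the residual spectrum, plus Casselman's criterion at the archimedean place — is genuinely different, but as written it has a real gap, and it is exactly the one you flag at the end.

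From ``$\pi$ is residual'' you do get that $\pi_\infty$ is an irreducible constituent (indeed a quotient, since the iterated residue of the Eisenstein map is a $G(\bbA)$-intertwiner out of the induced representation) of $\operatorname{Ind}_{Q(\bbR)}^{G(\bbR)}(\sigma_\infty\otimes e^{\lambda_0})$ with $\operatorname{Re}\lambda_0\neq 0$. But no purely local statement forces such a constituent to be non-tempered: already for $\SL_2(\bbR)$ the induced representation at $\lambda_0=\rho$ has the (tempered) discrete series among its constituents, and only its Langlands quotient is non-tempered. Your proposed repair — that the pole at $\lambda_0$ ``forces $\pi_\infty$ to be a specific non-tempered Langlands constituent'' — is not a proof: the pole is a global phenomenon (poles of the global intertwining operators, i.e.\ of ratios of $L$-functions), the residue of the identity term vanishes, so the exponent $\lambda_0$ itself need not survive in the constant term of the residue, and the claimed ``identity contribution'' to the Jacquet module of $\pi_\infty$ is precisely what has to be established. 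Moreover, to invoke uniqueness of the Langlands quotient you would need $\sigma_\infty$ tempered and $\operatorname{Re}\lambda_0$ regular dominant for $Q$; neither holds in general ($\sigma$ is only assumed cuspidal on the Levi, so $\sigma_\infty$ may be non-tempered, and iterated residues are taken at points that can lie on walls), and handling this requires rewriting $\sigma_\infty$ through its own Langlands data, induction in stages, and nontrivial chamber bookkeeping that the sketch does not supply. A smaller imprecision: Casselman's temperedness criterion puts the real parts of the normalized exponents in the closed cone spanned by the negative roots (equivalently, nonpositive pairing with the fundamental weights attached to $A_Q$), not in the anti-dominant chamber. Until the step ``the residue representation's archimedean component cannot be a tempered constituent'' is proved by a genuinely global argument, the proposal does not yet yield the theorem.
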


In the following, we consider the case $G=\U(1,n)$ and $K_{\infty} = \U(1) \times \U(n)$.

\subsection{Modular forms on complex balls}
\label{subsection: Modular forms on complex balls}
In this subsection, we recall the definition of modular forms on complex balls following \cites{97_shimura,00_Shimura}.
Let $\Mat_{i,j}(\C)$ denote the set of complex matrices of size $i\times j$.
Let $(L,h)$ be a Hermitian lattice of signature $(1,n)$ over $\OO_E$, where we regard $h \in \Mat_{n,n}(\bbC)$.
Let $\bbG$ be the reductive group scheme over $\bbZ$ associated with the pair $(L,h)$.
Let $\kappa\in\C$ and $\sigma \in \GL_{n+1}(\bbC)$ be such that
\begin{equation}\label{hermitian_form_for_def_ball}
    \kappa \sigma h {^t\overline{\sigma}} = \diag(1,\underbrace{-1,\ldots,-1}_{n}).
\end{equation}
In what follows, we identify elements in $\bbG(\bbR)$ with elements in the unitary group associated to the Hermitian form  (\ref{hermitian_form_for_def_ball}) when considering the action of $\bbG(\bbR)$ on the complex balls.

We now define the action of $\bbG(\bbR)\cong\U(1,n)$ on the complex balls $\bbB^n$.
Recall the following representation
\[
\bbB^n = \{z = (x_1,\ldots,z_n) \in \C^n \mid z \cdot {^t \overline{z}} = |z_1|^2 + \cdots + |z_n|^2 < 1\}.
\]
For $g \in \bbG(\bbR)$ and $z \in \bbB^n$, write
\[
g = 
\begin{pmatrix}
    a_g & b_g\\
    c_g & d_g
\end{pmatrix}, \qquad
a_g \in \bbC,\ b_g \in \Mat_{1,n}(\bbC),\ c_g \in \Mat_{n,1}(\bbC),\ d_g \in \Mat_{n,n}(\bbC)
\]
and
\[
\lambda(g,z) \defeq {b_g} \cdot {^t \overline{z}} + {a_g}, \qquad \mu(g,z) \defeq c_gz + d_g.
\]
Note that, for consistency with representation theory, the above function $\lambda$ is the complex conjugate of the expression appearing in \cite[(3.17)]{00_Shimura}.
Then, the action $g(z)$ is given by 
\[
g(z) = (a_gz + b_g) (c_g z + d_g)^{-1}.
\]
The stabilizer of the base point $\bfi = (0,\ldots,0) \in \bbB^n$ is the maximal compact subgroup $K_{\infty}\defeq\U(1) \times \U(n)$ of $\bbG(\bbR)$.
For a representation $\rho = \rho_1 \boxtimes \rho_2$ of $K_{\infty}$, take the holomorphic extension of $\rho$ to $\GL_1(\bbC) \times \GL_n(\bbC)$, where we denote the extension by the same symbol.
We define the slash operator $|_\rho$ by
\[
(f|_{\rho}g)(z) \defeq \rho(\lambda(g,z), \mu(g,z))^{-1}f(g(z)) = \rho({b_g} \cdot {^t \overline{z}} + {a_g}, c_gz+d_g)^{-1}f(g(z)).
\]
We now define a modular form of weight $\rho$ as follows.
Let $\Gamma$ be an arithmetic subgroup of $\bbG(\bbQ)$.
A holomorphic function $f$ is called a \emph{modular form of weight $\rho$ with respect to $\Gamma$} if $f$ satisfies the transformation low $f|_{\rho} \gamma = f$ for any $\gamma \in \Gamma$.
Note that since we assume $n>1$, the Koecher principle \cite[Proposition A.4.5]{97_shimura} implies the holomorphy of $f$ at boundaries.
Let $M_{\rho}(\Gamma)$ denote the space of modular forms of weight $\rho$ with respect to $\Gamma$.
When $\rho_1 = \det^{k_1}$ and $\rho_2 = \det^{k_2}$, the corresponding modular forms are called \emph{scalar-valued modular forms}.
In this case, we say that the modular form has \emph{scalar weight} $k\defeq k_1-k_2$. 
Note that $M_k(\Gamma) = 0$ if $k<0$.
Throughout this paper, scalar weight is also referred to as weight simply when there is no risk of confusion.
  The introduction of the factor $\lambda(g,z)$ and the representation $\rho_1$ is motivated by considerations involving the action of the center of $\mathbb{G}(\bbR)$, although it is not essential for most of our purposes.

\begin{rem}
    From a geometrical point of view, by Hirzebruch's proportionality principle \cite{mumford1977hirzebruch}, there exists a $\Q$-automorphic line bundle of weight one  $\L$ in the toroidal compactification $\overline{\mathcal{F}_L}$ such that
    \[M_k(\Gamma) = H^0(\overline{\mathcal{F}_L}, \L^{\otimes k}),\ S_k(\Gamma) = H^0(\overline{\mathcal{F}_L}, \L(-T)^{\otimes k});\]
    see also Subsection \ref{subsec:Ramification divisors}. 
\end{rem}

\subsection{Automorphic forms and modular forms}
Let $\Gamma_{\mathrm{fin}}\subset \bbG(\bbA_\fini)$ be the product of the closures of the $v$-completion of an arithmetic subgroup $\Gamma\subset\bbG(\Q)$.
By the approximation theorem \cite[Lemma 8.14(3)]{97_shimura}, there exists a finite set $\{g_1, \ldots, g_\ell\} \subset \bbG(\bbA_\fini)$ such that 
\[
\bbG(\bbA) = \bigsqcup_{i=1}^{\ell} \bbG(\bbQ) g_i \bbG(\bbR)\Gamma_{\mathrm{fin}}.
\]
We construct an automorphic form $\varphi$ from a family of functions on the complex ball $\bbB^n = \bbG(\bbR)/K_\infty$.
For simplicity, we assume that $\varphi$ has scalar weight, meaning that the right translation of $\varphi$ under $K_\infty$ generates a one-dimensional representation $\rho$.
In particular, this means that $\varphi(gk) = \rho(\lambda(k, \bfi), \mu(k ,\bfi))^{-1}\varphi(g)$ for any $k \in K_\infty$.
Let $\calA(\bbG)^{\Gamma_{\mathrm{fin}}, \rho}$ denote the space of automorphic forms $\varphi$ in $\calA(\bbG)$ that are right $\Gamma_{\mathrm{fin}}$-invariant and the right translation under $K_\infty$ generates $\rho$.
Let $f_i \in M_\rho(\Gamma_{i})$ be modular forms with respect to $\Gamma_i \defeq \bbG(\bbQ) \cap g_i\Gamma_{\mathrm{fin}}g_i^{-1} \bbG(\bbR)$.
We then obtain a function $\varphi_{\{f_i\}}$ on $\bbG(\bbQ) \bs \bbG(\bbA)$ so that
\[
\varphi_{\{f_i\}}(\gamma g_i k g_\infty) = \rho(\lambda(g_\infty, \bfi), \mu(g_\infty, \bfi))^{-1}f_i(g_\infty(\bfi)), \qquad \gamma \in \bbG(\bbQ),\ k \in \Gamma_{\mathrm{fin}},\ g_\infty \in \bbG(\bbR).
\]
This function is well-defined by the transformation properties of $\rho$ and $f_i$.
Let $\calA(\bbG)_{\hol}$ be the space of automorphic forms annihilated by $\frakp_-$, the anti-holomorphic tangent space of $\bbB^n$ at $0$, and let $\calA_{\cusp}(\bbG)_\hol$ denote the subspace of cusp forms in $\calA(\bbG)_\hol$.
We define $\calA(\bbG)_\hol^{\Gamma_{\mathrm{fin}}, \rho} \defeq \calA(\bbG)_\hol \cap \calA(\bbG)^{\Gamma_{\mathrm{fin}}, \rho}$.

\begin{lem}\label{lemma_autom_form_vs_modular_form}
    Let $\rho$ be a character of $K_\infty$.
    The map $\{f_{i}\} \mapsto \varphi_{\{f_i\}}$ induces the isomorphism
    \[
    \bigoplus_i M_\rho(\Gamma_{i}) \rightarrow \calA(\bbG)_\hol^{\Gamma_{\mathrm{fin}}, \rho}.
    \]
\end{lem}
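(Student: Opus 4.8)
The map $\{f_i\} \mapsto \varphi_{\{f_i\}}$ is manifestly linear, so the plan is to verify (i) that $\varphi_{\{f_i\}}$ genuinely lies in $\calA(\bbG)_\hol^{\Gamma_{\mathrm{fin}}, \rho}$, (ii) injectivity, and (iii) surjectivity. For (i), one first checks that the stated formula for $\varphi_{\{f_i\}}$ on the big cell $\bbG(\bbQ) g_i \Gamma_{\mathrm{fin}} \bbG(\bbR)$ is well-defined: an overlap relation $\gamma g_i k g_\infty = \gamma' g_i k' g_\infty'$ forces $g_\infty' = \delta g_\infty$ for some $\delta \in \Gamma_i$ together with matching finite-part data, and then the cocycle identity for $(\lambda,\mu)$ combined with the transformation law $f_i|_\rho\delta = f_i$ and the equivariance $f_i(\delta(z)) = \rho(\lambda(\delta,z),\mu(\delta,z)) f_i(z)$ shows the two expressions agree. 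Left $\bbG(\bbQ)$-invariance and right $\Gamma_{\mathrm{fin}}$-invariance are then built in, and right $K_\infty$-equivariance of type $\rho$ follows from the cocycle identity at $\bfi$ (using $\mu(k k', \bfi) = \mu(k, k'(\bfi))\mu(k',\bfi)$ and $k'(\bfi) = \bfi$). The $\calZ(\frakg_\bbC)$-finiteness, $K_\bbA$-finiteness, and moderate growth of $\varphi_{\{f_i\}}$ follow from holomorphy of $f_i$: holomorphy translates into annihilation by $\frakp_-$, which together with the $K_\infty$-type $\rho$ pins down the action of $\calZ(\frakg_\bbC)$ via the Harish-Chandra homomorphism (a single infinitesimal character), and moderate growth is the Koecher principle / standard boundedness of holomorphic modular forms near the cusps (using $n > 1$).

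\textbf{Injectivity} is immediate: if $\varphi_{\{f_i\}} \equiv 0$, then evaluating on $g_i g_\infty$ for $g_\infty \in \bbG(\bbR)$ and using that $\rho(\lambda,\mu)$ is nowhere zero gives $f_i(g_\infty(\bfi)) = 0$ for all $g_\infty$; since $\bbG(\bbR)$ acts transitively on $\bbB^n$, each $f_i$ vanishes identically.

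\textbf{Surjectivity} is the substantive step. Given $\varphi \in \calA(\bbG)_\hol^{\Gamma_{\mathrm{fin}}, \rho}$, define for each $i$ a function $f_i$ on $\bbB^n$ by choosing, for $z \in \bbB^n$, any $g_\infty \in \bbG(\bbR)$ with $g_\infty(\bfi) = z$ and setting $f_i(z) \defeq \rho(\lambda(g_\infty,\bfi),\mu(g_\infty,\bfi)) \, \varphi(g_i g_\infty)$. The $K_\infty$-equivariance of $\varphi$ shows this is independent of the choice of $g_\infty$, so $f_i$ is well-defined; the transformation behavior of $\varphi$ under $\bbG(\bbQ)$ and $\Gamma_{\mathrm{fin}}$ yields $f_i|_\rho\gamma = f_i$ for $\gamma \in \Gamma_i$. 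That $f_i$ is holomorphic is exactly the condition $\varphi \in \calA(\bbG)_\hol$, i.e. $\frakp_- \varphi = 0$: one identifies $\frakp_-$ with the antiholomorphic tangent directions at $\bfi$ and propagates via the $\bbG(\bbR)$-action, so annihilation by $\frakp_-$ says the smooth function $f_i$ satisfies the Cauchy–Riemann equations; the value at the cusps is controlled by Koecher. Finally one checks $\varphi_{\{f_i\}} = \varphi$ on each cell by unwinding the definitions, and these cells cover $\bbG(\bbA)$ by the chosen decomposition, so $\varphi_{\{f_i\}} = \varphi$ globally.

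\textbf{Main obstacle.} The one place requiring genuine care is the precise dictionary between the differential condition "$\varphi$ is annihilated by $\frakp_-$" and "each $f_i$ is holomorphic," including the correct appearance of the automorphy factor $(\lambda(g,z),\mu(g,z))$ and its complex conjugate as flagged in the text after \eqref{hermitian_form_for_def_ball}. Once the identification $\bbB^n \cong \bbG(\bbR)/K_\infty$ is set up compatibly with the complex structure and the holomorphic extension of $\rho$, this is a standard computation (cf. \cites{97_shimura,00_Shimura}), but it is the crux; everything else is a formal bookkeeping exercise with cocycles and double cosets.
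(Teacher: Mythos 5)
Your proof is correct and takes essentially the same route as the paper: both define the inverse map $\varphi \mapsto \{f_i\}$ via $f_i(g_\infty(\bfi)) = \rho(j(g_\infty,\bfi))\varphi(g_ig_\infty)$ and reduce the real content of the lemma to the dictionary between holomorphy of the $f_i$ and annihilation of $\varphi$ by $\frakp_-$, which the paper simply outsources to \cite[Proposition 7.3]{90_Shimura}. Your write-up spells out more of the formal bookkeeping (well-definedness on the double-coset decomposition, injectivity, $\calZ(\frakg_\bbC)$- and $K_\bbA$-finiteness, moderate growth via Koecher) that the paper leaves implicit, but there is no substantive difference in approach.
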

\begin{proof}
    Note that the functions $\{f_i\}$ are holomorphic if and only if the automorphic forms $\varphi_{\{f_i\}}$ are annihilated by $\frakp_-$ by \cite[Proposition 7.3]{90_Shimura}.
    To construct the converse map, for an automorphic form $\varphi$ in $\calA(\bbG)_\hol^{\Gamma_{\mathrm{fin}}, \rho}$, put
    \[
    f_i (g_\infty(\bfi)) = \rho(j(g_\infty, \bfi))\varphi(g_i g_\infty).
    \]
    Then, $f_i$ is a modular form in $M_\rho(\Gamma_{i})$.
\end{proof}

By Lemma \ref{lemma_autom_form_vs_modular_form}, we have the subspace $\bigoplus_i S_\rho(\Gamma_{i})$ of $\bigoplus_i M_\rho(\Gamma_{i})$ which is isomorphic to $\calA_\cusp(\bbG)_{\hol}^{\Gamma_{\mathrm{fin}}, \rho}$.
We say that a modular form in $\bigoplus_i S_\rho(\Gamma_{i})$ is a cusp form.
This definition is equivalent to the standard one by \cite[Proposition A.4.5]{97_shimura}.
If a cusp form is a Hecke eigenform, then the corresponding cuspidal representation is irreducible \cite[Theorem 3.1]{2013_Narita_Pitale_Schmidt}.
Finally, note that when $\rho$ is not a character, the same statement holds by replacing $\bigoplus_i M_\rho(\Gamma_{i})$ with $\bigoplus_i M_\rho(\Gamma_{i}) \otimes \rho^*$.

In the proof of Theorem \ref{mainthm:existence of low slope automorphic form}, we will prove that the space $\calA_\cusp(\bbG)_\hol^{\Gamma_{\mathrm{fin}}, \rho}$ is nonzero for $\rho$ corresponding to weight $n$ under certain assumptions on $(n,D)$, as an application of Arthur's multiplicity formula.

\section{The Langlands and Arthur classifications}\label{section_LLC_AC}
In this section, we review the local Langlands classification and endoscopic classification of discrete automorphic representations of unitary groups with $N=n+1$ variables, assuming that $n$ is even.

\subsection{Weil groups and parameters}\label{L-A-parameter}
In this subsection, we recall the definitions and basic properties of Weil groups and their representations, following \cites{1979_tate, 2011_Gan_Gross_Prasad}.
Throughout this section, let $F$ be a local field.
For $F = \bbR$ or $\bbC$, we define the Weil group as
\[
\mathcal{W}_{F}
\defeq
\begin{cases}
    \bbC^\times & \text{if $F = \bbC$;}\\
    \bbC^\times \sqcup (\bbC^\times \cdot j) & \text{if $F = \bbR$,}
\end{cases}
\]
with $j^2 = -1$ and $jzj^{-1} = \overline{z}$.
The real Weil group $\mathcal{W}_\bbR$ fits into the short exact sequence
\[
1 \rightarrow \bbC^\times \rightarrow \mathcal{W}_\bbR \rightarrow \Gal(\bbC/\bbR) \rightarrow 1.
\]
When $F$ is a non-archimedean field, let $F^\unr$ be the unique maximal unramified extension in the fixed algebraically closed field.
Set $I_{F} \defeq \Gal(\overline{F}/F^\unr)$, the inertia subgroup.
We define the Weil group $\mathcal{W}_{F}$ of $F$ to be the subgroup of $\Gal(\overline{F}/F)$ generated by $I_{F}$ and the geometric Frobenius $\Frob_{F}$.
The topology on $\mathcal{W}_{F}$ is such that $I_{F}$ is an open subgroup and the topology on $I_{F}$ coincides with the subspace topology induced from its inclusion in $I_{F} \subset \Gal(\overline{F}/F^\unr)$. 

The abelianized Weil group $\calW_F^{\ab}$ admits the following two commutative diagrams:
\begin{equation}\label{comm_diag_weil_gp}
    \begin{tikzcd}
        \mathcal{W}_F^\ab \arrow[r, "\sim"] \arrow[rightarrow]{d} & F^\times \arrow[d, "\mathrm{Norm}"]\\
        \mathcal{W}_{\bbQ_v}^\ab \arrow[r, "\sim"]& \bbQ_v^\times 
    \end{tikzcd}
    \qquad
    \begin{tikzcd}
        \mathcal{W}_{F}^\ab \arrow[r, "\sim"] \arrow[leftarrow, "t"']{d} & F^\times \arrow[hookleftarrow]{d}\\
        \mathcal{W}_{\bbQ_v}^\ab \arrow[r, "\sim"]& \bbQ_v^\times 
    \end{tikzcd}
\end{equation}
Here, the horizontal maps are the reciprocity homomorphisms from the local class field theory, and the leftmost map is induced from the inclusion $\mathcal{W}_F \hookrightarrow \mathcal{W}_{\bbQ_v}$.
The map $t$ is called the \emph{transfer homomorphism} defined in \cite[Section 1]{1979_tate}.
More precisely, when $F = E_v$, the transfer homomorphism $t$ is given by
\begin{equation}\label{transfer_hom}
    t(g)
    \defeq
    \begin{cases}
        c^2 & \text{if $g \not\in W_{\bbQ_v} \smallsetminus W_{E_v}$;}\\
        gcgc^{-1} & \text{if $g \in W_{E_v}$,}
    \end{cases}
\end{equation}
 for an element $c \in \mathcal{W}_{\bbQ_v} \smallsetminus \mathcal{W}_{E_v}$.
Note that the definition of $t$ is independent of the choice of $c$ since $t(g)$ lies in $\mathcal{W}_{E_v}^\ab$.

Let us define the \emph{local Langlands group}
\[
L_{F} \defeq 
\begin{cases}
    \mathcal{W}_{F} & \text{if $F$ is archimedean;}\\
    \mathcal{W}_{F} \times \SL_2(\bbC) &\text{if $F$ is non-archimedean.}
\end{cases}
\]
In the following, we define $L$-parameters for a connected reductive group $G$ defined over $F$.
Take a continuous homomorphism $\phi \colon L_{F} \rightarrow {^L}G = \widehat{G} \rtimes \Gal(\overline{F_0}/F)$.
Following \cite[Section 0.4.4]{KMSW}, we say that $\phi$ is an \emph{$L$-homomorphism} 
if the projection of $\phi|_{\calW_{F}}$ to the second factor $\Gal(\overline{F_0}/F)$ coincides with the natural map $\calW_{F} \rightarrow \Gal(\overline{F_0}/F)$.
Given two $L$-homomorphisms $\phi_1$ and $\phi_2$, we say that $\phi_1$ and $\phi_2$ are equivalent if $\phi_1$ and $\phi_2$ are conjugate under the action of $\widehat{G}$.
An \emph{$L$-parameter} for $G$ is then defined as an equivalence class of $L$-homomorphisms $\phi$ for $G$ such that the image $\Im(\phi)$ consists of semisimple elements and the restriction $\phi|_{\SL_2(\bbC)}$ is algebraic.
An $L$-parameter $\phi$ is called \emph{tempered} (resp.~\emph{unramified}) if the image $\phi(\mathcal{W}_{F})$ is bounded (resp.~$\phi$ is trivial on $I_{F}$).
When $G$ is split, an $L$-homomorphism can be regarded as a continuous homomorphism $L_F \rightarrow \widehat{G}$, since the action of $\calW_F$ on $\widehat{G}$ is trivial.

Now, consider an $L$-parameter $\phi$ for $\GL_N(\bbQ_v)$.
Since $\widehat{G} = \GL_N(\bbC)$, the homomorphism $\phi$ can be regarded as an $N$-dimensional representation of $L_F$.
We call $\phi$ \emph{simple} if it is irreducible as a representation.
In general, the $L$-parameter $\phi$ can be decomposed as a direct sum
\[
\phi = \bigoplus_{i} m_i \phi_i
\]
where each $\phi_i$ is a simple parameter and $m_i$ denotes its multiplicity.
Fix an element $c \in \mathcal{W}_{\bbQ_v} \smallsetminus \mathcal{W}_{E_v}$, and define a new representation $\phi^c(g) \defeq \phi(cgc^{-1})$, whose equivalence class is independent of the choice of $c$.
We say that $\phi$ is \emph{conjugate self-dual} if $\phi^c$ is isomorphic to the dual representation $\phi^\vee$.
Let $V_\phi$ be the representation space of $\phi$.
If $\phi$ is conjugate self-dual, then there exists a non-degenerate bilinear form $B \colon V_\phi \times V_\phi \rightarrow \bbC$ satisfying $B(\phi(g)v,\phi^c(g)w) = B(v,w)$ for all $g\in\mathcal{W}_{E_v}$ and $v,w \in V_\phi$.
We say that $\phi$ is \emph{conjugate self-dual with sign $\pm1$} if there exists such a pairing $B \colon V_\phi \times V_\phi \rightarrow \bbC$ satisfying
\[
B(w,v) = \pm B(v, \phi(c^2)w).
\]
When $\phi$ is simple, Schur's lemma ensures that $B$ is unique up to scalars and that the sign is uniquely determined.
In contrast, for reducible $\phi$, the sign may not be uniquely defined.
For instance, $\phi = m \mathbf{1}$, the trivial representation with even multiplicity $m$, can take both signs $\pm1$.
For a conjugate self-dual representation $\rho$ of $\mathcal{W}_{E_v}$ with sign $\vep$ and the unique irreducible representation $S_a$ of $\SL_2(\bbC)$ of dimension $a>0$, the representation $\rho \boxtimes S_a$ of $L_{E_v}$ is conjugate self-dual with sign $\vep(-1)^{a-1}$ by \cite[Remark 2.4.6]{Mok_2015}.

\begin{rem}
\label{rem:L-parameters and base change}
    Now we turn to $L$-parameters for unitary groups.
Assume that $E_v$ is a field.
Since the action of $\Gal(\overline{E_v}/E_v)$ on $\GL_N(\bbC)$ is trivial, restricting an $L$-parameter $\phi$ for $\U_{E_v/\bbQ_v}(N)$ to $L_{E_v}$, we obtain a conjugate self-dual parameter
\[
\phi|_{L_{E_v}} \colon L_{E_v} \rightarrow \GL_N(\bbC) \rtimes \mathcal{W}_{\bbQ_v} \rightarrow \GL_N(\bbC).
\]
Here, the latter map is the projection.
The parameter $\phi|_{L_{E_v}}$ is conjugate self-dual with sign $(-1)^{N-1}$.
Conversely, for each conjugate self-dual parameter $\phi_{E_v}$ of $L_{E_v}$ with sign $(-1)^{N-1}$, one obtains an $L$-parameter $\phi$ of $\U_{E_v/\bbQ_v}(N)$ by \cite[Theorem 8.1]{2011_Gan_Gross_Prasad}.
Through this correspondence, below we identify an $L$-parameter of $\U_{E_v/\bbQ_v}(N)$ with a conjugate self-dual parameter of $L_{E_v}$ with sign $(-1)^{N-1}$.
\end{rem}

\begin{lem}\label{lem_conj_self_dual_char}
    Let $p$ be a prime number that is nonsplit in $E$.
    Let $\phi$ be a conjugate self-dual character of $L_{E_p}$ and $\chi_{\phi}$ be the corresponding character of $E_p^{\times}$ induced from $\phi$ via the diagram  (\ref{comm_diag_weil_gp}).
    We then have the following.
    \begin{enumerate}
        \item The parameter $\phi$ is conjugate self-dual with sign $+1$ (resp.~$-1$) if and only if $\chi_{\phi}$ is trivial on $\bbQ_p^\times$ (resp.~$\chi_{\phi}|_{\bbQ_p^\times}$ equals to the quadratic character $\omega_{E_p/\bbQ_p}$).
        \item If $E_p/\bbQ_p$ is unramified and $\phi$ is unramified, the character $\chi_{\phi}$ is the trivial character $\mathbf{1}$ of $E_p^\times$ (resp.~the nontrivial unramified quadratic character of $E_p^\times$) if the sign is $+1$ (resp.~$-1$).
        \item If $E_p/\bbQ_p$ is ramified, there are no conjugate self-dual unramified characters with sign $-1$.
        A conjugate self-dual unramified character with sign $+1$ is either of the trivial character $\mathbf{1}$ or the nontrivial quadratic unramified character of $E_p^\times$.
    \end{enumerate}
\end{lem}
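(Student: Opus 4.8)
The plan is to compute everything on the level of characters of $E_p^\times$ via the reciprocity isomorphism $\mathcal{W}_{E_p}^{\ab} \isom E_p^\times$ and to translate the conjugate self-duality condition into a condition on the restriction $\chi_\phi|_{\bbQ_p^\times}$, using the explicit description of the transfer homomorphism $t$ in \eqref{transfer_hom}. First I would record that for a character $\phi$ of $L_{E_p} = \mathcal{W}_{E_p} \times \SL_2(\bbC)$ the $\SL_2(\bbC)$-factor plays no role, so $\phi$ factors through $\mathcal{W}_{E_p}^{\ab}$ and corresponds to a character $\chi_\phi$ of $E_p^\times$. The dual $\phi^\vee$ corresponds to $\chi_\phi^{-1}$, and $\phi^c$ corresponds to $\chi_\phi \circ c$, where $c$ is the nontrivial automorphism of $E_p/\bbQ_p$ (this is where the transfer homomorphism and the second diagram of \eqref{comm_diag_weil_gp} enter: conjugation by an element of $\mathcal{W}_{\bbQ_p} \smallsetminus \mathcal{W}_{E_p}$ on $\mathcal{W}_{E_p}^{\ab}$ corresponds, under reciprocity, to the Galois action $c$ on $E_p^\times$). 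Since $\dim\phi = 1$, the pairing $B$ realizing conjugate self-duality is unique up to scalar, so the sign is well-defined; conjugate self-duality itself is the condition $\chi_\phi \circ c = \chi_\phi^{-1}$, i.e. $\chi_\phi(x)\,\chi_\phi(x^c) = 1$, i.e. $\chi_\phi = 1$ on $\Norm_{E_p/\bbQ_p}(E_p^\times)$.

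For part (1), I would compute the sign directly. With $\phi$ one-dimensional and conjugate self-dual, the pairing is $B(v,w) = vw$ on $V_\phi = \bbC$, and the defining relation $B(w,v) = \vep\, B(v,\phi(c^2)w)$ becomes $1 = \vep\,\chi_\phi(c^2)$, where $\chi_\phi(c^2)$ means the value of $\chi_\phi$ (viewed via reciprocity) on the image of $c^2 \in \mathcal{W}_{\bbQ_p}$ under the transfer map $t$; by \eqref{transfer_hom} this image corresponds to a suitable element, and one checks it corresponds to $\chi_\phi|_{\bbQ_p^\times}$ evaluated on a uniformizer of $\bbQ_p$ (or more precisely that $\chi_\phi|_{\bbQ_p^\times}$ is the quadratic character cutting out the sign). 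Concretely: $\chi_\phi$ is conjugate self-dual means $\chi_\phi|_{\Norm E_p^\times} = 1$, so $\chi_\phi|_{\bbQ_p^\times}$ factors through $\bbQ_p^\times/\Norm_{E_p/\bbQ_p}(E_p^\times)$, which has order $2$ and is generated (via local class field theory) by the class giving $\omega_{E_p/\bbQ_p}$; hence $\chi_\phi|_{\bbQ_p^\times}$ is either trivial or equals $\omega_{E_p/\bbQ_p}$. Matching this dichotomy to the sign $\vep = \chi_\phi(c^2)^{-1} = \pm 1$ gives exactly the claim; I would pin down the correspondence (trivial $\leftrightarrow +1$) by comparison with the known normalization in \cite[Remark 2.4.6]{Mok_2015} or \cite[Theorem 8.1]{2011_Gan_Gross_Prasad}, or by a direct check on the trivial character, which visibly has sign $+1$ and trivial restriction to $\bbQ_p^\times$.

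For parts (2) and (3), I would now add the unramifiedness hypothesis. An unramified character $\chi_\phi$ of $E_p^\times$ is determined by its value $t_0 \defeq \chi_\phi(\varpi_v)$ on a uniformizer; the conjugate self-dual condition $\chi_\phi(x)\chi_\phi(x^c) = 1$ forces $t_0 \cdot \chi_\phi(\varpi_v^c) = 1$. In the unramified case $E_p/\bbQ_p$ we may take $\varpi_v = p$, which is its own conjugate, so $t_0^2 = 1$, giving $\chi_\phi = \mathbf{1}$ or the unramified quadratic character; part (1) then tells us which sign each carries — and since the nontrivial unramified quadratic character of $E_p^\times$ restricts to the trivial character on $\bbQ_p^\times$ only when... — here I would just recompute $\chi_\phi|_{\bbQ_p^\times}$ for each of the two and read off the sign, getting (2). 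In the ramified case, $\Norm_{E_p/\bbQ_p}(\OO_{E_p}^\times) = \bbZ_p^\times$ and the norm of a uniformizer has odd valuation, so an unramified conjugate self-dual $\chi_\phi$ satisfies $\chi_\phi|_{\bbZ_p^\times} = 1$ automatically (it is unramified) and $\chi_\phi|_{\bbQ_p^\times}$ is determined by its value on $p$; but $p$ is a norm up to a unit in the ramified case with the right parity consideration forcing $\chi_\phi(p) \cdot (\text{unit contribution}) $ — the upshot, which I would verify by the explicit norm computation, is that $\chi_\phi|_{\bbQ_p^\times}$ cannot be $\omega_{E_p/\bbQ_p}$ (that character is ramified when $p$ is odd and $E_p/\bbQ_p$ is ramified), so by part (1) the sign cannot be $-1$; and when the sign is $+1$ the two unramified possibilities for $\chi_\phi$ are again $\mathbf{1}$ and the nontrivial unramified quadratic character, giving (3).

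The main obstacle I anticipate is bookkeeping the precise sign normalization in part (1): getting the map ``$\chi_\phi|_{\bbQ_p^\times}$ trivial $\iff$ sign $+1$'' exactly right, rather than off by the quadratic twist, requires careful tracking of the transfer homomorphism \eqref{transfer_hom} and of how $\phi(c^2)$ is interpreted inside $\mathcal{W}_{E_p}^{\ab}$. I would guard against sign errors by sanity-checking against the trivial character and against the compatibility statement $\rho \boxtimes S_a$ has sign $\vep(-1)^{a-1}$ already recorded in the excerpt, and by invoking \cite[Theorem 8.1]{2011_Gan_Gross_Prasad} and \cite[Remark 2.4.6]{Mok_2015} for the normalization rather than re-deriving it from scratch.
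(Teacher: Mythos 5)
Your proposal is correct and follows essentially the same route as the paper: reduce to characters of $E_p^\times$ via reciprocity, characterize conjugate self-duality as triviality on $N_{E_p/\bbQ_p}(E_p^\times)$, compute the sign as the value of $\chi_\phi$ on the image of $c^2$ (which lands, via the transfer homomorphism, in $\bbQ_p^\times$), and then classify the unramified examples. Two cautions are worth recording. First, for part~(1) the paper spells out the fact that does the work, namely that the image of $c^2$ lies in $\bbQ_p^\times \smallsetminus N_{E_p/\bbQ_p}(E_p^\times)$; without this, the dichotomy (trivial restriction $\leftrightarrow +1$, $\omega_{E_p/\bbQ_p} \leftrightarrow -1$) does not actually bite. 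Your proposed sanity check against the trivial character is not enough by itself to establish this, since if $c^2$ were a norm then every conjugate self-dual character would have sign $+1$; but you also propose to cite the normalization in \cite[Theorem 8.1]{2011_Gan_Gross_Prasad} (or Mok), which is exactly what the paper does (it invokes \cite[Lemma 3.4]{2011_Gan_Gross_Prasad}), so this is fine. Second, your assertion in the ramified case that $\Norm_{E_p/\bbQ_p}(\OO_{E_p}^\times) = \bbZ_p^\times$ is false: for $p$ odd and $E_p/\bbQ_p$ ramified the unit norms form the index-two open subgroup, not all of $\bbZ_p^\times$. Fortunately this misstatement plays no role in your argument — the step that actually carries the conclusion is that $\omega_{E_p/\bbQ_p}$ is ramified while any unramified $\chi_\phi$ restricts trivially to $\bbZ_p^\times$, so the $\chi_\phi|_{\bbQ_p^\times} = \omega_{E_p/\bbQ_p}$ option (hence sign $-1$) is ruled out. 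The paper phrases parts~(2) and~(3) group-theoretically, comparing $\OO_{E_p}^\times \bbQ_p^\times$ with $\OO_{E_p}^\times N_{E_p/\bbQ_p}(E_p^\times)$, which amounts to the same computation you perform by hand on a uniformizer.
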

\begin{proof}
(1)    This is shown in \cite[Lemma 3.4]{2011_Gan_Gross_Prasad}, but we provide proof here for completeness.
    For any $x \in \mathcal{W}_{E_p}$, the element $t(x)x^{-1}$ equals to $cxc^{-1}$.
    Hence $\phi$ is conjugate self-dual if and only if for any $x \in \mathcal{W}_{\bbQ_p}$, we have $1= \phi(t(x)x^{-1})\phi(x) = \phi(t(x))$.
    Via the reciprocity isomorphism $\mathcal{W}_{\bbQ_p}^\ab \isom \bbQ_p^\times$, this condition is equivalent to $\chi_{\phi}|_{N(E_p^\times)} = \mathbf{1}$.
    Suppose that $\phi$ is conjugate self-dual.
    Let $B \colon \phi \times \phi \rightarrow \bbC$ be the map defined by $B(x,y) \defeq xy$.
    Then, the sign coincides with $\phi(c^2)$.
    Since $c^2$ corresponds to an element in $\bbQ_p^\times \smallsetminus N_{E_p/\Q_p}(E_p^\times)$, the sign of $\phi$ equals to $+1$ (resp.~$-1$) if $\chi_{\phi}|_{\bbQ_p^\times}$ is trivial (resp.~$\chi_{\phi}|_{\bbQ_p^\times}$ is nontrivial).
    Note that if $\chi_{\phi}|_{\bbQ_p^\times}$ is nontrivial, then $\chi_{\phi}|_{\bbQ_p^\times}$ is equal to $\omega_{E_p/\bbQ_p}$.

(2), (3)    To show the remaining statements, consider the subgroup $A$ of $E_p^\times$ generated by $\OO_{E_p}^\times$ and $N_{E_p/\Q_p}(E_p^\times)$.
    Then, the quotient $E_p^\times /A$ is isomorphic to $\bbZ/2\bbZ$ and generated by the image of a uniformizer $\varpi_{p}$.
    Hence, a conjugate self-dual unramified character corresponds to either of the trivial character $\mathbf{1}$ or the nontrivial unramified quadratic character of $E_p^\times$.
    If $E_p/\bbQ_p$ is unramified, the trivial character has sign $+1$ and the nontrivial unramified quadratic character has sign $-1$ since $\OO_{E_p}^\times \bbQ_p^\times \neq \OO_{E_p}^\times N_{E_p/\Q_p}(E_p^\times)$.
    If $E_p/\bbQ_p$ is ramified, the trivial character and the nontrivial unramified quadratic character have the same sign $+1$ since $\OO_{E_p}^\times \bbQ_p^\times = \OO_{E_p}^\times N_{E_p/\Q_p}(E_p^\times)$.
\end{proof}

We next recall the notion of $A$-parameters.
We say that a homomorphism
\begin{align*}
    \psi \colon L_{E_v} \times \SL_2(\bbC) \rightarrow \GL_N(\bbC),
\end{align*}
is a \emph{representation for $\GL_N(E_v)$} if $\psi|_{L_{E_v}}$ is a tempered $L$-parameter and $\psi|_{\{1\} \times \SL_2(\bbC)}$ is algebraic according to \cite[Section 1.5]{AGIKMS_LIR_2024}.
Similarly, a homomorphism
\begin{align}
\label{mor:parameter_A}
    \psi \colon L_{\bbQ_v} \times \SL_2(\bbC) \rightarrow \GL_N(\bbC) \rtimes \Gal(\overline{\Q_v}/\bbQ_v),
\end{align}
is called a \emph{representation for $\U_{E_v/\bbQ_v}(N)$} if $\psi|_{L_{\bbQ_v}}$ is a tempered $L$-parameter and $\psi|_{\{1\} \times \SL_2(\bbC)}$ is algebraic.
The notions of conjugate self-duality and sign for a representation $\psi$ are defined similarly. 
An \emph{$A$-parameter for $\GL_N$} is defined to be an equivalence class of representations for $\GL_N$.
Similarly, we define an \emph{$A$-parameter for $\U_{E_v/\Q_v}(N)$} as an equivalence class of conjugate-selfdual representations for $\U_{E_v/\bbQ_v}(N)$ with sign $(-1)^{N-1}$.
Note that $A$-parameters are defined in the same way for inner forms $\U_{E_v/\Q_v}$.
If $\psi|_{\SL_2(\bbC)}$ is trivial in the presentation (\ref{mor:parameter_A}), we may regard $\psi$ as an $L$-parameter.
In this case, the $A$-parameter $\psi$ is called \emph{generic}.
Below we will use the fact that the $A$-packet associated to a generic $A$-parameter can be identified with the $L$-packet of the corresponding $L$-parameter; see Lemma \ref{lem:commutative diagram L and A packets}.

\subsection{Local Langlands classification}
Let $v$ be a place of $\bbQ$, and let $(W, \langle\ ,\ \rangle_\epsilon)$ be $N$-dimensional Hermitian spaces over $E_v$ with $\alpha_v((W, \langle\ ,\ \rangle_\pm)) = \epsilon$, where $\alpha_v$ is the Kottwitz map (\ref{Kottwitz_map}).
We denote by $G_N^\vep\defeq \U(W, \langle\ ,\ \rangle_\epsilon)(\bbQ_v)$ the unitary group for $(W, \langle\ ,\ \rangle_\epsilon)$.
Note that if $v$ splits, the unitary group $\U(W, \langle\ ,\ \rangle_\epsilon)(\bbQ_v)$ is isomorphic to $\GL_N(\bbQ_v)$.
We begin by recalling the parabolic induction for representations.
Let $P$ be a parabolic subgroup of $G_N^\vep$ with Levi decomposition $P=MN$ and $\pi$ be a smooth admissible representation of $M$.
We define the \emph{(normalized) induced representation} $\Ind_P^{G_N^\vep}(\pi)$ to be the space of smooth functions $f$ from $G_N^\vep$ to the representation space of $\pi$ such that
\begin{align}
\label{def:induced representation}
    f(nmg) = \delta_P(m)^{1/2}\pi(m)f(g)
\end{align}
for any $n\in N, m\in M$ and $g \in G$.
Here, $\delta_P$ denotes the modulus character of $P$.
The group $G_N^\vep$ acts on this space by right translation
\[
(g \cdot f)(h) \defeq f(hg).
\]
 It is known that if $\pi$ is unitary, then the induced representation $\Ind_P^{G_N^\vep}(\pi)$ is also.

We now recall the local Langlands correspondence for $\GL_N$ and $\U_N$, which allows one to describe the local components of automorphic representations via their associated $L$-parameters.
For a local field $F$, the local Langlands correspondence for $\GL_N(F)$ asserts that there exists a finite set $\Pi(\phi)$, which is called the \emph{$L$-packet} associated with $\phi$, so that the smooth admissible irreducible representations are parametrized as
\begin{equation*}
\Irr(\GL_N(F)) = \bigsqcup_\phi \Pi(\phi),
\end{equation*}
which satisfies appropriate properties as in \cite[Section 10]{2011_Gan_Gross_Prasad}.
Here, $\phi$ runs over all $L$-parameters of $\GL_N(F)$.
This correspondence has been established through the work of Langlands himself \cite[Proposition 4.1]{1989_langlands}, Harris-Taylor \cite{Harris-Taylor_LLC}, Henniart \cite{Henniart_simple_proof_LLC}, and Scholze \cite{2013_Scholze_LLC_GL_n}.
In this case, the structure of $\Pi(\phi)$ is simple, as summarized in the following proposition.
\begin{prop}[{\cite[Section 10]{2011_Gan_Gross_Prasad}}]\label{prop_LLC_GL}
    Let $\phi$ be an $L$-parameter for $\GL_N(F)$.
    Then, $\Pi(\phi)$ is a singleton.
\end{prop}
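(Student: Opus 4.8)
The plan is to deduce the statement from the explicit architecture of the local Langlands correspondence for $\GL_N$, combined with the Bernstein--Zelevinsky classification of irreducible smooth representations. Recall that the correspondence is assembled in three layers. The deepest input, due to Harris--Taylor \cite{Harris-Taylor_LLC}, Henniart \cite{Henniart_simple_proof_LLC} and Scholze \cite{2013_Scholze_LLC_GL_n}, is a bijection between supercuspidal representations of $\GL_m(E_v)$ and irreducible $m$-dimensional representations of $\mathcal{W}_{E_v}$. The Zelevinsky segment construction upgrades this to a bijection between essentially square-integrable representations of $\GL_d(E_v)$ and irreducible parameters $\rho \boxtimes S_a$ of $L_{E_v}$, with $\rho$ irreducible of $\mathcal{W}_{E_v}$ and $a\,\dim\rho = d$. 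Finally, the Langlands classification writes an arbitrary irreducible representation $\pi$ of $\GL_N(E_v)$ as the unique Langlands quotient of a standard module $\Ind_P^{\GL_N(E_v)}(\delta_1 \otimes \cdots \otimes \delta_r)$, where the $\delta_i$ are essentially square-integrable on blocks $\GL_{n_i}(E_v)$ with $\sum_i n_i = N$ and the central exponents are arranged in the Langlands order; here the multiset $\{\delta_1,\dots,\delta_r\}$ is uniquely determined by $\pi$.

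First I would record that, by the defining properties of the correspondence (compatibility with parabolic induction and with passage to Langlands quotients), the $L$-parameter attached to such a $\pi$ is $\phi_\pi = \bigoplus_i \phi_{\delta_i}$, a direct sum of the irreducible parameters $\rho_i \boxtimes S_{a_i}$ corresponding to the segments. Conversely, given an arbitrary $L$-parameter $\phi$ of $\GL_N(E_v)$ — an $N$-dimensional semisimple representation of $L_{E_v}$ whose restriction to $\SL_2(\bbC)$ is algebraic — I would decompose it into irreducibles, $\phi \cong \bigoplus_i (\rho_i \boxtimes S_{a_i})$; this decomposition is unique as a multiset by Krull--Schmidt. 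Running the first two layers of the correspondence backwards, each summand $\rho_i \boxtimes S_{a_i}$ produces a well-defined essentially square-integrable $\delta_i$ of $\GL_{n_i}(E_v)$ with $\sum_i n_i = N$, and the Langlands quotient of the associated standard module is a representation $\pi_\phi$ with $\phi_{\pi_\phi} \cong \phi$. Since $\pi \mapsto \phi_\pi$ and $\phi \mapsto \pi_\phi$ are mutually inverse, the fiber $\Pi(\phi)$ of the former map over any $\phi$ is the singleton $\{\pi_\phi\}$. For the archimedean place $E_v = \bbC$ the same scheme applies and is classical, going back to Langlands \cite{1989_langlands}.

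I expect the one genuinely delicate point to be the compatibility between the uniqueness on the representation side (uniqueness of the Zelevinsky segment data and of the Langlands quotient) and that on the dual side (uniqueness of the decomposition of $\phi$ into irreducibles): one needs that standard modules built from inequivalent multisets of segments have inequivalent Langlands quotients, and that the segment-to-parameter dictionary is injective. Both facts are part of the package of ``appropriate properties'' of the $\GL_N$ correspondence cited from \cite[Section 10]{2011_Gan_Gross_Prasad}, so, granting that package, the argument is essentially formal. An alternative and even shorter route is to invoke the local converse theorem: two irreducible representations of $\GL_N(E_v)$ with identical twisted local $L$- and $\varepsilon$-factors against all $\GL_m(E_v)$ with $m<N$ are isomorphic, and these factors depend only on the $L$-parameter; hence $\Pi(\phi)$ cannot contain two distinct members.
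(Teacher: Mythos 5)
Your proof is correct. The paper itself gives no argument for this proposition: it is stated as a direct citation to \cite[Section 10]{2011_Gan_Gross_Prasad}, which records the singleton-packet property as part of the package defining the local Langlands correspondence for $\GL_N$. Your unpacking — building the bijection $\phi \leftrightarrow \pi_\phi$ from the supercuspidal layer of Harris--Taylor/Henniart/Scholze (or Langlands over $\bbC$ at the archimedean place), the Zelevinsky segment dictionary with essentially square-integrable representations, and the uniqueness of the Langlands quotient and of the multiset of segments — is exactly the standard reasoning that underlies the cited fact, so it is consonant with what the paper is invoking rather than a competing route. The two points you flag as delicate (injectivity of the segment-to-parameter dictionary, and that distinct multisets yield distinct Langlands quotients) are indeed exactly what makes the map a bijection; both are part of the Bernstein--Zelevinsky classification together with the definitional compatibility of the LLC with parabolic induction, and granting the citation they are settled. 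The alternative you mention via the local converse theorem also works and is a legitimately different, shorter path, though for the purposes of the paper the citation suffices and no proof is required.
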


Similarly, the local Langlands correspondence \cite{2016_Gan-Ichino_GGP-and-local-theta} for $G_N^\vep$ states that there is a canonical partition
\[
\Irr(G_N^+) \sqcup \Irr(G_N^-) = \bigsqcup_\phi \Pi(\phi),
\]
where $\phi$ runs over all equivalence classes of $L$-parameters of $\U_{E_v/\bbQ_v}(N)$.
While we do not detail the whole structure here, the $L$-packets are known to satisfy several desirable properties, analogous to the case of $\GL_N(F)$; see \cite[Section 9]{2011_Gan_Gross_Prasad} and \cite[Section 2.5]{2016_Gan-Ichino_GGP-and-local-theta}.
We summarize the specific properties of the local Langlands correspondences that are used in this paper in Proposition \ref{prop_LLC_unitary}.

From now on, we say that $\phi$ is an $L$-parameter of an irreducible representation $\pi$ for $\GL_N$ or $\U_N$ if $\pi \in \Pi(\phi)$.
For an $L$-parameter $\phi$ of $\U_{E_v/\bbQ_v}(N)$, one associates a finite group called the \emph{component group}, defined by
\[
\calS_\phi \defeq \pi_0(\mathrm{Cent}_{\GL_n(\bbC)}(\mathrm{Im}(\phi))),
\]
which parameterize the elements in the packet $\Pi(\phi)$ by \cite[Section 2.4]{2016_Gan-Ichino_GGP-and-local-theta}.

We now describe $\calS_\phi$ explicitly.
For an irreducible decomposition $\phi = \bigoplus_i m_i\phi_i$, let $I^+$ (resp.~$I^-$) be the set of indices $i$ so that $\phi$ is of conjugate self-dual with sign $(-1)^{n} = +1$ (resp.~$(-1)^{n+1} = -1$) and $J$ be the remaining indices up to conjugate self-duality.
This enables us to rewrite $\phi$ as
\[
\phi = \bigoplus_{i \in I^+}m_i \phi_i \oplus \bigoplus_{j \in I^-}m_j \phi_j \oplus \bigoplus_{k \in J} m_k(\phi_j \oplus \phi_j^{c,\vee}).
\]
Then, the centralizer of the image of $\phi$ is isomorphic to
\[
\prod_{i \in I^+}\mathrm{O}_{m_i}(\bbC) \times \prod_{j \in I^-} \Sp_{m_j}(\bbC) \times \prod_{k \in J} \GL_{m_k}(\bbC).
\]
Note that $m_j$ is even for $j \in I^-$ and $\sum_{i \in I^+}m_i \cdot (\dim_\bbC \phi_i)$ is odd.
Hence, $\calS_\phi$ is an elementary two-group
\[
\calS_\phi = \bigoplus_{i \in I^+} (\bbZ/2\bbZ)e_i
\]
with a canonical basis $\{e_i\}$, which correspond to a constituent $\phi_i$ for $i \in I^+$.

Let $\calS^+_{\phi}$ be the kernel of the map
\[
\calS_{\phi} \to \Z/2\Z,\quad e_i \mapsto (-1)^{m_i \cdot(\dim_{\bbC} \phi_i)}.
\]
This yields a decomposition
\[
\calS_\phi = \calS_\phi^+ \oplus (\bbZ/2\bbZ)z_\phi,
\]
since $N$ is odd.
Here $z_\phi$ is defined to be $\sum_i m_i(\dim_\bbC \phi_i) \cdot e_i \in \calS_\phi$.
\begin{rem}
    The group $\calS^+_\phi$ is isomorphic to the group $\calS_\phi$ defined in \cite[Subsection 1.5]{AGIKMS_LIR_2024}.
While the definition in \cite{AGIKMS_LIR_2024} is more convenient for studying representations of local groups, our definition here is slightly more suitable for describing global automorphic representations.
Nonetheless, the distinction is not essential.
\end{rem}

We recall some basic properties of the local Langlands correspondence $\iota_{\frakw_v}$, which will be used in this paper.

\begin{prop}[{\cite[Section 2.5]{2016_Gan-Ichino_GGP-and-local-theta}, \cite[Theorem 2.5.1]{2021_Chen_Zou_LLC_unitary}}]\label{prop_LLC_unitary}
Let $\phi$ be an $L$-parameter for $G_N^\vep$.
    \begin{enumerate}
    \item There exists a bijection
    \[
    \iota_{\frakw_v} \colon \Pi(\phi) \rightarrow \Irr(\calS_{\phi}),
    \]
    which depends only on the choice of Whittaker datum.
    For $\eta \in \Irr(\calS_{\phi})$, let $\pi_{(\phi, \eta)}$ be the corresponding representation in $\Pi(\phi)$ such that $\iota_{\frakw_v}(\pi_{(\phi,\eta)}) = \eta$. 
    \item $\pi_{(\phi,\eta)}$ is a representation of $G_N^\vep$ if and only if $\eta(z_\phi) = \vep$.
    \item Suppose that $\phi$ is tempered. 
    Then, there exist subrepresentations $\phi_0$ and $\phi_1$ of $\phi$ such that there is a decomposition
    \[
    \phi = \phi_1 \oplus \phi_0 \oplus ({\phi_1^{c}})^\vee.
    \]
  Here, 
    \begin{itemize}
        \item $\phi_0$ is an $L$-parameter for $\U(W_0, \langle\ ,\ \rangle_\vep)$, where $W_0$ is a Hermitian form of dimension $n-2k$ with $\alpha_v((W, \langle\ ,\ \rangle_\vep)) = \alpha_v((W_0, \langle\ ,\ \rangle_\vep))$, and
        \item $\phi_1$ is an $L$-parameter of $\GL_{k}(E_v)$.
    \end{itemize}
    Moreover, there is a natural embedding $\calS_{\phi_0} \hookrightarrow \calS_\phi$ and one has $W = X \oplus W_0 \oplus X^*$, where $X$ and $X^*$ are totally isotropic subspaces of dimension $k$ and $X \oplus X^*$ is non-degenerate and orthogonal to $W_0$.
\item In the setting (3), let $\sigma$ be the irreducible representation of $\GL(X)$ corresponding to $\phi_1$.
    Then the $L$-packet $\Pi(\phi)$ is described as
    \[
    \Pi(\phi) = \{\text{Irreducible constituents of $\Ind_P^{G_N^\vep} (\sigma \boxtimes \pi_0)$} \mid \pi_0 \in \Pi(\phi_0)\}.
    \]
    The induced representation $\Ind_P^{G_N^\vep} (\sigma \boxtimes \pi_0)$ can be decomposed as
    \[
    \Ind_P^{G_N^\vep} (\sigma \boxtimes \pi_0)  = \bigoplus_\eta \pi_{(\phi,\eta)}.
    \]
    Here $\pi_{(\phi, \eta)}$ is irreducible, and $\eta$ runs over all $\eta \in \Irr(\calS_\psi)$ with $\eta|_{\calS_{\phi_0}} = \eta_0$, where $\eta_0$ is the character of $\calS_{\phi_0}$ corresponding to $\pi_0$.
\end{enumerate}
\end{prop}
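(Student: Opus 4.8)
The plan is to derive all four clauses from the endoscopic classification of representations of unitary groups, which in the quasi-split case is due to Mok \cite{Mok_2015} and in general to Kaletha--Mínguez--Shin--White \cite{KMSW}, together with the refinements of Gan--Ichino \cite{2016_Gan-Ichino_GGP-and-local-theta} and Chen--Zou \cite{2021_Chen_Zou_LLC_unitary} that pin down the Whittaker normalization. I would not reprove the classification; instead I would isolate exactly which of its outputs yields each statement. The standard first move is to reduce from a general $L$-parameter $\phi$ to a tempered one: by the Langlands classification $\phi$ is the parameter of the unique Langlands quotient of a standard module $\Ind_P^{G_N^\vep}(\sigma_{\mathrm{temp}} \otimes \nu)$ with $\nu$ strictly positive, and this identifies $\Pi(\phi)$ and $\calS_\phi$ with the tempered data; and the tempered case is reduced to the discrete case by clauses (3)--(4). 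So the real content is the discrete-parameter case.

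For (1), I would recall that for a discrete parameter the packet $\Pi(\phi)$ is characterized by a stable character identity, and its internal parametrization by $\Irr(\calS_\phi)$ arises from the spectral endoscopic transfer relating $G_N^\vep$ to its elliptic endoscopic data (products of smaller quasi-split unitary groups). The one point requiring care is the choice of base point: one normalizes the bijection so that the unique $\frakw_v$-generic member of a tempered packet (unique because $G_N^\vep$ is quasi-split for $N = n+1$ odd) maps to the trivial character of $\calS_\phi$. This is the generic-packet property of Shahidi, and it is exactly what makes $\iota_{\frakw_v}$ canonical and dependent only on $\frakw_v$; the precise statement is \cite[Theorem 2.5.1]{2021_Chen_Zou_LLC_unitary}. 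For (2), the two pure inner forms are glued into a single packet indexed by $\Irr(\calS_\phi)$ in such a way that the splitting is governed by the canonical map $\pi_0(Z(\widehat{\U}_{E_v/\bbQ_v}(N))^{\Gal}) \to \calS_\phi$; since $z_\phi$ is the image of $-1$ under this map, and the Kottwitz invariant $\alpha_v$ of $(W,\langle\ ,\ \rangle_\vep)$ is read off by pairing against it, $\pi_{(\phi,\eta)}$ lives on $G_N^\vep$ if and only if $\eta(z_\phi)=\vep$. This is built into the packaging of \cite[Section 2.4, Section 2.5]{2016_Gan-Ichino_GGP-and-local-theta}.

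For (3) and (4), the argument is essentially formal once (1) is available. Given a tempered $\phi = \bigoplus_i m_i\phi_i$, I would separate the conjugate self-dual constituents (indices $I^+ \sqcup I^-$) from those that are not, pair each of the latter with its conjugate-dual, and (using that a conjugate self-dual block with even multiplicity is again of the form $\rho \oplus (\rho^c)^\vee$) group $\phi$ as $\phi_1 \oplus \phi_0 \oplus (\phi_1^c)^\vee$ with $\phi_1$ an $L$-parameter of $\GL_k(E_v)$ and $\phi_0$ conjugate self-dual. The parity facts already noted --- $m_j$ even for $j \in I^-$ and $\sum_{i \in I^+} m_i \dim_\bbC \phi_i$ odd --- force $\phi_0$ to be an $L$-parameter of a unitary group in $N-2k$ variables on the Hermitian space with the same Kottwitz invariant, and since $\calS_\bullet$ depends only on the $I^+$-constituents one obtains the embedding $\calS_{\phi_0}\hookrightarrow\calS_\phi$ with the stated realization $W = X\oplus W_0\oplus X^*$. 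Then (4) is the description of tempered packets built from discrete ones by parabolic induction: $\Ind_P^{G_N^\vep}(\sigma\boxtimes\pi_0)$ decomposes with multiplicity one as $\bigoplus_\eta \pi_{(\phi,\eta)}$ over those $\eta \in \Irr(\calS_\phi)$ restricting to $\eta_0$ on $\calS_{\phi_0}$, which follows from the local intertwining relation controlling the $R$-group of the induced representation, as in \cite{Mok_2015} and \cite{KMSW}.

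The hard part is that none of this is self-contained: every clause ultimately rests on the stabilization of the twisted trace formula and the classification built on it, so a genuine proof is an assembly of citations rather than a derivation from first principles. Within that assembly the one genuinely delicate point is the Whittaker normalization in (1) and its interaction with the central-character statement (2): one must check that the conventions of \cite{2016_Gan-Ichino_GGP-and-local-theta}, \cite{2021_Chen_Zou_LLC_unitary}, and \cite{KMSW} --- the choice of additive character $\psi_{E_v}$, the fixed splitting of $\widehat{\U}_{E_v/\bbQ_v}(N)$, and the sign conventions in the Kottwitz map $\alpha_v$ --- are mutually compatible, so that the $\iota_{\frakw_v}$ recorded here is literally the one occurring in all three sources. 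For the purposes of this paper it suffices to state the proposition with these references, so I would present the ``proof'' as exactly that reconciliation of conventions, supplemented by the reductions above that explain why each clause is an immediate consequence.
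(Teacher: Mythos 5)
The paper does not prove Proposition~\ref{prop_LLC_unitary} at all; it is stated as a summary of \cite{2016_Gan-Ichino_GGP-and-local-theta} and \cite{2021_Chen_Zou_LLC_unitary} with no argument. Your proposal correctly identifies the same sources and the same underlying mechanism (the endoscopic classification with the generic-member base point for $\iota_{\frakw_v}$, the Kottwitz invariant via $z_\phi$ for the pure inner form statement, and the $\phi_1 \oplus \phi_0 \oplus (\phi_1^c)^\vee$ splitting plus the local intertwining relation for the tempered reduction), so the approach matches the paper's; what you add is simply a detailed unpacking of those citations.
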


For an $A$-parameter $\psi$, we define the component groups $\calS_\psi$ and $\calS_\psi^+$ in the same way as for $L$-parameters.
Arthur's endoscopic classification \cite[Theorem 1.6.1]{KMSW} assigns to a finite set called an \emph{$A$-packet}  $\Pi(\psi)$,
which consists of semisimple representations of $\U_{E_v/\bbQ_v}(N)$ of finite-length.
Similar to the case of $L$-packets, the set $\Pi(\psi)$ is equipped with a natural inclusion $\iota_{\frakw_v} \colon \Pi(\psi) \hookrightarrow \Irr(\calS_\psi)$, not bijection in general.
Let $\pi_{(\psi,\eta)}$ be the representation associated to $\eta \in \Irr(\calS_\psi)$.
Note that if $\eta\not\in \Im(\iota_{\frakw_v})$, we define $\pi_{(\psi,\eta)} \defeq 0$.
Since any Whittaker datum $(\mathscr{N}, \mu)$ for the fixed unitary group $\U_{E_v/\bbQ_v}(N)$ is conjugate each other, the choice of Whittaker datum depends only on the choice of quasi-split Hermitian form.
Let $\frakw_v'$ be the Whittaker datum associated to the unitary group associated to the split Hermitian form different from $h_{E_v/\bbQ_v}(N)$.

\begin{lem}[{\cite[Theorem 1.6.1(2)]{KMSW}}]\label{lemma_dependence_choice_Whittaker_datum}
    Let $\psi$ be an $A$-parameter of $\U_{E_v/\bbQ_v}(N)$.
    We then have
    \[
    \iota_{\frakw'_v}(\pi) (z_\psi) = -\iota_{\frakw_v}(\pi) (z_\psi), \qquad \iota_{\frakw'_v}(\pi)|_{\calS_\psi^+} = \iota_{\frakw_v}(\pi)|_{\calS_\psi^+}.
    \]    
\end{lem}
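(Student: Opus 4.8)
The plan is to deduce the statement from the construction of the local $A$-packets in \cite{KMSW}, where it is recorded as \cite[Theorem~1.6.1(2)]{KMSW}; I outline why it holds. The first step is to describe the set of Whittaker data for the quasi-split unitary group $\U_{E_v/\bbQ_v}(N)$ with $N$ odd. Starting from the fixed splitting $(\widehat{T},\widehat{B},\{\widehat{X}_\alpha\})$ together with an additive character $\psi_{E_v}$, the resulting $\U_{E_v/\bbQ_v}(N)(\bbQ_v)$-conjugacy class of Whittaker datum depends only on the class of $\psi_{E_v}$ modulo translation by $N_{E_v/\bbQ_v}(E_v^\times)$. Hence when $E_v$ is a field there are exactly two Whittaker data $\frakw_v$ and $\frakw'_v$, related by replacing $\psi_{E_v}(x)$ with $\psi_{E_v}(\beta x)$ for some non-norm $\beta\in\bbQ_v^\times$; when $E_v$ is split the assertion is vacuous, since $\calS_\psi$ is then trivial.

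Next I would recall that $\iota_{\frakw_v}$ is pinned down by the Whittaker-normalized twisted endoscopic character identities relating stable distributions on $\U_{E_v/\bbQ_v}(N)$ to those on the twisted general linear group $\GL_N\rtimes\theta$, and that the case of generic $\psi$ (tempered $L$-parameters) is the base case, from which the general $A$-packet case is inherited through the construction of $A$-packets by parabolic induction and analytic continuation from tempered $L$-packets of smaller unitary groups. Changing $\frakw_v$ to $\frakw'_v$ rescales the Whittaker normalization of the Kottwitz--Shelstad transfer factors by a single character of the component group, independent of the representation; so there is a fixed $\chi\in\Irr(\calS_\psi)$ with $\iota_{\frakw'_v}(\pi)=\iota_{\frakw_v}(\pi)\otimes\chi$ for all $\pi\in\Pi(\psi)$. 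It then remains to identify $\chi$. Using the explicit description of transfer factors for unitary groups from \cite{1999_Kottwitz-Shelstad} (the unitary analogue of the computation carried out for symplectic and orthogonal groups in \cite[\S 2]{Arthur_2013_book}), one checks that $\chi$ is inflated from the quotient $\calS_\psi/\calS_\psi^+$. Since $N$ is odd this quotient is $\bbZ/2\bbZ$, generated by the image of $z_\psi$, so automatically $\chi|_{\calS_\psi^+}=\mathbf{1}$; the content of the transfer-factor computation is that a non-norm twist of $\psi_{E_v}$ makes $\chi$ the \emph{nontrivial} character, i.e.\ $\chi(z_\psi)=-1$. Together these give exactly $\iota_{\frakw'_v}(\pi)(z_\psi)=-\iota_{\frakw_v}(\pi)(z_\psi)$ and $\iota_{\frakw'_v}(\pi)|_{\calS_\psi^+}=\iota_{\frakw_v}(\pi)|_{\calS_\psi^+}$.

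The main obstacle is this last point: showing that the twisting character is nontrivial and is precisely the one dual to $z_\psi$, rather than some other element of $\Irr(\calS_\psi)$. This requires tracking how the Whittaker normalization of transfer factors transforms under a non-norm change of the additive character, which is exactly the delicate ingredient of \cite[Theorem~1.6.1(2)]{KMSW} and rests on the Kottwitz--Shelstad formalism; for the purposes of this paper we simply invoke it.
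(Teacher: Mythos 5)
Your proposal correctly ends by invoking \cite[Theorem~1.6.1(2)]{KMSW}, which is precisely what the paper does: the lemma is stated as a direct citation with no proof supplied. The preliminary sketch you give (two Whittaker data when $E_v$ is a field, twist by a character of $\calS_\psi$ coming from a non-norm change of additive character, and identification of that character as the one dual to $z_\psi$) is a sound summary of the underlying KMSW argument, but it is not part of the paper's own proof, which is simply the citation.
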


We recall the relationship between $A$-parameters and $L$-parameters.
Given an $A$-parameter $\psi$, we define the associated $L$-parameter by
\[
\phi_\psi(w) \defeq \psi\left(w, 
\begin{pmatrix}
    |w|^{1/2}&\\
    &|w|^{-1/2}
\end{pmatrix}\right), \qquad w \in L_E.
\]
Here, $|\cdot|$ is the norm map defined by
\[
|\cdot| \colon L_E \twoheadrightarrow L_E^\ab \isom E^\times \xrightarrow{|\cdot|_E} \bbR_{>0}.
\]
By construction, the canonical map
\[
\calS_\psi \longrightarrow \calS_{\phi_\psi}
\]
is subjective, which implies that the dual map $\Irr(\calS_{\phi_\psi}) \rightarrow \Irr(\calS_\psi)$ is injective.
In particular, when $\psi$ is generic, the corresponding $L$-parameter coincides with the restriction $\psi|_{L_{\bbQ_v}}$.
Then, the $A$-packet is the same as $\Pi(\phi_\psi)$ together with $\iota_{\frakw_v}$ induced from the fixed Whittaker datum $\frakw_v$.

\begin{lem}[{\cite[Proposition 8.4.1]{Mok_2015}}]
\label{lem:commutative diagram L and A packets}
    Let $\psi$ be an $A$-parameter and $\phi_\psi$ be the associated $L$-parameter.
    Then, the following diagram commutes:
    \[
    \begin{tikzcd}
        \Pi(\psi) \arrow[r] \arrow[rightarrow]{d} & \Pi(\phi_\psi) \arrow[d]\\
        \calS_\psi \arrow[r]& \calS_{\phi_\psi}
    \end{tikzcd}
    \]
\end{lem}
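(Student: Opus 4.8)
The statement is \cite[Proposition 8.4.1]{Mok_2015}, so the plan is to indicate the shape of that argument and how it degenerates in the case we actually use. First I would unwind what the diagram asserts: the vertical arrows are the Whittaker-normalized parametrizations $\iota_{\frakw_v}$ — the one for the $A$-packet $\Pi(\psi)$ furnished by Arthur's endoscopic classification (\cite{KMSW}, \cite[Section 8]{Mok_2015}) with target $\Irr(\calS_\psi)$, and the one for the $L$-packet $\Pi(\phi_\psi)$ of Proposition \ref{prop_LLC_unitary} with target $\Irr(\calS_{\phi_\psi})$ — the lower horizontal arrow is the injection $\Irr(\calS_{\phi_\psi}) \hookrightarrow \Irr(\calS_\psi)$ dual to the canonical surjection $\calS_\psi \twoheadrightarrow \calS_{\phi_\psi}$ induced by $w \mapsto \psi(w, \diag(|w|^{1/2}, |w|^{-1/2}))$, and the upper horizontal arrow sends a $\pi \in \Pi(\psi)$ whose character factors through $\calS_{\phi_\psi}$ to the corresponding element of $\Pi(\phi_\psi)$. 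So what must be shown is that, under this correspondence, $\langle \cdot, \pi\rangle_\psi$ is pulled back from $\langle \cdot, \bar\pi\rangle_{\phi_\psi}$ along $\calS_\psi \twoheadrightarrow \calS_{\phi_\psi}$.

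The route I would take is to compare the spectral endoscopic character identities that characterize both packets. For each semisimple $s \in \calS_\psi$ with image $\bar s \in \calS_{\phi_\psi}$, the endoscopic datum of $\U_{E_v/\bbQ_v}(N)$ attached to $(\psi, s)$ — a product of smaller quasi-split unitary and general linear groups — is, by the construction of $\phi_\psi$ out of $\psi$ through the Arthur $\SL_2(\bbC)$, compatible with the one attached to $(\phi_\psi, \bar s)$: the stable distribution indexed by $\psi$ on the endoscopic group has its Arthur $\SL_2$ absorbed into the infinitesimal character, so that its Langlands--Shelstad transfer is read off from the transfer of the corresponding tempered stable character. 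Feeding this into the description of $\Pi(\psi)$ by parabolic induction from $L$-packets on Levi subgroups, together with the normalized intertwining operators $R_P$, and using that passage to Langlands quotients intertwines the two families of identities, I would deduce $\langle s, \pi\rangle_\psi = \langle \bar s, \bar\pi\rangle_{\phi_\psi}$ for $\pi$ mapping to $\bar\pi$, which is exactly the claimed commutativity; the normalizations agree because both $\iota_{\frakw_v}$'s are pinned down by the same Whittaker datum $\frakw_v$.

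In the only case in which we invoke the lemma, $\psi$ is generic, i.e. $\psi|_{\{1\}\times\SL_2(\bbC)}$ is trivial; then $\phi_\psi = \psi|_{L_{\bbQ_v}}$, the images of $\psi$ and $\phi_\psi$ coincide, so $\calS_\psi = \calS_{\phi_\psi}$ and both horizontal arrows become identities. The statement then collapses to the identification of the $A$-packet $\Pi(\psi)$ with its labeling and the $L$-packet $\Pi(\phi_\psi)$ with its labeling, which is built into the definition of $A$-packets for generic parameters in \cites{Arthur_2013_book,Mok_2015,KMSW}. The hard part, and the reason we simply cite \cite[Proposition 8.4.1]{Mok_2015} rather than reprove this, is that the comparison of character identities above rests on the full stabilization of the twisted trace formula and the inductive construction of the $A$-packets; the only thing one checks directly here is the elementary fact that the $\phi_\psi$ defined by the displayed formula is the $L$-parameter attached to $\psi$, which is immediate from the definitions.
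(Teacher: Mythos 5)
The paper gives no proof of this lemma: it is cited verbatim from Mok's Proposition 8.4.1 and no argument is supplied, so your approach of deferring to that reference matches the paper's. Your sketch of what Mok actually does — comparing the spectral endoscopic character identities that characterize $\Pi(\psi)$ and $\Pi(\phi_\psi)$, with the Arthur $\SL_2(\bbC)$ passed into the Langlands quotient and both labelings pinned down by the same Whittaker datum $\frakw_v$ — is a fair high-level account, and you correctly identify that the reason to cite rather than reprove is that those identities rest on the stabilized (twisted) trace formula and the inductive construction of $A$-packets. The most useful observation in your proposal for this particular paper is the last one: at every place the lemma is invoked (in the proof of the lemma computing $\calS_{\psi_{n,v}}$ at finite places, for the local components of the global parameter $\psi_n$ from Table 4), the parameter $\psi$ is generic, so $\phi_\psi = \psi|_{L_{\bbQ_v}}$, the images of $\psi$ and $\phi_\psi$ coincide, $\calS_\psi = \calS_{\phi_\psi}$ canonically, and the claimed commutativity collapses to the definitional fact that the $A$-packet of a generic parameter \emph{is} the $L$-packet of the associated $L$-parameter together with its Whittaker-normalized labeling. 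That is worth keeping in mind: for the applications at hand, none of the heavy machinery behind Mok's Proposition 8.4.1 is actually being exercised.
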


\subsection{Arthur's multiplicity formula}
\label{subsec:Arthur's multiplicity formula}
    In this subsection, we review Arthur's multiplicity formula, following the exposition in \cite{KMSW}.
By the Kottwitz map (\ref{Kottwitz_map}), we obtain the character $\alpha_v(h_v)$ of $X^*(Z(\widehat{\U}_{E_v/\bbQ_v}(N))^{\Gal(\overline{\bbQ_v}/\Q_v)})$ and the product formula $\prod_v \alpha_v(h_v) = \mathbf{1}$.
Note that if $v$ is nonsplit, the group $Z(\widehat{\U}_{E_v/\bbQ_v}(N))^{\Gal(\overline{\Q_v}/\Q_v)}$ is isomorphic to $\bbZ/2\bbZ$ and the nontrivial element corresponds to $z_\psi$.
Note that $\alpha_v(h_v)$ is trivial if and only if $h_v$ is equivalent to the Hermitian form $h_{E_v/\bbQ_v}(N)$.

Let $\pi= \bigotimes_v'\pi_v$ be a discrete automorphic representation of $\GL_N(\bbA_E)$.
We say that $\pi$ is \emph{conjugate self-dual} if the associated $L$-parameter $\phi_v$ of $\pi_v$ is conjugate self-dual for any $v$.
Let $\pi$ be a conjugate self-dual cuspidal automorphic representation of $\GL_N(\bbA_E)$.
We say that $\pi$ has \emph{sign $+1$ (resp.~$-1$)} if the Asai $L$ function $L(s,\pi, \As^+)$ (resp.~$L(s, \pi, \As^-)$) has a pole at $s=1$ (\cite[Theorem 2.5.4]{Mok_2015}).
We define a \emph{global $A$-parameter} $\psi$ of $G$ as a formal sum
    \[
    \psi = \bigoplus_i \Phi_i \boxtimes S_{d_i}
    \]
  with the following properties;
    \begin{itemize}
        \item $\Phi_i$ is a conjugate-selfdual cuspidal automorphic representation of $\GL_{N_i}(\bbA_E)$ with sign $(-1)^{N-d_i}$, 
        \item $S_{d_i}$ is the unique irreducible representation of $\SL_2(\bbC)$ of dimension $d_i$,
        \item $(\Phi_i,d_i) \neq (\Phi_j,d_j)$ if $i \neq j$, and 
        \item $\sum_i d_iN_i = N$.
    \end{itemize}
We define the component group $\calS_\psi$ as the free two-group $\bigoplus(\bbZ/2\bbZ) e_{i,\bbA}$ with the basis $e_{i,\bbA}$, whose symbol corresponds to the symbol $\Phi_i \boxtimes S_{d_i}$.
We say that $\psi= \bigoplus_i \Phi_i \boxtimes S_{d_i}$ is \emph{generic} if $d_i =1$ for any $i$.
We can attach a character $\vep_\psi$ of $\calS_\psi$ as in \cite[Section 2.5]{Mok_2015} and \cite[Lemma 8.5]{Ichino_2022}.

Recall the canonical map $\Delta_v \colon \calS_\psi \rightarrow \calS_{\psi_v}$ following \cite[Section 1.3.5]{KMSW} and \cite[(2.4.23)]{Mok_2015}.
The cuspidal automorphic representation $\Phi_i$ decomposes into the restricted tensor product $\Phi_i = \bigotimes'_w \Phi_{i,w}$, where $w$ runs over all places $w$ of $E$.
For each place $v$ of $\Q$ that is nonsplit in $E$, let $w$ denote the place of $E$ lying above $v$, and let $\phi_{i,v}$ be the $L$-parameter of $\Phi_{i,w}$, as described in Remark~\ref{rem:L-parameters and base change}.
For split places $v$, we fix a choice of a place $w$ of $E$ lying above $v$, and define $\phi_{i,v}$ in the same way.
According to these choices, we define the $A$-parameter $\psi_v \defeq \bigoplus_i \phi_{i,v} \boxtimes S_{d_i}$.
of $\U_{E_v/\Q_v}(N)$, which is called the \emph{$v$-component} of $\psi$.
Take the decomposition $\psi_v = \bigoplus_{i=1}^r m_i \rho_{i,v} \oplus \psi_{v}'$ such that $\rho_i$ are conjugate self-dual representations with sign $+1$ and $\psi_{v}'$ is the direct sum of representations of other types.
Recall that $\calS_{\psi_v}$ is the elementary two-group defined by
\[
\calS_{\psi_v} = (\bbZ/2\bbZ) e_{1,v} \oplus \cdots \oplus (\bbZ/2\bbZ) e_{r,v}
\]
where $e_{i,v}$ is the symbol corresponding to $\rho_{i,v}$.
The localization of $\Phi_j$ gives a direct sum $\bigoplus_{i} m_{i,j} \rho_{i,v} \oplus \psi_{v}''$ with $m_{i,j} \leq m_i$ such that $\psi_v''$ is a subrepresentation of $\psi_v'$.
The map $\Delta_v$ is then defined by
\begin{align}
\label{eq:Delta_computation}
    \Delta_v(e_{j,\bbA}) \defeq \sum_{i} m_{i,j} e_{i,v},
\end{align}
and we put $\Delta \defeq \prod_v \Delta_v$.
Let $\Pi(\psi_v)$ denote the $A$-packet associated to the $A$-parameter $\psi_v$.
Let $\eta = \bigotimes_v \eta_v$ be a character of $\prod_v \calS_{\psi_v}$ with $\eta_v$ is trivial for almost all $v$.
We define the global representation
\[
\pi_{(\psi,\eta)} = \sideset{}{'}\bigotimes_v \pi_{(\psi_v, \eta_v)},
\]
where $\pi_{(\psi_v, \eta_v)} \in \Pi(\psi_v)$ is the local representation corresponding to $\eta_v$ via the inclusion $\iota_\frakw \colon \Pi(\psi) \hookrightarrow \Irr(\calS_{\psi})$.

We are now ready to state Arthur’s multiplicity formula:

\begin{thm}[{\cite[Theorem 2.5.2]{Mok_2015}} for quasi-split unitary groups, {\cite[Theorem 1.7.1]{KMSW}} in general]\label{AMF}
    Let $(L,h)$ be a Hermitian lattice of rank $N = n+1$ and $\bbG$ be the corresponding reductive group scheme over $\bbZ$.
    We then have the decomposition
    \[
    \calA^2_{\mathrm{disc}}(\bbG(\bbQ) \bs \bbG(\bbA)) \isom \bigoplus_{\psi} \bigoplus_{\eta} \pi_{(\psi, \eta)},
    \]
    where $\psi$ runs over all global $A$-parameters and $\eta$ runs over all characters of $\prod_v \calS_{\psi_v}$ such that $\eta_v$ is trivial for almost all $v$, $\eta \circ \Delta = \vep_\psi$ and 
    \[
    \eta_v(z_{\psi_v})
    =
    \begin{cases}
        +1 & \text{if $h_v$ is equivalent to $h_{E_v/\bbQ_v}(N)$;}\\
        -1 & \text{otherwise.}
    \end{cases}
    \]
\end{thm}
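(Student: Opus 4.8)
The plan is to deduce this statement directly from the endoscopic classification of the discrete automorphic spectrum of unitary groups, namely \cite[Theorem 2.5.2]{Mok_2015} when $\bbG$ is quasi-split and \cite[Theorem 1.7.1]{KMSW} in general; the only work is to check that $\bbG = \U(L,h)$ fits that framework and to translate the conventions there into the notation fixed in Sections \ref{section_unitary_groups}--\ref{section_LLC_AC}. First I would record that, because $N = n+1$ is odd, every inner form of the quasi-split group $\U_{E/\bbQ}(N)$ is pure, i.e.\ arises from a Hermitian space over $E$ (Subsection \ref{subsection_pure_inner_form}); thus $\bbG$ is precisely one of the groups handled in \cite{KMSW}. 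The global $A$-parameters $\psi = \bigoplus_i \Phi_i \boxtimes S_{d_i}$ introduced in Subsection \ref{subsec:Arthur's multiplicity formula}, together with the component group $\calS_\psi$ and the character $\vep_\psi$, are the formal objects of \cite[Section 1.5]{KMSW} and \cite[Section 2.5]{Mok_2015}; the requirement that each $\Phi_i \boxtimes S_{d_i}$ be conjugate self-dual with sign $(-1)^{N - d_i}$ is exactly the condition that $\psi$ be an $A$-parameter for $\U_{E/\bbQ}(N)$, i.e.\ conjugate self-dual with sign $(-1)^{N-1}$, once one uses the rule recalled in Subsection \ref{L-A-parameter} that twisting by $S_{d_i}$ multiplies the sign by $(-1)^{d_i-1}$.

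Next I would match the local data. For each place $v$, localizing the $\Phi_i$ and applying the local Langlands correspondence (Propositions \ref{prop_LLC_GL} and \ref{prop_LLC_unitary}) yields the $v$-component $\psi_v$, its $A$-packet $\Pi(\psi_v)$, and the embedding $\iota_{\frakw_v} \colon \Pi(\psi_v) \hookrightarrow \Irr(\calS_{\psi_v})$ normalized by the Whittaker datum $\frakw_v$ fixed in Section \ref{section_unitary_groups}; the map $\Delta_v \colon \calS_\psi \to \calS_{\psi_v}$ is the one in \eqref{eq:Delta_computation}. Since $N$ is odd one has the splitting $\calS_{\psi_v} = \calS_{\psi_v}^+ \oplus (\bbZ/2\bbZ) z_{\psi_v}$, and $z_{\psi_v}$ is the image of the nontrivial element of $Z(\widehat{\U}_{E_v/\bbQ_v}(N))^{\Gal(\overline{\bbQ_v}/\bbQ_v)} \cong \bbZ/2\bbZ$, which by the Kottwitz map \eqref{Kottwitz_map} classifies the relevant pure inner form. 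Hence the condition selecting, among the members of the Vogan packet, those that are genuinely representations of $\bbG(\bbQ_v)$ rather than of the other pure inner form reads $\eta_v(z_{\psi_v}) = \alpha_v(h_v)$, and $\alpha_v(h_v) = +1$ exactly when $h_v$ is equivalent to $h_{E_v/\bbQ_v}(N)$; this is the second displayed condition in the theorem.

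With these dictionaries in hand, \cite[Theorem 1.7.1]{KMSW} (respectively \cite[Theorem 2.5.2]{Mok_2015} when $\bbG$ is quasi-split) asserts that $\pi_{(\psi,\eta)} = \bigotimes'_v \pi_{(\psi_v,\eta_v)}$ contributes, with multiplicity one, to $\calA^2_{\disc}(\bbG(\bbQ) \bs \bbG(\bbA))$ exactly when $\eta_v$ is trivial for almost all $v$, the local sign conditions above hold at every $v$, and the global coherence relation $\eta \circ \Delta = \vep_\psi$ is satisfied; summing over all admissible pairs $(\psi,\eta)$ then gives the claimed decomposition. The step that will require real care, as opposed to bookkeeping, is the compatibility of normalizations: \cite{KMSW} phrases the classification using rigid inner twists and the cohomology that governs the normalizations $\iota_{\frakw_v}$, so one must verify that the pure-inner-form invariant $\alpha_v(h_v)$ and the globally fixed additive character $\psi_E$ are compatibly normalized across \cite[Sections 1.6--1.7]{KMSW}. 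Here the sign correction to \cite[Section 0.3.3]{KMSW} recorded in the footnote of Subsection \ref{subsection_pure_inner_form}, the product formula $\prod_v \alpha_v(h_v) = \mathbf{1}$ of Remark \ref{rem:local-global compatibility}, and the Whittaker-dependence statement of Lemma \ref{lemma_dependence_choice_Whittaker_datum} are precisely what pin the signs down.
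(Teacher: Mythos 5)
Your proposal is correct and takes the same approach as the paper: the statement is cited directly from Mok's Theorem~2.5.2 and KMSW's Theorem~1.7.1, with no independent proof given beyond the translation of conventions. The paper only adds a remark that those references rely on Arthur's unpublished manuscripts, with the required inputs now largely established in \cite{AGIKMS_LIR_2024}; your more detailed bookkeeping of the sign normalizations, the Kottwitz map, and the odd-$N$ pure inner form parametrization is consistent with what the paper assumes implicitly.
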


\begin{rem}
In the proofs of Theorem~\ref{AMF} found in \cites{Arthur_2013_book, KMSW, Mok_2015}, certain arguments rely on results from Arthur’s unpublished manuscripts. Recently, significant progress has been made in this direction: most of the required results have now been established in \cite{AGIKMS_LIR_2024}. 
For further discussion, we refer the reader to the end of the introduction in \cite{AGIKMS_LIR_2024}.
\end{rem}

\section{\texorpdfstring{$A$}{A}-packets for real unitary group}
\label{section:Archimedean local A-packets}
In this section, we introduce M{\oe}glin-Renard's explicit description of the $A$-packets for real unitary groups, as developed in \cite{MR_consequence}, and the classification of $A$-parameters containing lowest weight representations of unitary groups $\U(1,n)$ due to \cite{Horinaga_LW_unitary}.
Throughout this section, we work on the case where $v$ is the infinite place. Unless otherwise stated, we omit the symbol $v$.

\subsection{Preliminaries}
In this subsection, we recall basic facts of unitary lowest weight representations and (limits of) discrete series representations of $G = \U(1,n)$.
Applying the results of \cite{MR_consequence}, we determine the $A$-packets containing unitary lowest weight representations and the corresponding character of the component group.
We define the Cartan involution $\theta$ of $G$ by
\[
\theta(g) \defeq \diag(1, -\mathbf{1}_n) \overline{^t g^{-1}} \diag(1, -\mathbf{1}_n).
\]
The group consisting of fixed points by $\theta$ is a maximal compact subgroup $K_{\infty}$ of $G$, which is isomorphic to $\U(1) \times \U(n)$.
Then we obtain the Cartan decomposition
\[
\frakg = \frakk \oplus \frakp,\quad \frakp_\bbC = \frakp_+ \oplus \frakp_-.
\]
Here, $\frakp_+$ (resp.~$\frakp_-$) corresponds to the holomorphic (resp.~anti-holomorphic) tangent space of the Hermitian symmetric domain $G/K_{\infty}\cong\B^n$.
We choose a compact Cartan subalgebra $T$ of $G$ as the group of diagonal matrices.
Let $e_i$ denote the standard characters of $T$ defined by $e_i(\diag(t_1,\ldots,t_n)) = t_i$.
The root systems of $G$ and $K_{\infty}$ with respect to $T$ are given by
\begin{align*}
    \triangle \defeq \triangle(\frakg_\bbC, \frakt_\bbC) 
    &=
    \{e_i - e_j \mid 1 \leq i < j \leq n+1\},\\
    \triangle_{\mathrm{c}} \defeq \triangle(\frakk_\bbC, \frakt_\bbC) &=
    \{e_i - e_j \mid \text{$2 \leq i < j \leq n+1$} \}.
\end{align*}
We define the set of noncompact roots by $\triangle_{\mathrm{nc}} \defeq \triangle \smallsetminus \triangle_{\mathrm{c}}$.
We call a root in $\triangle_{\mathrm{c}}$ a \emph{compact root} and a root in $\triangle_{\mathrm{nc}}$ a \emph{noncompact root}.
We identify $\frakt_\bbC^*$ as $\bbC^{n+1}$ via the basis $\{e_1, \ldots,e_{n+1}\}$.

Let $M$ be a $\bbC$-vector space equipped with an action of $\calU(\frakg)$ and $K_{\infty}$.
We say that $M$ is a \emph{$(\frakg_\bbC, K_{\infty})$-module} if the actions satisfy the compatibility condition:
\[
k \cdot X \cdot v = \Ad(k)(X) \cdot k \cdot v
\]
for any $k \in K_{\infty}, X \in \frakg$ and $v \in M$.
Let $\pi$ be a $(\frakg_\bbC, K_{\infty})$-module.
The representation $\pi$ is called \emph{admissible} if the space $\Hom_{K_\infty}(\pi|_{K_\infty}, \tau)$ is finite dimensional for any irreducible representation $\tau$ of $K_\infty$.
The restriction of an irreducible admissible $(\frakg_\bbC, K_{\infty})$-module $\pi$ to $\calZ(\frakg_\bbC)$ yields a character of $\calZ(\frakg_\bbC)$, called the \emph{infinitesimal character} of $\pi$.
It is parametrized by $\bbC^{n+1}/\frakS_{n+1}$ by \cite[Theorem 8.18]{2001_Knapp}.
Let $\chi_\lambda$ denotes the infinitesimal character corresponding to $\lambda \in \bbC^{n+1}/\frakS_{n+1}$. 
The parameter $\lambda$ is called the \emph{Harish-Chandra parameter}.
We say that $\chi_{\lambda}$ is \emph{integral} if $\lambda\in\bbZ^{n+1} + n/2$.
In this paper, we mainly consider $(\frakg_\bbC, K_{\infty})$-modules with integral infinitesimal characters.

\subsection{(Limits of) Discrete series representations}
In this subsection, we recall the theory of discrete series representations and their limits following \cite[Chapter IX]{2001_Knapp} and \cite[Chapter XI Section 8]{Knapp-Vogan}.
Let $\lambda \in \frakt^*_\bbC$.
We say that $\lambda$ is \emph{nonsingular} if $\langle \lambda, \alpha^\vee \rangle \neq 0$ for any $\alpha \in \triangle$; otherwise, we call it \emph{singular}.
Given a weight $\lambda$, let $\chi_\lambda$ denote the infinitesimal character with the Harish-Chandra parameter $\lambda$.
A weight $\lambda$ is called \emph{analytically integral} if $\lambda \in \frakt_\bbC^*$ equals to the derivative of a character of $T$ and $\lambda(H) \in 2\pi\sqrt{-1}\,\bbZ$ for any $H \in \frakt$ with $\exp(H) = 1$.

\begin{thm}[{\cite{2001_Knapp}*{Theorem~9.20}}]\label{def_DS}
    Let $\lambda \in \frakt_\bbC^*$ be a nonsingular integral weight.
    \begin{enumerate}
        \item A set $\triangle^+ = \{\alpha \in \triangle \mid \langle \lambda, \alpha^\vee\rangle > 0\}$ defines a positive system of $\triangle$.
        \item Let $\rho, \rho_{\mathrm{c}}$ and $\rho_{\mathrm{nc}}$ be half the sum of roots in $\triangle^+, \triangle^+\cap \triangle_{\mathrm{c}}$ and $\triangle^+\cap\triangle_{\mathrm{nc}}$, respectively.
        If $\lambda + \rho$ is analytically integral, there exists a unique representation $\pi_\lambda$ such that
        \begin{itemize}
            \item $\pi_\lambda$ has the infinitesimal character $\chi_\lambda$,
            \item the $K_{\infty}$-type with highest weight $\lambda+\rho_{\mathrm{nc}} - \rho_{\mathrm{c}}$ occurs in $\pi_\lambda$ with multiplicity one, and 
            \item all the highest weights $\Lambda$ as a representation of $K_{\infty}$ occur in $\pi_\lambda$ are of the form
            \[
            \text{$\Lambda = \lambda + \rho_{\mathrm{nc}} -\rho_{\mathrm{c}} + \sum_{\alpha \in \triangle^+} n_\alpha \alpha$ with $n_\alpha \in \bbZ_{\geq0}$}.
            \]
        \end{itemize}
        \item Two such representations $\pi_\lambda$ and $\pi_{\lambda'}$ are isomorphic if and only if the weights $\lambda$ and $\lambda'$ are conjugate under the Weyl group $W_{K_{\infty}}$ of $K_{\infty}$.
    \end{enumerate}
\end{thm}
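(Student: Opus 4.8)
This is Harish-Chandra's construction and parametrization of (limits of) discrete series, so the plan is to recover the three assertions in turn, with (2) as the heart of the matter. For (1), nonsingularity of $\lambda$ gives $\langle\lambda,\alpha^\vee\rangle\ne 0$ for every $\alpha\in\triangle$, so $\triangle=\triangle^+\sqcup(-\triangle^+)$; since $\lambda$ is integral it has real coordinates in the basis $\{e_1,\dots,e_{n+1}\}$ and hence defines a regular real functional on the span of the coroots, and $\triangle^+$ is precisely the set of roots positive on it. That $\triangle^+$ is a positive system is then formal: closure under addition follows from additivity of $\alpha\mapsto\langle\lambda,\alpha\rangle$ together with the fact that all roots of $\U(1,n)$ have equal length, so that $\langle\lambda,(\alpha+\beta)^\vee\rangle$ is a positive multiple of $\langle\lambda,\alpha\rangle+\langle\lambda,\beta\rangle$ whenever $\alpha+\beta\in\triangle$.

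For (2), existence is the substantive step. I would realize $\pi_\lambda$ by cohomological induction from the $\theta$-stable Borel $\frakb=\frakt_\bbC\oplus\frakn$ attached to $\triangle^+$: with $S\defeq\dim_\bbC(\frakn\cap\frakk_\bbC)$, the degree-$S$ Zuckerman derived functor applied to the one-dimensional module $\bbC_\lambda$ of weight $\lambda$ (in the appropriate $\rho$-shifted normalization) is, because the inducing parameter is regular, nonzero, irreducible and unitarizable, with infinitesimal character $\chi_\lambda$. A bottom-layer $K$-type computation then identifies its lowest $K_\infty$-type as the irreducible $K_\infty$-representation of highest weight $\lambda+\rho_{\mathrm{nc}}-\rho_{\mathrm{c}}$, occurring with multiplicity one, and the Blattner-type branching formula gives the asserted shape $\Lambda=\lambda+\rho_{\mathrm{nc}}-\rho_{\mathrm{c}}+\sum_{\alpha\in\triangle^+}n_\alpha\alpha$ with $n_\alpha\in\bbZ_{\ge 0}$ for every $K_\infty$-highest weight. (As a cross-check one may instead use Harish-Chandra's original character-theoretic construction on the regular set of $T$, or Schmid's realization on $L^2$-cohomology of a line bundle over $G/T$.) For uniqueness I would invoke that an irreducible admissible $(\frakg_\bbC,K_\infty)$-module is pinned down by its (regular) infinitesimal character together with a lowest $K_\infty$-type occurring with multiplicity one --- Vogan's lowest-$K$-type theory, or, concretely, the finiteness of the set of irreducibles with infinitesimal character $\chi_\lambda$ together with an inspection of their lowest $K_\infty$-types.

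For (3), if $\lambda'=w\lambda$ with $w\in W_{K_\infty}$ then $w$ permutes $\triangle_{\mathrm{c}}$, so $\triangle^+_{\lambda'}=w\triangle^+_\lambda$ and $\rho_{\mathrm{c}},\rho_{\mathrm{nc}}$ transform equivariantly; hence the lowest $K_\infty$-type highest weight of $\pi_{\lambda'}$ is $w(\lambda+\rho_{\mathrm{nc}}-\rho_{\mathrm{c}})$, which is $W_{K_\infty}$-conjugate to that of $\pi_\lambda$, and $\chi_{\lambda'}=\chi_\lambda$, so $\pi_{\lambda'}\cong\pi_\lambda$ by the uniqueness in (2). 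Conversely an isomorphism $\pi_\lambda\cong\pi_{\lambda'}$ forces $\chi_\lambda=\chi_{\lambda'}$, hence $\lambda'=w\lambda$ for some $w$ in the full Weyl group, and equality of lowest $K_\infty$-types then forces, after a short dominance bookkeeping, $w$ to be realizable inside $W_{K_\infty}$.

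The step I expect to be the real obstacle is the existence half of (2): actually producing the representation and proving its irreducibility and unitarity is the deep ingredient --- whether packaged via Harish-Chandra's characters, Schmid's $L^2$-cohomology, or Vogan--Zuckerman cohomological induction --- and in practice I would quote it (this is precisely Knapp, Theorem~9.20) rather than reprove it; the remaining bookkeeping with $K_\infty$-types, infinitesimal characters and Weyl-group conjugacy is then formal.
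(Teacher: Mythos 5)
The paper does not prove this theorem; it is stated as a direct citation of Knapp \cite{2001_Knapp}*{Theorem~9.20}. Your proposal correctly outlines the standard Harish-Chandra/Vogan--Zuckerman proof of that theorem and then concludes, as you acknowledge at the end, by quoting Knapp rather than reproving it --- so it lands on essentially the same approach as the paper, with some additional (and accurate) exposition of the underlying construction.
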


We call the representation $\pi_{\lambda}$ introduced in Theorem \ref{def_DS} a \emph{discrete series representation} of $G$.
For such a representation $\pi_\lambda$, the infinitesimal character has the Harish-Chandra parameter $\lambda$.
Note that discrete series representations appear discretely in $L^2(G)$, and conversely, any irreducible admissible representation occurring in $L^2(G)$ as a subrepresentation is a discrete series representation.
It is well known that holomorphic discrete series representations correspond to those $\lambda$ for which the associated positive system $\triangle^+$ is of the form
\[
\triangle^+ = \{e_i - e_j \mid 1 \leq i < j \leq n+1 \}.
\]
Thus, the Harish-Chandra parameter of holomorphic discrete series representations $\pi_\lambda$ determines a positive system containing $\triangle(\frakp_+, \frakt_\bbC)$.
We next consider the limits of discrete series representations by allowing $\lambda$ to approach the walls.

\begin{thm}[{\cite{2001_Knapp}*{Theorem~12.26, Corollary~12.27}}]\label{def_LDS}
    Let $\lambda \in \frakk_\bbC^*$ be a singular weight.
    Choose a positive system $\triangle^+$ of $\triangle$ so that $\langle \lambda, \alpha^\vee\rangle \geq 0$ for any $\alpha \in \triangle^+$.
    \begin{enumerate}
        \item 
    If $\langle \lambda, \alpha^\vee\rangle > 0$ for any $\alpha \in \triangle_{\mathrm{c}} \cap \triangle^+$ and $\lambda+\rho$ is analytically integral, there exists a unique representation $\pi_{(\lambda, \triangle+)}$ such that
    \begin{itemize}
            \item $\pi_{(\lambda, \triangle)}$ has infinitesimal character $\chi_\lambda$,
            \item The $K_{\infty}$-type with highest weight $\lambda+\rho_{\mathrm{nc}} - \rho_{\mathrm{c}}$ occurs in $\pi_{(\lambda, \triangle^+)}$ with multiplicity one, and
            \item All the highest weights $\Lambda$ occur in $\pi_{(\lambda, \triangle)}$ are of the form
            \[
            \text{$\Lambda = \lambda + \rho_{\mathrm{nc}} -\rho_{\mathrm{c}} + \sum_{\alpha \in \triangle^+} n_\alpha \alpha$ with $n_\alpha \in \bbZ_{\geq0}$}.
            \]
    \end{itemize}
    \item For pairs $(\lambda, \triangle^+)$ and $(\lambda, \triangle'^+)$ above, two such representations $\pi_{(\lambda, \triangle^+)}$ and $\pi_{(\lambda', \triangle'^+)}$ are isomorphic if and only if the pairs $(\lambda, \triangle^+)$ and $(\lambda', \triangle'^+)$ are conjugate under $W_{K_{\infty}}$.
    \end{enumerate}
\end{thm}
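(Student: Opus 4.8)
The plan is to realize $\pi_{(\lambda,\triangle^+)}$ as a coherent continuation to the wall of a genuine discrete series, in the style of Zuckerman and Schmid. Fix the positive system $\triangle^+$ as in the statement. First I would choose an auxiliary weight $\nu\in\frakt_\bbC^*$ lying in the root lattice of $\triangle$, strictly dominant for $\triangle^+$, and small enough that $\lambda_\nu\defeq\lambda+\nu$ is nonsingular with $\triangle^+=\{\alpha\in\triangle\mid\langle\lambda_\nu,\alpha^\vee\rangle>0\}$; since $\lambda+\rho$ is analytically integral and $\nu$ lies in the root lattice, $\lambda_\nu+\rho$ is analytically integral as well, so Theorem~\ref{def_DS} produces the discrete series representation $\pi_{\lambda_\nu}$, with minimal $K_{\infty}$-type of highest weight $\lambda_\nu+\rho_{\mathrm{nc}}-\rho_{\mathrm{c}}$ and all $K_{\infty}$-types of highest weight in $\lambda_\nu+\rho_{\mathrm{nc}}-\rho_{\mathrm{c}}+\sum_{\alpha\in\triangle^+}\bbZ_{\geq0}\,\alpha$.

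Next I would apply Zuckerman's translation functor $\mathscr{T}^{\lambda}_{\lambda_\nu}$ — tensor $\pi_{\lambda_\nu}$ with the irreducible finite-dimensional $G$-module of extreme weight $-\nu$ and project onto the generalized $\chi_\lambda$-eigenspace of $\calZ(\frakg_\bbC)$ — and define $\pi_{(\lambda,\triangle^+)}\defeq\mathscr{T}^{\lambda}_{\lambda_\nu}\pi_{\lambda_\nu}$. On the level of $K_{\infty}$-types the functor acts by tensoring with the $K_{\infty}$-restriction of that finite-dimensional module and projecting, so a weight-counting argument with extreme weights shows: under the hypothesis $\langle\lambda,\alpha^\vee\rangle>0$ for every $\alpha\in\triangle_{\mathrm{c}}\cap\triangle^+$, the $K_{\infty}$-type $\lambda_\nu+\rho_{\mathrm{nc}}-\rho_{\mathrm{c}}$ is carried to $\lambda+\rho_{\mathrm{nc}}-\rho_{\mathrm{c}}$ with multiplicity exactly one and survives the projection, and every surviving $K_{\infty}$-type has highest weight of the asserted form $\lambda+\rho_{\mathrm{nc}}-\rho_{\mathrm{c}}+\sum_{\alpha\in\triangle^+}n_\alpha\alpha$ with $n_\alpha\in\bbZ_{\geq0}$. (Were the compact-root positivity to fail, the minimal $K_{\infty}$-type would be annihilated and $\mathscr{T}^{\lambda}_{\lambda_\nu}\pi_{\lambda_\nu}=0$; this dichotomy is exactly the reason for the hypothesis in (1).) The serious input is that $\pi_{(\lambda,\triangle^+)}$ is \emph{irreducible} when nonzero: this I would deduce either by checking that the coherent continuation of the Harish-Chandra character formula for $\pi_{\lambda_\nu}$ to the singular parameter $\lambda$ remains the character of a single irreducible module, or, equivalently, from Vogan's classification of irreducible $(\frakg_\bbC,K_{\infty})$-modules by their infinitesimal character together with a minimal $K_{\infty}$-type — the data just described pin down a unique candidate. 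I expect this irreducibility-versus-vanishing step, governed precisely by the compact-root hypothesis, to be the main obstacle; the rest is bookkeeping with weights.

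For the uniqueness in (1), I would argue that any irreducible admissible $(\frakg_\bbC,K_{\infty})$-module with infinitesimal character $\chi_\lambda$ containing $\lambda+\rho_{\mathrm{nc}}-\rho_{\mathrm{c}}$ as a $K_{\infty}$-type of multiplicity one, with all $K_{\infty}$-types of the stated shape, has $\lambda+\rho_{\mathrm{nc}}-\rho_{\mathrm{c}}$ as a minimal $K_{\infty}$-type; Vogan's minimal $K_{\infty}$-type theorem then identifies it with $\mathscr{T}^{\lambda}_{\lambda_\nu}\pi_{\lambda_\nu}$, since the pair (infinitesimal character, minimal $K_{\infty}$-type) together with these constraints leaves no other possibility. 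For (2), the infinitesimal character of $\pi_{(\lambda,\triangle^+)}$ is $\chi_\lambda$, so an isomorphism $\pi_{(\lambda,\triangle^+)}\cong\pi_{(\lambda',\triangle'^+)}$ forces $\lambda$ and $\lambda'$ to be conjugate under the full Weyl group of $\triangle$, and their minimal $K_{\infty}$-types $\lambda+\rho_{\mathrm{nc}}-\rho_{\mathrm{c}}$ and $\lambda'+\rho'_{\mathrm{nc}}-\rho'_{\mathrm{c}}$ to be $W_{K_{\infty}}$-conjugate. A short combinatorial lemma — using that $\triangle^+$ and $\triangle'^+$ are, respectively, $\langle\lambda,-\rangle$- and $\langle\lambda',-\rangle$-nonnegative systems that are strictly positive on the compact roots — then upgrades this to $(\lambda,\triangle^+)\sim_{W_{K_{\infty}}}(\lambda',\triangle'^+)$, giving the converse direction; the forward direction (conjugate data yield isomorphic representations) is immediate, as the construction $\mathscr{T}^{\lambda}_{\lambda_\nu}\pi_{\lambda_\nu}$ is $W_{K_{\infty}}$-equivariant in the data.
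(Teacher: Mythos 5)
The paper does not prove this statement; it is quoted verbatim from Knapp's book (Theorem~12.26 and Corollary~12.27), and that is exactly the reference you are implicitly reconstructing. Your proposal follows the same route as Knapp's own proof: move off the wall by a strictly dominant element $\nu$ of the root lattice to get a genuine discrete series $\pi_{\lambda+\nu}$, apply the Zuckerman translation functor $\mathscr{T}^{\lambda}_{\lambda+\nu}$ back to the wall, and identify the result. You correctly single out the two hard points, namely the dichotomy (the translated module is either zero or irreducible, with the compact-positivity hypothesis $\langle\lambda,\alpha^\vee\rangle>0$ for $\alpha\in\triangle_{\mathrm{c}}\cap\triangle^+$ governing which alternative occurs, and controlling the lowest $K_\infty$-type) and the uniqueness statement, which you handle via Vogan's lowest $K_\infty$-type classification together with the infinitesimal character. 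Part (2) is also handled correctly in outline; the ``short combinatorial lemma'' you invoke — that $W$-conjugacy of $\lambda,\lambda'$ together with $W_{K_\infty}$-conjugacy of the Blattner parameters $\lambda+\rho_{\mathrm{nc}}-\rho_{\mathrm{c}}$, $\lambda'+\rho'_{\mathrm{nc}}-\rho'_{\mathrm{c}}$ can be promoted to $W_{K_\infty}$-conjugacy of the full pairs $(\lambda,\triangle^+)$, $(\lambda',\triangle'^+)$ — is precisely the content of Knapp's Corollary~12.27 and does require checking, but it is the right step to isolate. This is a correct sketch of the standard argument, at the same level of detail as one would expect for a result that the paper simply cites.
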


We refer to the representations $\pi_{(\lambda, \triangle^+)}$ are the \emph{limits of discrete series representations} of $G$.
As explained in \cite[Chapter XI Subsection 8]{Knapp-Vogan}, these representations can also be described in terms of cohomological inductions, which we will discuss in subsequent sections.
The condition $\langle \lambda, \alpha^\vee\rangle > 0$ for any $\alpha \in \triangle_{\mathrm{c}} \cap \triangle^+$ in Theorem \ref{def_LDS} ensures the nonvanishing of the associated cohomological inductions; see \cite[Proposition 11.180]{Knapp-Vogan} for details.

\subsection{Cohomological induction}
Take $x \in \sqrt{-1}\,\frakk$.
Since the action of $\mathrm{ad}(x)$ on $\frakg_\bbC$ is diagonalizable with real eigenvalues, we define the subalgebras of $\frakg_\bbC$ as
\begin{align*}
    \frakq\defeq\frakq(x) 
    &= \text{sum of root vectors with nonnegative eigenvalues}.\\
    \fraku 
    &\defeq \text{sum of root vectors with positive eigenvalues}.\\
    \frakl 
    &\defeq \text{sum of root vectors with zero eigenvalues}.
\end{align*}
We call parabolic subalgebras $\frakq$ of $\frakg_\bbC$ obtained in the above way the \emph{$\theta$-stable parabolic subalgebras} of $\frakg_\bbC$.
Without loss of generality, we may assume $x \in \sqrt{-1}\,\frakt$ by a conjugate of an element in $K_{\infty}$.
For $\lambda \in \frakt_\bbC^*$, the \emph{derived functor module} $A_\frakq(\lambda)$ is defined as in \cite[(5.6)]{Knapp-Vogan}, provided that $\lambda$ is the differential of a representation of the Levi subgroup corresponding to $\frakl$.

In our setting, the $\theta$-stable parabolic subalgebra $\frakq(x)$ can be described as follows.
Take $x \in \sqrt{-1}\,\frakt$ so that
\[
x = (\underbrace{x_1, \ldots,x_1}_{p_1}, \ldots,\underbrace{x_k,\ldots,x_k}_{p_k},\underbrace{x_1, \ldots,x_1}_{q_1}, \ldots,\underbrace{x_k,\ldots,x_k}_{q_k}) \in \bbR^N,
\]
where $x_1 > \cdots > x_k$, $p_i,q_i \geq 0$, $(p_i,q_i) \neq (0,0)$ for all $i$ and $p_1 + \cdots +p_k = 1$ and $q_1 + \cdots + q_k = n$.
Then the Levi subgroup corresponding to $\frakq = \frakl \oplus \fraku$ is isomorphic to
\[
\U(p_1,q_1) \times \cdots \times \U(p_k,q_k).
\]

We now describe the (limits of) discrete series representations in terms of cohomological induction $A_\frakq(\lambda)$.
For an analytically integral weight $\lambda \in \frakt_\bbC^*$ and a positive system $\triangle^+$ with $\langle \lambda, \alpha^\vee \rangle \geq 0$ for any $\alpha \in \triangle^+$, let $\frakb = \frakt_\bbC \oplus \fraku$ be the Borel subalgebra of $\frakg_\bbC$ associated to $\triangle^+$.
Then by \cite[Proposition 11.180]{Knapp-Vogan} (or \cite[Subsection 3.5]{Ichino_2022}), the cohomological induction $A_\frakb(\lambda-\rho)$
is isomorphic to $\pi_{(\lambda, \Delta^+)}$, where $\rho$ is half the sum of positive roots in $\triangle^+$.

\subsection{M\oe glin-Renard's explicit construction}\label{subsection_MR_construction}
In the rest of this section, let $\psi$ be an $A$-parameter for $\U_{\C/\R}(N)$ with $N=n+1$.
As explained in Subsection \ref{L-A-parameter},  $\psi$ can be viewed as a formal sum of characters of $\bbC^\times$ tensored with irreducible finite dimensional representations of $\SL_2(\bbC)$;
\[
\psi = \bigoplus_{(t,s,a)} \chi_{t,s} \otimes S_{a} =  \bigoplus_{i=1}^\ell \chi_{t_i,s_i} \otimes S_{a_i}
\]
where the indices $(t,s,a) \in \bbZ \times \sqrt{-1}\,\bbR \times \bbZ_{>0}$ form a multiset of finite multiplicity.
Here, $\chi_{t,s}$ is the character of $\bbC^\times$ defined by $z \mapsto (z/\overline{z})^{t/2}(z\overline{z})^{s/2}$.
When $s=0$, let $\chi_t$ stand $\chi_{t,0}$ for simplicity.
We say that a parameter $\psi$ is \emph{of good parity} if $s_i = 0$ and $t_i + a_i + n + 1 \in 2\bbZ$ for any $i$.
In what follows, we only consider the parameters with good parity.
Assume that the ordering satisfies $t_{i} \geq t_{i+1}$ and $a_{i+1} \geq a_i$ if $t_{i} = t_{i+1}$.
Note that this definition of $\chi_t$ slightly differs from the one of \cite{Ichino_2022} but is the same as \cite{MR_consequence}.
Associated with an $A$-parameter $\psi$ of good parity, we obtain the component group
\[
\calS_\psi^{\mathrm{MR}} \defeq (\bbZ/2\bbZ)e_1 \oplus \cdots \oplus (\bbZ/2\bbZ)e_\ell.
\]
\begin{rem}
    The component group $\calS_\psi^\mr$ is different from $\calS_\psi$ in general; $\calS_\psi$ is a quotient of $\calS_{\psi}^\mr$.
    Let us rewrite $\psi$ as
    \[
    \psi 
    = \bigoplus_{i \in I^+} m_i \phi_i
    = \bigoplus_{i \in I^+} \bigoplus_{a=1}^{m_i} \phi_{i,a}.
    \]
    Let $e_{i,a}$ be the basis corresponding to $\phi_{i,a}$ and $e_{i} = \sum_{a=1}^{m_i}e_{i,a}$.
    We then have
    \[
    \calS_{\psi} 
    = \bigoplus_{i \in I^+} (\bbZ/2\bbZ)e_{i},\qquad
    \calS_{\psi}^\mr 
    = \bigoplus_{i \in I^+} \bigoplus_{a=1}^{m_i} (\bbZ/2\bbZ)e_{i,a}.
    \]
    Hence, $\Irr(\calS_{\psi})$ can be identified with
    \[
    \{\eta \in \Irr(\calS_{\psi}^\mr) \mid \text{$\eta(e_{i,a}) = \eta(e_{i, b})$ for any $i \in I^+$ and $1 \leq a,b \leq m_i$}\}.
    \]
    In the following, we use $\calS_\psi^\mr$ at the infinite place, but it is not essential since all the characters $\eta$ in $\calS_{\psi}^\mr$, that we use, factor through $\calS_{\psi}$.
\end{rem}

Take an $A$-parameter with good parity
\[
\psi = \bigoplus_{i=1}^\ell \chi_{t_i} \otimes S_{a_i}.
\]
Following \cite[Th\'eor\`eme 1.1]{MR_consequence}, we describe the representations in $\Pi(\psi)$.
Put
\[
\calD(\psi) \defeq \left\{(p_i,q_i)_{i=1,\ldots,\ell} \,\middle|\, \text{$p_i+q_i=a_i$ for any $i$ and $\sum_{i=1}^\ell p_i = 1, \sum_{i=1}^\ell q_i = n$}\right\}.
\]
For $\underline{d} \in \calD(\psi)$, set
\[
x_{\underline{d}} = (\underbrace{\ell, \ldots, \ell}_{p_1}, \ldots, \underbrace{1, \ldots, 1}_{p_\ell},\underbrace{\ell, \ldots, \ell}_{q_1}, \ldots, \underbrace{1, \ldots, 1}_{q_\ell})
\]
and
\[
\lambda_{\underline{d}} = (\underbrace{\lambda_1, \ldots, \lambda_1}_{p_1}, \ldots, \underbrace{\lambda_\ell, \ldots, \lambda_\ell}_{p_\ell},\underbrace{\lambda_1, \ldots, \lambda_1}_{q_1}, \ldots, \underbrace{\lambda_\ell, \ldots, \lambda_\ell}_{q_\ell})
\]
where $\lambda_i \defeq (t_i+a_i-n)/2+a_{<i}$ and $a_{<i} \defeq \sum_{j<i}a_j$.
\footnote{Note that there is a typo in \cite[(4.2)]{MR_consequence}: $(t_i+a_i-N)/2-a_{<i}$ should be $(t_i+a_i-N)/2+a_{<i}$.}
We then consider the cohomological induction
\[
\mathscr{A}_{\underline{d}}(\psi) \defeq A_{\frakq(x_{\underline{d}})}(\lambda_{\underline{d}})
\]
and a character $\vep_{\underline{d}}$ on $\mathcal{S}_\psi^{\mathrm{MR}}$ defined by
\[
\vep_{\underline{d}}(e_i) \defeq (-1)^{p_ia_{<i}+q_i(a_{<i}+1)+a_i(a_i-1)/2}
\]
for any $e_i \in \mathcal{S}_\psi^{\mathrm{MR}}$.
We now recall the theorem by M\oe glin-Renard, claiming that $A$-packets are described by cohomological inductions.
Let $\frakw_\infty^{\mathrm{MR}}$ be the Whittaker datum as in \cite[Remark 4.5]{MR_consequence}.

\begin{thm}[{\cite[Th\'{e}or\`{e}me 1.1]{MR_consequence}}]\label{thm_result_MR}
    Let $\psi = \bigoplus_{i=1}^\ell \chi_{t_i} \otimes S_{a_i}$ be an $A$-parameter of good parity.
    We then have
    \[
    \Pi(\psi) = \{\mathscr{A}_{\underline{d}}(\psi) \mid \underline{d} \in \calD(\psi)\}.
    \]
    and $\iota_{\frakw_\infty^{\mathrm{MR}}}(\mathscr{A}_{\underline{d}}(\psi)) = \vep_{\underline{d}}$.
    Moreover, the multiplicity one holds in $\Pi(\psi)$.
\end{thm}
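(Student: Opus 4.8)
The plan is to run the Adams--Johnson construction and then match it against Arthur's endoscopic characterization of the $A$-packet. First I would recognize $\psi = \bigoplus_{i=1}^\ell \chi_{t_i}\otimes S_{a_i}$ as an Adams--Johnson parameter: the $\SL_2(\bbC)$-factor embeds diagonally through $\prod_i S_{a_i}$ into $\prod_i \GL_{a_i}(\bbC) \subset \GL_N(\bbC) = \widehat{\U}_{\bbC/\bbR}(N)$, and $\prod_i \GL_{a_i}(\bbC)$ is the dual group of a $\theta$-stable Levi subgroup; good parity of $\psi$ is exactly the condition making it factor in this way. The real forms $\U(p_1,q_1)\times\cdots\times\U(p_\ell,q_\ell)$ of that Levi obtained by distributing the signature $(1,n)$ subject to $p_i+q_i=a_i$ are precisely indexed by $\calD(\psi)$, and for each $\underline{d}$ one cohomologically induces a one-dimensional representation of this Levi to obtain $\mathscr{A}_{\underline{d}}(\psi) = A_{\frakq(x_{\underline{d}})}(\lambda_{\underline{d}})$. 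The shift $\lambda_i = (t_i+a_i-n)/2+a_{<i}$ is forced by requiring that $\lambda_{\underline{d}}$ yield the infinitesimal character of $\phi_\psi$, together with the normalization coming from the chosen ordering $t_i\ge t_{i+1}$ (and $a_{i+1}\ge a_i$ when $t_i=t_{i+1}$).

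Second, I would check that each $\mathscr{A}_{\underline{d}}(\psi)$ is nonzero and irreducible and that distinct $\underline{d}$ give non-isomorphic modules. Good parity places $\lambda_{\underline{d}}$ in the weakly fair range relative to $\frakq(x_{\underline{d}})$, where the Vogan--Zuckerman irreducibility theorem applies \cite{Knapp-Vogan}, and nonvanishing holds for the specific $\lambda_{\underline{d}}$ arising here (this is the role of the positivity condition already seen for limits of discrete series). Since $x_{\underline{d}}$ and $\lambda_{\underline{d}}$ are built dominantly and compatibly with the ordering on the $t_i$, the pairs $(\frakq(x_{\underline{d}}),\lambda_{\underline{d}})$ are pairwise non-conjugate under $K_{\infty}$, so the $\mathscr{A}_{\underline{d}}(\psi)$ are pairwise distinct; combined with the next step this gives $|\Pi(\psi)| = |\calD(\psi)|$ and multiplicity one, and injectivity of $\iota_{\frakw_\infty}$ follows because $\underline{d}\mapsto\vep_{\underline{d}}$ is injective (its sign at the block carrying $p_i=1$ flips).

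Third, and this is the crux, I would show $\{\mathscr{A}_{\underline{d}}(\psi)\}_{\underline{d}}$ is Arthur's $A$-packet for the (generally non-quasi-split) group $\U(1,n)$. By the endoscopic classification \cites{Arthur_2013_book,KMSW,Mok_2015} underlying Theorem~\ref{AMF}, the packet members are characterized by the endoscopic character identities of \cite{KMSW} --- ultimately anchored in the twisted identity on the quasi-split form $\U_{\bbC/\bbR}(N)$ matching a stable sum with the twisted character of the isobaric sum of Speh representations of $\GL_N(\bbC)$ attached to $\psi$, with the pure inner form $\U(1,n)$ entering through the Kottwitz sign in the transfer factors. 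Using coherent continuation and the known character formulas for $A_\frakq(\lambda)$, this reduces to stable character identities on the Levi subgroups, which is the Adams--Johnson computation; the essential input is the compatibility of cohomological induction with endoscopic transfer in the weakly fair range. I expect this endoscopic comparison, rather than the internal representation theory of the $A_\frakq(\lambda)$, to be the main obstacle.

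Finally, the labeling $\iota_{\frakw_\infty}(\mathscr{A}_{\underline{d}}(\psi)) = \vep_{\underline{d}}$ is read off by tracking normalization constants through that comparison. Fixing the Whittaker datum $\frakw_\infty$ fixes the normalization of the transfer factors, and the sign attached to the basis element $e_i$ then splits into a contribution from the $\rho$-shift of the cohomological induction (the $a_{<i}$ terms), a contribution $(-1)^{a_i(a_i-1)/2}$ from the $S_{a_i}$-factor, and a contribution recording which real form $\U(p_i,q_i)$ of the $i$-th block is used (the dependence on $p_i$ versus $q_i$). Assembling these yields $\vep_{\underline{d}}(e_i) = (-1)^{p_ia_{<i}+q_i(a_{<i}+1)+a_i(a_i-1)/2}$; the bookkeeping is delicate but mechanical once the endoscopic identities of the previous step are in hand.
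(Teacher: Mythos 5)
The paper does not prove Theorem~\ref{thm_result_MR}: the bracketed attribution to M\oe glin--Renard's Th\'eor\`eme~1.1 marks it as an imported result, and no proof environment follows the statement, so there is no internal argument against which to compare your sketch. What you have written is a fair high-level reconstruction of the M\oe glin--Renard strategy --- recognize a good-parity $\psi$ as an Adams--Johnson parameter whose Arthur $\SL_2$-factor lands in the Levi $\prod_i \GL_{a_i}(\bbC)$, enumerate the compatible real forms of the corresponding $\theta$-stable Levi via $\calD(\psi)$, realize the packet by cohomological induction $A_{\frakq(x_{\underline{d}})}(\lambda_{\underline{d}})$, and match it against Arthur's packet through endoscopic character identities --- but it is a roadmap rather than a proof.

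The step you yourself flag as the crux is exactly where the gap lies. Verifying the endoscopic character identity for these cohomologically induced modules, with transfer factors normalized by the Whittaker datum and with the Kottwitz sign entering for the pure inner form $\U(1,n)$, is the content of the cited work and of the prior Arancibia--M\oe glin--Renard results for quasi-split groups on which it rests; ``compatibility of cohomological induction with endoscopic transfer in the weakly fair range'' names the missing lemma but does not supply it. The explicit sign formula $\vep_{\underline{d}}(e_i)=(-1)^{p_ia_{<i}+q_i(a_{<i}+1)+a_i(a_i-1)/2}$ is likewise not recoverable by the informal bookkeeping in your last paragraph: it is sensitive to normalization choices, as the present paper itself emphasizes in Remark~\ref{rem:Whittaker datum} when explaining that its Whittaker datum differs from M\oe glin--Renard's and changes the image of $z_\psi$. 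Since the theorem is cited rather than proved here, the correct response from the paper's standpoint is simply the reference to \cite{MR_consequence}; your sketch correctly identifies the ingredients of that proof without establishing them.
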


We define the infinitesimal character of $\psi$ by
\begin{align}\label{inf_char_A_param}
\left(\frac{t_1+a_1-1}{2}, \frac{t_1+a_1-3}{2}, \ldots, \frac{t_1-a_1+1}{2}, \ldots, \frac{t_\ell+a_\ell-1}{2}, \frac{t_\ell+a_\ell-3}{2}, \ldots, \frac{t_\ell-a_\ell+1}{2}\right).
\end{align}
Note that all the representations in $\Pi(\psi)$ have the same infinitesimal character with the Harish-Chandra parameter (\ref{inf_char_A_param}), since $\lambda_{\underline{d}} + \rho$ is equal to (\ref{inf_char_A_param}); see also \cite[Subsection 3.5]{Ichino_2022}.

\subsection{Unitary lowest weight representations and \texorpdfstring{$A$}{A}-packets}
In this subsection, we fix a positive system 
\[
\triangle^+ = \{e_i - e_j \mid 2 \leq i < j \leq n+1\} \cup \triangle(\frakp_+, \frakt_\bbC)
\]
of $\triangle$.
Let $\lambda \defeq (\lambda_1,\ldots,\lambda_{n+1}) \in \frakt_\bbC^*$ be an integral weight such that $\lambda_{2} \geq \cdots \geq \lambda_{n+1}$.
Let $F(\lambda)$ be the irreducible representation of $\frakk_\bbC$ with highest weight $\lambda$.
We regard $F(\lambda)$ as an irreducible $\frakp_- \oplus \frakk_\bbC$ module by letting $\frakp_-$ acts trivially.
Now, we define the \emph{parabolic Verma module} as
\[
N(\lambda) \defeq \calU(\frakg_\bbC) \otimes_{\calU(\frakp_- \oplus \frakk_\bbC)}  F(\lambda).
\]
It admits a unique irreducible quotient $L(\lambda)$.
The module $L(\lambda)$ is unitalizable if $\lambda_p - \lambda_{p+1} \geq N -1 - q'$ where $q' \defeq \#\{i \mid \lambda_i = \lambda_2, \lambda_{2} \leq i \leq n+1\}$.
Note that $L(\lambda)$ is a discrete series (resp. limit of discrete series) representation if $\lambda_{1} - \lambda_{2} > n$ (resp. $\lambda_1 - \lambda_{2} = n$) \cite{EHW_83}*{Theorem~2.4, Proposition~3.1}.
By (limit of) holomorphic discrete series representations, we mean a representation that is both (limit of) discrete series and of lowest weight.
The following statement is a special case of \cite[Theorem 1.2]{Horinaga_LW_unitary} and its proof relies on Theorem \ref{thm_result_MR} and the result by Barbasch and Vogan \cite{Barbasch-Vogan_1983_weyl}.

\begin{lem}\label{A_packet_hol_LDS}
    Let $\psi = \bigoplus \chi_{t_i} \otimes S_{a_i}$ be an $A$-parameter of good parity with $t_1\geq \cdots \geq t_\ell$ and $a_i \geq a_{i+1}$ if $t_i = t_{i+1}$.
    Let $\lambda = (\lambda_1, \ldots,\lambda_{n+1})$ be a weight such that $\lambda_1 > \lambda_2 = \cdots = \lambda_{n+1}$ and $0 \leq \lambda_1 - \lambda_2 \leq n$.
    \begin{enumerate}
        \item If the $A$-packet $\Pi(\psi)$ contains $L(\lambda)$, then the $A$-parameter $\psi$ satisfies the following properties.
        \begin{itemize}
            \item[(1-A)] The infinitesimal characters of $\psi$ is the same as the one of $L(\lambda)$, in other words,
            \begin{align*}
        &\bigsqcup_{i=1}^{\ell}\left\{\frac{t_i+a_i-1}{2}, \frac{t_i+a_i-3}{2}, \ldots, \frac{t_i-a_i+1}{2}\right\} = \left\{\lambda_1 - \frac{n}{2}\right\} \sqcup
        \left\{\lambda_2+\frac{n}{2}, \ldots,\lambda_{n+1}-\frac{n-2}{2}
        \right\}.
        \end{align*}
        \item[(1-B)] The multiset
        \[
        \bigsqcup_{i>1}^{\ell}\left\{\frac{t_i+a_i-1}{2}, \frac{t_i+a_i-3}{2}, \ldots, \frac{t_i-a_i+1}{2}\right\}
        \]
         is multiplicity-free.
        \end{itemize}
        \item Conversely, assume $\psi$ satisfies (1-A) and (1-B).
        Then, $\Pi(\psi)$ contains $L(\lambda)$ if and only if either of (2-A) or (2-B) holds.
        \begin{itemize}
            \item[(2-A)] $t_1 = 2\lambda_1$ and $a_1 = 1$.
            \item[(2-B)] $
            \left\{
            \lambda_1 - \frac{n}{2}, \lambda_{1} - \frac{n}{2}+1, \ldots, \lambda_2+\frac{n}{2}
            \right\} 
            \subset 
            \left\{\frac{t_1+a_1-1}{2}, \frac{t_1+a_1-3}{2}, \ldots, \frac{t_1-a_1+1}{2}\right\}$.
        \end{itemize}
        \item If $\Pi(\psi)$ contains $L(\lambda)$, then we have
        \[
        L(\lambda) = \mathscr{A}_{\underline{d}}(\psi)
        \]
        where $\underline{d} = \{(p_i,q_i)_i\}$ with $p_1=1, q_1=a_1-1$ and $p_i=0, q_i=a_i$ for any $2 \leq i \leq \ell$.
        The corresponding character $\vep_{\underline{d}}$ under the map $\iota_{\frakw_\infty^{\mathrm{MR}}}$ is given by
        \[
        \vep_{\underline{d}}(e_i) = 
        \begin{cases}
            (-1)^{(a_1+2)(a_1-1)/2} & \text{if}\ i=1;\\
            (-1)^{a_i(a_{<i}+1)+a_i(a_i-1)/2} & \text{if}\ 2 \leq i \leq n+1.
        \end{cases}
        \]
        In particular, there exists at most one unitary lowest weight representation in each $A$-packet $\Pi(\psi)$.
        \item If $\psi$ is generic and $\Pi(\psi)$ contains $L(\lambda)$, then $a_1 = 1$ and $\lambda_1 = \lambda_2+n$.
        In this case, $L(\lambda)$ is the limit of discrete series representation.
        Let 
        \[
        \vep_{\underline{d}}(e_i) = 
        \begin{cases}
            1 & \text{if}\ i=1;\\
            (-1)^{i} & \text{if}\ 2 \leq i \leq n+1.
        \end{cases}
        \]
        Then, $\iota_{\frakw_{\infty}^{\mathrm{MR}}}(L(\lambda)) = \vep_{\underline{d}}$.
    \end{enumerate}
\end{lem}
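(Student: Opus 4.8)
The plan is to combine the M\oe glin--Renard description of the real $A$-packet (Theorem~\ref{thm_result_MR}), which realizes each member of $\Pi(\psi)$ as a cohomologically induced module $\mathscr{A}_{\underline{d}}(\psi)=A_{\frakq(x_{\underline{d}})}(\lambda_{\underline{d}})$ with $\underline{d}\in\calD(\psi)$, with the classification of unitary lowest weight modules (Enright--Howe--Wallach \cite{EHW_83}) and with the realization of (limits of) holomorphic discrete series by cohomological induction from $\theta$-stable parabolics recalled in Subsection~\ref{subsection_MR_construction}. Given this, part~(1-A) is immediate: every representation in $\Pi(\psi)$ has infinitesimal character $(\ref{inf_char_A_param})$, while the infinitesimal character of $L(\lambda)$ is its Harish-Chandra parameter $\lambda+\rho$ for the fixed positive system $\triangle^+$, which is exactly the multiset on the right of~(1-A); hence $L(\lambda)\in\Pi(\psi)$ forces the two multisets to coincide.

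The heart of the argument is to decide which $\underline{d}\in\calD(\psi)$ can produce a lowest weight module. Since the Levi attached to $\frakl$ is $\prod_i\U(p_i,q_i)$ with $\sum_i p_i=1$, exactly one index $i_0$ has $p_{i_0}=1$. A computation with the explicit $\frakq(x_{\underline{d}})$ shows that $\mathscr{A}_{\underline{d}}(\psi)$ can be of lowest weight type --- i.e.\ the $\theta$-stable parabolic compatible with the holomorphic direction $\frakp_+$ --- only if $i_0=1$, hence $p_1=1$, $q_1=a_1-1$ and $p_i=0$, $q_i=a_i$ for $i\ge 2$. This is the $\underline{d}$ of part~(3); it is unique, giving ``at most one unitary lowest weight representation per $A$-packet'', and substituting these values into $\vep_{\underline{d}}(e_i)=(-1)^{p_ia_{<i}+q_i(a_{<i}+1)+a_i(a_i-1)/2}$ produces the character displayed there. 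It remains to see that this candidate is nonzero, irreducible, and isomorphic to $L(\lambda)$; this takes place only in the weakly fair range, and here I would invoke Barbasch--Vogan \cite{Barbasch-Vogan_1983_weyl}: comparing the associated variety of $\mathscr{A}_{\underline{d}}(\psi)$, a nilpotent orbit closure read off from $\fraku\cap\frakp$, with that of a lowest weight module forces the segments carried by the blocks $i\ge2$ to be pairwise disjoint, which is precisely~(1-B). Conversely, granting (1-A) and (1-B), one builds this $\underline{d}$ and checks --- again via Barbasch--Vogan together with the lowest-$K$-type formula for $A_\frakq(\lambda)$ --- that $A_{\frakq(x_{\underline{d}})}(\lambda_{\underline{d}})\cong L(\lambda)$ holds exactly when block~$1$ can carry the holomorphic direction of $\lambda$: either as a single character supplying the isolated infinitesimal-character eigenvalue, which unwinds to $a_1=1$ and $t_1=2\lambda_1$ (case (2-A)), or as a longer block whose segment contains the stretch $\{\lambda_1-\tfrac{n}{2},\dots,\lambda_2+\tfrac{n}{2}\}$ (case (2-B)); in every other configuration the candidate fails to be of lowest weight type, or vanishes.

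Part~(4) is then a direct specialization. If $\psi$ is generic then $a_i=1$ for all $i$, so the distinguished $\underline{d}$ has $q_1=a_1-1=0$, whence $\frakq(x_{\underline{d}})=\frakb$ is a Borel and $\mathscr{A}_{\underline{d}}(\psi)=A_{\frakb}(\lambda_{\underline{d}})$; since this module equals $L(\lambda)$, hence is nonzero, and $(\ref{inf_char_A_param})$ is singular under the standing hypotheses $\lambda_1>\lambda_2$ and $\lambda_1-\lambda_2\le n$, it is a \emph{limit} of discrete series rather than an honest one, so $\lambda_1=\lambda_2+n$ by the dichotomy recalled just before Lemma~\ref{A_packet_hol_LDS}. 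Substituting $a_i=1$, $p_1=1$, $q_1=0$ and $p_i=0$, $q_i=1$ $(i\ge2)$ into the formula of part~(3) gives $\vep_{\underline{d}}(e_1)=1$ and $\vep_{\underline{d}}(e_i)=(-1)^{q_i(a_{<i}+1)}=(-1)^{i}$ for $i\ge2$, and $\iota_{\frakw_\infty}(L(\lambda))=\vep_{\underline{d}}$ follows from $L(\lambda)=\mathscr{A}_{\underline{d}}(\psi)$ together with Theorem~\ref{thm_result_MR}. The main obstacle is the middle step: determining, in the weakly fair range, exactly which $A_{\frakq(x_{\underline{d}})}(\lambda_{\underline{d}})$ are unitary lowest weight modules; this needs control of the non-vanishing and irreducibility of cohomological induction outside the good range, for which the Barbasch--Vogan associated-variety analysis seems essential, and from it both~(1-B) and the (2-A)/(2-B) dichotomy emerge. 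A secondary but genuinely error-prone difficulty is bookkeeping: the ordering $t_1\ge\cdots\ge t_\ell$, the several $\rho$-shifts relating $\lambda_{\underline{d}}$ to $\lambda+\rho$, and the exponents in $\vep_{\underline{d}}$ (the excerpt already flags a sign typo in~\cite{MR_consequence} bearing on exactly this).
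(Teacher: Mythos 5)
The paper does not reprove parts (1)--(3) at all: it says explicitly, in the sentence just before the lemma, that this ``is a special case of [Theorem 1.2, Horinaga\_LW\_unitary]'', and the proof of the lemma consists of the single line ``All statements except the last follow from [Theorem 1.2, Horinaga\_LW\_unitary].'' Your proposal instead tries to reprove (1)--(3) from scratch out of Theorem~\ref{thm_result_MR} together with Barbasch--Vogan, which is conceptually the right list of ingredients (and exactly the list the paper cites as underlying the quoted theorem), but you stop short precisely at the hard step and you say so yourself: ``the main obstacle is the middle step \dots\ for which the Barbasch--Vogan associated-variety analysis seems essential, and from it both~(1-B) and the (2-A)/(2-B) dichotomy emerge.'' This is the whole content of (1-B), (2), and the uniqueness and identification in (3); sketching it as ``comparing associated varieties forces the segments to be disjoint'' is not a proof of nonvanishing or irreducibility of $A_{\frakq(x_{\underline d})}(\lambda_{\underline d})$ outside the good range, and is not how the paper handles it. The right move here is simply to cite the external theorem for (1)--(3), as the paper does; attempting to re-derive it in a two-page argument is where your proposal genuinely fails.

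By contrast, your treatment of part~(4) is a legitimate alternative to the paper's. The paper argues by contraposition: if $\lambda_1<\lambda_2+n$ then the repeated eigenvalue $\lambda_1-\tfrac n2$ sits strictly below $\lambda_2+\tfrac n2$, so neither (2-A) nor (2-B) can be reconciled with (1-B) and the ordering $t_1\ge t_2\ge\cdots$, hence $\psi$ is not generic. You instead use (3): the distinguished $\underline d$ has all $q_i+p_i=1$, so $\frakq(x_{\underline d})$ is a Borel, $L(\lambda)=A_{\frakb}(\lambda_{\underline d})$ is a nonzero Borel cohomological induction, hence a (limit of) discrete series, and the EHW dichotomy combined with the standing hypothesis $\lambda_1-\lambda_2\le n$ then forces $\lambda_1=\lambda_2+n$. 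Both routes are valid once (1)--(3) are in hand; yours is arguably more transparent because it invokes a structural fact about Borel cohomological induction rather than a case analysis on (2-A)/(2-B). Your character computation from (3) with $a_i=1$, $p_1=1$, $q_1=0$, $(p_i,q_i)=(0,1)$ for $i\ge2$ is also correct and matches the displayed formula.

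In summary: part (1-A) and the auxiliary computations are fine; part (4) is a correct and slightly different derivation than the paper's (conditional on (3)); but parts (1-B), (2), (3) are not actually proved in your proposal --- you acknowledge the Barbasch--Vogan step is missing --- whereas in the paper they are simply delegated to the cited theorem.
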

\begin{proof}
    All statements except the last follow from \cite[Theorem 1.2]{Horinaga_LW_unitary}.
    Suppose that $\lambda_1 < \lambda_2 + n$.
    Then, by the second statement and the ordering of $i$, i.e., $t_{i} \geq t_{i+1}$ and $a_{i} \geq a_{i+1}$ if $t_i = t_{i+1}$, the $A$-parameter $\psi$ is not generic.
    Hence, if $\psi$ is generic, the weight $\lambda$ satisfies $\lambda_1 = \lambda_2+n$.
  In this case, the character can be computed using the third statement.
\end{proof}

\begin{rem}
\label{rem:Whittaker datum}
    Two Whittaker data $\frakw_\infty$ and $\frakw_\infty^{\mathrm{MR}}$ are different in general.
    Recall that a Whittaker datum is a pair of Borel subalgebra of quasi-split form and its generic character of a maximal unipotent subgroup.
    Hence, in our case, the Whittaker datum depends on the choice of the quasi-split pure inner form, the split Hermitian form of dimension $n+1$.
    Our choice of split Hermitian form is the same as \cite{MR_real} if $N \equiv 1 \bmod 4$, but the choice is different if $N \equiv 3 \bmod 4$.
    Thus, by Lemma \ref{lemma_dependence_choice_Whittaker_datum} under our choice of Whittaker datum $\frakw_\infty$, the character $\eta_{\infty}$ corresponding to a limit of holomorphic discrete series representation of scalar type is
    \[
    \eta_{\infty}(e_i)
    =
    \begin{cases}
        (-1)^{n/2} & \text{if}\ i=1;\\
        (-1)^{a_i(a_{<i}+1)+a_i(a_i+n-1)/2} & \text{if}\ 2 \leq i \leq N.
    \end{cases}
    \]
    Note that for any choice of Whittaker datum, the characters are the same under the restriction to $\calS^+_\psi$, but the image of $z_\phi$ is different in general.
\end{rem}

\section{Construction of a cusp form of weight \texorpdfstring{$n$}{n}}
\label{section:Proof of existence of cuspform}
In this section, we prove the existence of full-level automorphic forms of even weight $n$ on $\U(1,n)$ by using Arthur's multiplicity formula.
\subsection{Base change lift}
Let $\pi = \bigotimes'_v \pi_v$ be a cuspidal automorphic representation of $\GL_2(\bbA)$ and $\phi_v$ be the $L$-parameter corresponding to $\pi_v$.
The $L$-parameter $\phi_v|_{L_{E_v}}$, which is the restriction of $\phi_v$ to $L_{E_v}$, defines an irreducible representation $\pi_v^\BC$ of $\GL_2(E_v)$, called the \emph{base change lift} of $\pi_v$.
Set $\pi^\BC \defeq \bigotimes'_v \pi_v^\BC$.
Conversely, we explain the construction of a cuspidal automorphic representation of $\GL_2(\bbA)$ from a Hecke character of $E^\times \bs \bbA_E^\times$.
Take a character $\chi = \bigotimes_v\chi_v$ of $E^\times \bs \bbA_E^\times$ such that
\[
\chi_\infty(z) \defeq (z/\overline{z})^{t/2}, \qquad t \in \bbZ_{>0}.
\]
We obtain a character of $\mathcal{W}_{E_v}$ attached to $\chi_v$ and then a character $\phi_v$ of $L_{E_v}$.
Consider the induced representation 
\[
I_v(\chi_v) \defeq \Ind_{L_{E_v}}^{L_{\bbQ_v}} \phi_v.
\]
This is an $L$-parameter of $\GL_2(\bbQ_v)$ since the representation $I_v(\chi_v)$ is of two-dimensional by $[\calW_{\bbQ_v} \colon \calW_{E_v}] = 2$.
Hence we obtain an irreducible representation $\pi(\chi_v)$ of $\GL_2(\bbQ_v)$ corresponding to $I_v(\chi_v)$.
Set $\pi(\chi) \defeq \bigotimes'_v \pi(\chi_v)$.
The representation $\pi(\chi)$ is a cuspidal automorphic representation of $\GL_2(\bbA)$ by \cite[Lemma 11.3]{Langlands_base_change}.
We refer to such an irreducible representation of $\GL_2(\bbA)$, obtained through the above procedure, as \emph{CM}.

\begin{thm}[{\cite[Lemma 11.3 (b)]{Langlands_base_change}}]
    Let $\pi$ be a cuspidal automorphic representation of $\GL_2(\bbA)$.
    Then, the base change lift $\pi^\BC$ to $\GL_2(\bbA_E)$ is an automorphic representation of $\GL_2(\bbA_E)$.
    Moreover, $\pi^\BC$ is cuspidal if $\pi$ is not CM over $E$.
\end{thm}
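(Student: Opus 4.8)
The plan is to recall the classical argument underlying \cite[Lemma~11.3]{Langlands_base_change}. First I would observe that $\pi^\BC = \bigotimes'_v \pi_v^\BC$ is \emph{a priori} a well-defined irreducible admissible representation of $\GL_2(\bbA_E)$: each local factor $\pi_v^\BC$ is the representation attached by the local Langlands correspondence to $\phi_v|_{L_{E_v}}$, and it is irreducible because for $\GL_2$ the local packet $\Pi(\phi_v|_{L_{E_v}})$ is a singleton (Proposition~\ref{prop_LLC_GL}). Since $\pi$ is generic, every $\pi_v^\BC$ is generic, so $\pi^\BC$ is globally generic. Thus the only substantive point in the first assertion is automorphy.

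For automorphy I would invoke the converse theorem for $\GL_2$ over $E$, which is the route taken in \cite{Langlands_base_change} and is equivalent to the comparison of the $\Gal(E/\bbQ)$-twisted trace formula of $\GL_2$ over $E$ with the trace formula of $\GL_2$ over $\bbQ$. For every Hecke character $\eta$ of $E^\times \bs \bbA_E^\times$, the identity $L(s,\pi^\BC \otimes \eta) = L(s, \pi \times \mathrm{AI}_{E/\bbQ}(\eta))$, where $\mathrm{AI}_{E/\bbQ}(\eta)$ is the automorphic induction of $\eta$ to $\GL_2$ over $\bbQ$, exhibits $L(s,\pi^\BC \otimes \eta)$ as a $\GL_2$ Rankin--Selberg $L$-function over $\bbQ$, hence entire, bounded in vertical strips, and with the expected functional equation. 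The converse theorem then produces an automorphic representation of $\GL_2(\bbA_E)$ whose local component at every place is $\pi_v^\BC$; that is, $\pi^\BC$ is automorphic. Being globally generic, $\pi^\BC$ is then either cuspidal or an isobaric sum $\chi_1 \boxplus \chi_2$ of Hecke characters of $\bbA_E^\times$, by the classification of the generic automorphic spectrum of $\GL_2$.

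It remains to decide when the non-cuspidal alternative occurs, which is the content of the second assertion. Here I would use the standard dichotomy for quadratic base change: $\pi^\BC$ fails to be cuspidal precisely when $\pi \cong \pi \otimes \omega_{E/\bbQ}$, and in that case $\pi$ is the automorphic induction $\pi(\chi)$ of a Hecke character $\chi$ of $E^\times \bs \bbA_E^\times$, with $\pi^\BC \cong \chi \boxplus \chi^c$, where $c$ is the nontrivial automorphism of $E/\bbQ$. Conversely, if $\pi = \pi(\chi)$, then Mackey's formula gives $I_v(\chi_v)|_{L_{E_v}} \cong \phi_v \oplus \phi_v^c$ at every place $v$, so that $\pi^\BC$ is the isobaric sum $\chi \boxplus \chi^c$, which is not cuspidal. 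Since ``$\pi$ is CM over $E$'' means by definition ``$\pi \cong \pi(\chi)$ for some Hecke character $\chi$ of $E^\times \bs \bbA_E^\times$'', equivalently $\pi \cong \pi \otimes \omega_{E/\bbQ}$, this yields exactly the asserted cuspidality criterion.

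The main obstacle is the automorphy of $\pi^\BC$: its irreducibility is immediate from the local correspondence, and the cuspidality criterion is essentially formal once base change is available, but establishing that the place-by-place lift $\bigotimes'_v \pi_v^\BC$ is globally automorphic --- whether via the converse theorem or the twisted trace formula --- is the deep input, which is why the statement is quoted directly from \cite{Langlands_base_change} rather than reproved here.
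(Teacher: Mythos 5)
The paper does not prove this statement; it is quoted directly from Langlands' \emph{Base Change for $\GL(2)$} (Lemma~11.3(b)), so there is no internal proof to compare against. Your sketch is a reasonable reconstruction of the standard argument, but with one misattribution worth flagging: Langlands' own proof is not via the converse theorem --- it proceeds by a direct comparison of the $\Gal(E/\bbQ)$-twisted trace formula for $\GL_2/\bbQ$ with the ordinary trace formula for $\GL_2/E$. You do hedge by calling the converse-theorem route ``equivalent to'' the trace formula comparison, but they are logically distinct methods; the converse-theorem approach requires two independent inputs (automorphic induction of Hecke characters of $E$ to $\GL_2/\bbQ$, classical via theta series, and the analytic properties of $\GL_2\times\GL_2$ Rankin--Selberg $L$-functions) and was developed as an alternative route later. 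Your treatment of the cuspidality dichotomy --- $\pi^\BC$ cuspidal iff $\pi\not\cong\pi\otimes\omega_{E/\bbQ}$ iff $\pi$ is not an automorphic induction $\pi(\chi)$ from $E$ --- is correct and matches the paper's definition of CM exactly, and the Mackey computation identifying $\pi(\chi)^\BC$ with the isobaric sum $\chi\boxplus\chi^c$ is also right. Since the paper is content to cite the result, you have done more than the paper itself, and the only inaccuracy is historiographical rather than mathematical.
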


Let $\pi_k = \bigotimes_v' \pi_v$ be a cuspidal automorphic representation of $\GL_2(\bbA)$ defined by a newform of weight $k$.
We now recall the $L$-parameter for the archimedean components of $\pi_k$ and $\pi_k^\BC$ following \cite[Subsection 3.1]{2018_AMR}.
Then, the archimedean component $\pi_{k,\infty}$ is the discrete series representation of $\GL_2(\bbR)$ of weight $k$ with the central character $\sgn^k$.
The $L$-parameter for $\pi_{k,\infty}$ is a two-dimensional representation $\rho$ defined by
\[
\rho(z) = 
\begin{pmatrix}
    \chi_{k-1}(z) & 0 \\
    0& \chi_{1-k}(z)
\end{pmatrix},
\qquad
\rho(j)
=
\begin{pmatrix}
    0&(-1)^{k-1}\\
    1&0
\end{pmatrix}
\]
for $z\in\C^{\times}$.
Here, $\chi_t:z\mapsto (z/\overline{z})^{t/2}$ is the character of $\C^{\times}$ defined in Subsection \ref{subsection_MR_construction}.
The base change $\rho|_{L_\bbC} = \rho|_{\bbC^\times}$ of $\rho$ can be written by the direct sum
\begin{equation}\label{def_L_para_GL_2_DS_wt_k}
\chi_{k-1} \oplus \chi_{1-k},
\end{equation}
which describes the $L$-parameter for  $\pi^{\BC}_{k,\infty}$.
This will be used in the computation of the archimedean component of our global $A$-parameter.

\subsection{Non-CM elliptic modular forms}
\label{subsec:Non-CM elliptic modular forms of odd weight}

We briefly recall the definition of elliptic modular forms.
Let $k$ be a positive integer and $\Gamma$ be an arithmetic subgroup of $\SL_2(\bbQ)$.
We say that a holomorphic function $f$ on the upper half plane is a \emph{modular form of weight $k$ with respect to $\Gamma$} if $f$ is holomorphic at the cusps and satisfies the transformation low $f|_k \gamma = f$
for any $\gamma \in \Gamma$.
Here $|_k$ is the weight $k$ slash operator defined by
\[
(f|_k \gamma)(z) \defeq (cz+d)^{-k}f((az+b)(cz+d)^{-1}),\quad \gamma = \begin{pmatrix}
    a&b\\
    c&d
\end{pmatrix}\in \Gamma.
\]
For $N\in\Z$, let $\Gamma_0(N)$ be the congruence subgroup defined by
\[
\Gamma_0(N) \defeq
\left\{
\begin{pmatrix}
    a&b\\
    c&d
\end{pmatrix}
\in \SL_2(\bbZ)
\,\middle|\,
c \in N\bbZ
\right\}
\]
and
\[
\Gamma_1(N) \defeq
\left\{
\begin{pmatrix}
    a&b\\
    c&d
\end{pmatrix}
\in \SL_2(\bbZ)
\,\middle|\,
a-1,\ d-1,\ c \in N\bbZ
\right\}.
\]
For a modular form $f$ of weight $k$ with respect to $\Gamma_1(N)$, we can consider the Fourier expansion
\[
f(z) = \sum_{m=0}^\infty a_f(m) \exp(2\pi\sqrt{-1}\, mz).
\]
For a character $\chi$ of $(\bbZ/N\bbZ)^\times$, we define $S_k(N, \chi)$ as the space of cusp forms $f$ of weight $k$ and with respect to $\Gamma_1(N)$ satisfying
\[
f|_k \gamma  = \chi(d)f,\quad \gamma = \begin{pmatrix}
    a&b\\
    c&d
\end{pmatrix} \in  \Gamma_0(N).
\]
Such modular forms are called \emph{the cusp forms of weight $k$ with nebentypus $\chi$}.
We say that $f$ is \emph{CM} if the corresponding automorphic form $\varphi_f$ of $f$ on $\GL_2(\bbA)$ generates $\pi(\mu)$ for some Hecke character $\mu$ of $E^\times \bs \bbA_E^\times$.

Let $f\in S_k(D,\omega_{E/\Q})$ be a normalized newform.
Consider the corresponding cuspidal automorphic representation $\pi_f = \bigotimes'_v \pi_{f,v}$.
For $p \nmid D$, the representation $\pi_{f,p}$ is an irreducible unramified representation of $\GL_2(\bbQ_p)$ with the central character $\omega_{E/\Q,p}$.
For $p \mid D$, the representation $\pi_{f,p}$ is isomorphic to
\begin{align}\label{def_pi_f_p}
\Ind_{B}^{\GL_2(\bbQ_p)} (\mu_p \boxtimes \mu_p^{-1}\omega_{E/\Q,p})
\end{align}
where $\mu_p$ is a unitary unramified character of $\bbQ_p^\times$ by \cite[Proposition 2.8]{2012_Loeffer-Weinstein}, as $\omega_{E/\Q,p}$ has the conductor one.
The $p$-th Fourier coefficient $a_f(p)$ for $p|D$ is given by
\[
a_f(p) = p^{(k-1)/2} \cdot \mu_p(p),\quad \mu_p(p) \in \U(1)
\]
by \cite[Proposition 2.8]{2012_Loeffer-Weinstein}.
In particular, if $f$ is CM with respect to an everywhere unramified Hecke character $\chi$ of $E$, the $p$-th Fourier coefficient is $\pm p^{(k-1)/2}$; see Appendix \ref{subsec_appendix_sato-tate} for details.

For our purposes, we require the following existence theorem.
\begin{thm}
\label{thm:normalized_newform}
    Let $E$ be an imaginary quadratic field with odd discriminant $-D < -3$ and $k \geq 3$ be an odd integer.
        Then, there exists a non-CM normalized newform in $S_k(D, \omega_{E/\Q})$ if and only if
    \[
    k\geq 
    \begin{cases}
    3 & \text{if}\  D = 19\ \mathrm{or}\ D>30;\\
    5 & \text{if}\  D = 11,15, 23;\\
    7 & \text{if}\  D = 7.
\end{cases} 
    \]
    Moreover in such a situation, there exists a non-CM normalized newform $f$ whose Fourier coefficient satisfies $a_f(p) \neq \pm p^{(k-1)/2}$ for any $p\mid D$.
\end{thm}
The proof of this theorem requires a somewhat involved calculation that, while straightforward, diverges from the main theme. We therefore relegate the proof to the appendix.
Also, see Theorem \ref{thm_appendix_existence_newform} for a stronger version of the statement.

We note that when $a_f(p) = \pm p^{(k-1)/2}$ for some $p \mid D$, the $p$-components of the representation defined by the global $A$-parameters we will construct in Table \ref{table:global A parameter} do not have a nontrivial fixed vector of appropriate maximal compact subgroups in general.

\begin{rem}
    In the case of an even discriminant $D$, the relationship between the Fourier coefficients of newforms and the associated automorphic representation remains unclear.
    We nevertheless expect that, once the local issues at $2$ are taken into account, an appropriately modified statement should hold.
\end{rem}

\subsection{Construction of global parameters}\label{subsection:construction_of_global_parameters}

In the course of the proof of Theorem \ref{mainthm:existence of low slope automorphic form}, we will show the following stronger argument.

In this subsection, assuming Theorem \ref{thm:normalized_newform}, we explain the construction of $\psi_n$.
Recall that $(L,h)$ is a Hermitian space over $\OO_E$ of signature $(1, n)$.
It defines an arithmetic subgroup $\Gamma_L \defeq \bbG(\bbZ)$.
Let $\tau = \bigotimes'_v \tau_v$ be a character of $E^\times \bs \bbA_E^\times$ satisfying
\begin{align*}
    \tau_v|_{\OO_{E_v}^\times} &= \mathbf{1}\quad (v\ \mathrm{is\ finite})\\
    \tau_v(z) &= z/\overline{z}\quad (v\ \mathrm{is\ infinite}).
\end{align*}
Note that such a $\tau$ exists since $\tau_v(-1) = 1$ for any $v$ and $\mathscr{O}_E^\times = \{\pm1\}$.
Since the restriction of $\tau$ to $\bbA^\times$ is trivial, the Hecke character $\tau$ is conjugate-selfdual with sign $+1$.

For each $n$, we define a generic global $A$-parameter $\psi_n$ as follows.
Let $\pi_k^{\BC}$ be the base change lift to $\GL_2(\bbA_E)$ of a cuspidal automorphic representation generated by a non-CM newform in $S_k(D, \omega_{E/\bbQ})$ that assured by Theorem \ref{thm:normalized_newform}.
We regard the character $\tau$ of $E^\times \bs \bbA_E^\times$ as the character of $\GL_2(\bbA_E)$ by $\tau \circ \det$.

\begin{lem}\label{lem_conj_self_dual_global}
    Let $\pi$ be a cuspidal automorphic representation of $\GL_2(\bbA)$ such that $\pi^\BC$ is cuspidal.
    The representation $\pi^\BC$ of $\GL_2(\bbA_E)$ is conjugate self-dual with sign $+1$ (resp.~$-1$) if and only if $\pi$ has the central character $\omega_{E/\bbQ}$ (resp.~trivial central character).
\end{lem}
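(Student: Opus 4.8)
The plan is to translate the conjugate‑self‑duality and its sign into the analytic behaviour at $s=1$ of two Hecke $L$‑functions, via the standard factorization of the Asai $L$‑function of a base change. The key input is the isobaric identity $\As^+(\pi^{\BC})\cong\Sym^2\pi\boxplus(\omega_\pi\cdot\omega_{E/\bbQ})$, hence $\As^-(\pi^{\BC})\cong(\Sym^2\pi\otimes\omega_{E/\bbQ})\boxplus\omega_\pi$, where $\omega_\pi$ is the central character of $\pi$ viewed as a Hecke character of $\bbQ^\times\bs\bbA^\times$; this is the automorphic shadow of the elementary identity $\As^+(\rho|_{\calW_E})\cong\Sym^2\rho\oplus(\wedge^2\rho)\otimes\omega_{E/\bbQ}$ for a two–dimensional representation $\rho$ of $\calW_\bbQ$ (note $\wedge^2\rho=\det\rho$ corresponds to $\omega_\pi$), combined with the local Langlands correspondence and the compatibility of base change with $L$‑factors. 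It yields
\[
L(s,\pi^{\BC},\As^+)=L(s,\pi,\Sym^2)\,L(s,\omega_\pi\omega_{E/\bbQ}),\qquad L(s,\pi^{\BC},\As^-)=L(s,\pi,\Sym^2\otimes\omega_{E/\bbQ})\,L(s,\omega_\pi).
\]

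There are two further ingredients. First, the symmetric‑square factors must not contribute a pole at $s=1$: by Gelbart--Jacquet $\Sym^2\pi$ is an automorphic representation of $\GL_3(\bbA)$, cuspidal unless $\pi$ is dihedral, in which case $L(s,\pi,\Sym^2)$ is entire and $L(1,\pi,\Sym^2)\neq0$ by Jacquet--Shalika (and the same after twisting by $\omega_{E/\bbQ}$). If $\pi$ is dihedral, say $\pi=\mathrm{AI}_{K/\bbQ}(\chi)$ (necessarily $K\neq E$ since $\pi^{\BC}$ is cuspidal), one has $\Sym^2\pi\cong\mathrm{AI}_{K/\bbQ}(\chi^2)\boxplus(\chi|_{\bbA^\times})$, and the running hypotheses ($\pi^{\BC}$ cuspidal and conjugate self‑dual, so $\omega_\pi\in\{\mathbf 1,\omega_{E/\bbQ}\}$, together with $K\neq E$) force $\chi^2\neq\mathbf 1$ and $\chi|_{\bbA^\times}\neq\mathbf 1$, so again $L(s,\pi,\Sym^2)$ and its twist are holomorphic and non‑vanishing at $s=1$. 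I expect this dihedral bookkeeping, together with pinning down the $\As^+/\As^-$ convention so it agrees with the one fixed (following Mok) before the statement, to be the only genuinely delicate points. Second, since base change is Galois‑equivariant, $(\pi^{\BC})^c\cong\pi^{\BC}$, so $\pi^{\BC}$ is conjugate self‑dual (with some sign) if and only if it is self‑dual, equivalently $\omega_{\pi^{\BC}}=\omega_\pi\circ\Norm_{E/\bbQ}=\mathbf 1$, equivalently $\omega_\pi\in\{\mathbf 1,\omega_{E/\bbQ}\}$ by class field theory; these two cases are disjoint.

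The conclusion is then immediate: a Hecke $L$‑function $L(s,\mu)$ of a unitary character of $\bbQ^\times\bs\bbA^\times$ has a (simple) pole at $s=1$ exactly when $\mu=\mathbf 1$, so $L(s,\pi^{\BC},\As^+)$ has a pole at $s=1$ $\iff$ $\omega_\pi\omega_{E/\bbQ}=\mathbf 1$ $\iff$ $\omega_\pi=\omega_{E/\bbQ}$, while $L(s,\pi^{\BC},\As^-)$ has a pole at $s=1$ $\iff$ $\omega_\pi=\mathbf 1$. Invoking the fact (Mok's Theorem~2.5.4, as recalled above) that exactly one of the two Asai $L$‑functions of a cuspidal conjugate self‑dual representation has a pole at $s=1$, one reads off that $\pi^{\BC}$ is conjugate self‑dual with sign $+1$ precisely when $\omega_\pi=\omega_{E/\bbQ}$ and with sign $-1$ precisely when $\omega_\pi=\mathbf 1$, which is the assertion. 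As a sanity check for the intended use, in the application $\pi=\pi_k$ is generated by a non‑CM newform of odd weight $k$ with nebentypus $\omega_{E/\bbQ}$, so $\omega_{\pi_k}=\omega_{E/\bbQ}$ and $\pi_k^{\BC}$ has sign $+1$.
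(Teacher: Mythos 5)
Your argument takes the same route as the paper's: the paper's proof is a one-line citation to the factorization of the Asai $L$-function of a base change (\cite[Section 2.4]{2019_Balasubramanyam_Basker_Majudar_p_adic_asai_transfer}), which is exactly the identity $\As^{+}(\pi^{\BC})\cong\Sym^2\pi\boxplus(\omega_\pi\omega_{E/\bbQ})$ you write down, together with the pole criterion of \cite[Theorem 2.5.4]{Mok_2015}. You have simply unpacked what the citation encodes, and the conclusion of the sign dichotomy from the location of the pole is correct.

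One point in your dihedral bookkeeping is not airtight. From $\Sym^2\pi\cong\mathrm{AI}_{K/\bbQ}(\chi^2)\boxplus(\chi|_{\bbA^\times})$, the condition $\chi^2\neq\mathbf{1}$ does rule out a pole of $L(s,\mathrm{AI}_{K/\bbQ}(\chi^2))$: if $\chi^2$ fails to be $\sigma$-regular then $\mathrm{AI}_{K/\bbQ}(\chi^2)=\eta\boxplus\eta\omega_{K/\bbQ}$ with $\chi^2=\eta\circ\Norm_{K/\bbQ}$, and $\eta\in\{\mathbf{1},\omega_{K/\bbQ}\}$ forces $\chi^2=\mathbf{1}$. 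But the same reasoning does not dispose of the twisted factor $L(s,\mathrm{AI}_{K/\bbQ}(\chi^2)\otimes\omega_{E/\bbQ})$: when $\chi^2=\omega_{E/\bbQ}\circ\Norm_{K/\bbQ}=\omega_{KE/K}$ (a nontrivial quadratic character compatible with $\chi^2\neq\mathbf{1}$ and with $\omega_\pi\in\{\mathbf{1},\omega_{E/\bbQ}\}$), the twist contains the trivial character and has a pole at $s=1$, which your stated conditions do not exclude. You need an extra observation here, e.g.\ that two simple poles at $s=1$ on one side of the $\As^{\pm}$ dichotomy would contradict the simplicity of the pole of the Rankin--Selberg period for cuspidal $\pi^{\BC}$, or an explicit check that $\chi^2=\omega_{KE/K}$ is incompatible with the cuspidality of $\pi^{\BC}$. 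For the paper's actual use (a non-CM newform with nebentypus $\omega_{E/\bbQ}$) the dihedral case never arises, so this is a gap in generality rather than in the application.
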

\begin{proof}
    This follows from the direct computation of local factors associated with the Asai $L$-function; see \cite[Section 2.4]{2019_Balasubramanyam_Basker_Majudar_p_adic_asai_transfer} and \cite[Lemma 3.1]{23_Ichino-Prasanna}.
\end{proof}
Now, we define $\psi_n$ as in Table \ref{table:global A parameter}.
Note that we here assume that the inequality in Theorem \ref{thm:normalized_newform}, replacing $k$ with $n$, holds to assure the existence of non-CM normalized newforms.
Owing to limitations of space, we have omitted the representation for $\mathrm{SL}_2(\mathbb{C})$ there.
The following lemma is straightforward, but we include it for the sake of completeness.
\begin{lem}
    $\psi_n$ are generic global $A$-parameters.
\end{lem}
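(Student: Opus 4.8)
The plan is to check, one axiom at a time, that $\psi_n$ as recorded in Table \ref{table:global A parameter} meets the definition of a global $A$-parameter given in Subsection \ref{subsec:Arthur's multiplicity formula}, and that all of its $\SL_2(\bbC)$-factors are trivial. Genericity is immediate from the way $\psi_n$ is written down: every summand carries the factor $S_1$ — this is precisely the data suppressed in the table "owing to limitations of space" — so $\psi_n = \bigoplus_i \Phi_i \boxtimes S_1$ with $d_i = 1$ for all $i$. Since $n$ is even, $N = n+1$ is odd, so the sign demanded of each $\Phi_i$ is $(-1)^{N-d_i} = (-1)^{N-1} = +1$, and it is this uniform sign that the argument must produce.

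First I would treat the $\GL_2(\bbA_E)$-constituent, namely $\pi_k^{\BC}$ (possibly twisted by the character $\tau\circ\det$), where $\pi_k$ is the cuspidal automorphic representation generated by a non-CM normalized newform in $S_k^{\mathrm{Ell}}(\Gamma_1(D),\omega_{E/\bbQ})$; such a newform is furnished by Proposition \ref{prop:normalized newform} when $D$ is prime and by Conjecture \ref{conj_normalized_newform} in general. As the newform is non-CM over $E$, Langlands' base change theorem \cite{Langlands_base_change} gives that $\pi_k^{\BC}$ is irreducible and cuspidal, and a twist by a Hecke character preserves these properties. Conjugate-self-duality with sign $+1$ then follows from Lemma \ref{lem_conj_self_dual_global}: the nebentypus $\omega_{E/\bbQ}$ is the central character of $\pi_k$, which gives sign $+1$; and twisting by $\tau\circ\det$, where $\tau$ is conjugate-self-dual of sign $+1$ (being trivial on $\bbA^\times$), leaves the sign unchanged. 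For the remaining $\GL_1$-constituents, which are the Hecke characters built from $\tau$ listed in the table, each is trivially cuspidal on $\GL_1$, and each restricts trivially to $\bbA^\times$ because $\tau$ does, hence is conjugate-self-dual with sign $+1$.

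It then remains to verify the two combinatorial conditions: that the pairs $(\Phi_i, 1)$ are pairwise distinct, and that $\sum_i N_i = N = n+1$. The equality of ranks is a direct count over the rows of Table \ref{table:global A parameter}. For distinctness, since all $d_i$ equal $1$ it suffices that the $\Phi_i$ are pairwise non-isomorphic: the $\GL_2$-constituent is set apart from the $\GL_1$-constituents by rank, and the $\GL_1$-constituents are pairwise distinct because their archimedean components are pairwise distinct characters $\chi_{\ast}$ of $\bbC^\times$, as read off from the exponents recorded in the table. The only step carrying genuine content rather than bookkeeping is the cuspidality and conjugate-self-duality of the $\GL_2$-constituent, which rests on the non-CM hypothesis and on Lemma \ref{lem_conj_self_dual_global}; I do not anticipate any real obstacle, and the lemma follows by assembling these checks.
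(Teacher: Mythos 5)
Your proof is correct and follows essentially the same route as the paper: both reduce the lemma to verifying that each constituent of $\psi_n$ is conjugate self-dual with sign $+1$, invoke Lemma~\ref{lem_conj_self_dual_global} for the $\GL_2$ factor, and use the triviality of $\tau|_{\bbA^\times}$ for the $\GL_1$ factors. The one substantive step you assert rather than justify is that twisting $\pi_k^\BC$ by $\tau^\ell$ preserves the sign; the paper makes this explicit via the identity $L(s,\pi_k^\BC\otimes\tau^\ell,\As^+) = L(s,\pi_k^\BC,\As^+)$, which holds precisely because $\tau^\ell$ restricts trivially to $\bbA^\times$, and this is the reason the twist cannot change which Asai $L$-function has the pole at $s=1$. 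Your additional bookkeeping (pairwise distinctness of summands, ranks summing to $N=n+1$) is correct and is left implicit in the paper, but it is not where the content of the lemma lies.
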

\begin{proof}
    It suffices to check the conjugate self-duality of automorphic representations in $\psi_n$ and their sign.
    Since $\psi_n$ is generic, i.e., the $\SL_2(\bbC)$-factor is trivial, we need to show that the automorphic representations of the form $\pi_k^{\BC} \otimes \tau^\ell$ are conjugate self-dual with sign $+1$.
    Since $\tau|_{\bbA^\times} = \mathbf{1}$, the character $\tau$ is conjugate self-dual with sign $+1$ (see also \cite[Subsection 1.3.4]{KMSW}).
    For a newform $f$ in $S_k(D, \omega_{E/\bbQ})$, the associated cuspidal automorphic representation has the central character $\omega_{E/\bbQ}$.
    Hence, Lemma \ref{lem_conj_self_dual_global} implies that $\pi_k^{\BC}$ is conjugate self-dual with sign $+1$.
    Then, $\pi_k^{\BC} \otimes \tau^\ell$ are also conjugate self-dual with sign $+1$ by $L(s, \pi_k^{\BC} \otimes \tau^\ell, \As^+) = L(s, \pi_k^{\BC}, \As^+)$, which follows from the definition of the Asai $L$-function (cf.~\cite[p.~210]{1993_Flicker}).
\end{proof} 
These $\psi_n$ are chosen so as to satisfy the global sign condition in  Theorem \ref{AMF}, or more technically, to satisfy Proposition \ref{prop:eta and psi satisfy the appropriate properties} in our case.

\begin{table}
\caption{Definition of $\psi_n$.}
 \label{table:global A parameter}
\scalebox{0.7}{
\begin{tabular}{|c|c|c|}
\hline
$D$ & $n$ & $\psi_{n}$ \\ 
\hline
\multirow{2}{*}{$19,\;> 30$} 
    & $\equiv 0 \pmod{4}$ 
    & $\displaystyle
      \tau^{n/2}
      \;\oplus\;
      \bigoplus_{i=1}^{n/4}
        \bigl(\pi_{3}^\BC\otimes \tau^{4i - 2-n/2} \;\oplus\; \pi^\BC_{3}\otimes \tau^{4i - 1-n/2}\bigr)
      $ 
    \\ 
\cline{2-3}
    & $\equiv 2 \pmod{4}$ 
    & $\displaystyle
      \pi_{3}^\BC\otimes \tau^{n/2 - 1}
      \;\oplus\;
      \pi_{5}^\BC\otimes \tau^{n/2 - 2}
      \;\oplus\;
      \pi_{3}^\BC\otimes \tau^{n/2 - 2}
      \;\oplus\;
      \tau^{n/2 - 5}
      \;\oplus\;
      \bigoplus_{i=1}^{(n - 6)/4}
        \bigl(\pi_{3}^\BC\otimes \tau^{4i - 2-n/2} \;\oplus\; \pi_{3}^\BC\otimes \tau^{4i - 1-n/2}\bigr)
      $ 
    \\ 
\hline
\multirow{4}{*}{$11,\,15,\,23$} 
    & $ \equiv 0 \pmod{8}$ 
    & $\displaystyle 
      \tau^{n/2}
      \;\oplus\;
      \bigoplus_{i=1}^{n/8}
        \bigoplus_{k=1}^{4}
          \bigl(\pi^\BC_{5}\otimes \tau^{8i - 6 + k-n/2}\bigr)
      $ 
    \\ 
\cline{2-3}
    & $\equiv 4 \pmod{8}$ 
    & $\displaystyle
      \tau^{n/2}
      \;\oplus\;
      \pi_{n-1}^\BC\otimes \tau
      \;\oplus\;
      \pi_{n-1}^\BC
      \;\oplus\;
      \bigoplus_{i=1}^{(n-4)/8}
        \bigoplus_{k=1}^{4}
          \bigl(\pi^\BC_{5}\otimes \tau^{8i - 4 + k-n/2}\bigr)
      $ 
    \\ 
\cline{2-3}
    & $\equiv 2 \pmod{8}$ 
    & $\displaystyle
      \pi_{n-1}^\BC\otimes \tau
      \;\oplus\;
      \tau^{1-n/2}
      \;\oplus\;
      \bigoplus_{i=1}^{(n-2)/8}
        \bigoplus_{k=1}^{4}
          \bigl(\pi^\BC_{5}\otimes \tau^{8i - 4 + k-n/2}\bigr)
      $ 
    \\ 
\cline{2-3}
    & $\equiv 6 \pmod{8}$ 
    & $\displaystyle
      \pi_{n-1}^\BC\otimes \tau
      \;\oplus\;
      \pi_{n-3}^\BC\otimes \tau^{2}
      \;\oplus\;
      \pi_{n-3}^\BC\otimes \tau
      \;\oplus\;
      \tau^{1-n/2}
      \;\oplus\;
      \bigoplus_{i=1}^{(n-6)/8}
        \bigoplus_{k=1}^{4}
          \bigl(\pi_{5}^\BC\otimes \tau^{8i - 2 + k-n/2}\bigr)
      $ 
    \\ 
\hline
\multirow{6}{*}{$7$} 
    & $\equiv 0 \pmod{12}$ 
    & $\displaystyle
      \tau^{n/2}
      \;\oplus\;
      \bigoplus_{i=1}^{n/12}
        \bigoplus_{k=1}^{6}
          \bigl(\pi_{7}^\BC\otimes \tau^{12i + k - 9-n/2}\bigr)
      $ 
    \\ 
\cline{2-3}
    & $\equiv 4 \pmod{12}$ 
    & $\displaystyle
      \tau^{n/2}
      \;\oplus\;
      \pi_{n-1}^\BC\otimes \tau
      \;\oplus\;
      \pi_{n-1}^\BC
      \;\oplus\;
      \bigoplus_{i=1}^{(n-4)/12}
        \bigoplus_{k=1}^{6}
          \bigl(\pi_{7}^\BC\otimes \tau^{12i + k - 7-n/2}\bigr)
      $ 
    \\ 
\cline{2-3}
    & $\equiv 8 \pmod{12}$ 
    & $\displaystyle
      \tau^{n/2}
      \;\oplus\;
      \bigoplus_{\ell=1}^{4}
        \bigl(\pi_{n-3}^\BC\otimes \tau^{\ell-2}\bigr)
      \;\oplus\;
      \bigoplus_{i=1}^{(n-8)/12}
        \bigoplus_{j=1}^{6}
          \bigl(\pi_{7}^\BC\otimes \tau^{12i + j-5-n/2}\bigr)
      $ 
    \\ 
\cline{2-3}
    & $\equiv 2 \pmod{12}$ 
    & $\displaystyle
      \pi_{n-1}^\BC\otimes \tau
      \;\oplus\;
      \tau^{1-n/2}
      \;\oplus\;
      \bigoplus_{i=1}^{(n-2)/12}
        \bigoplus_{k=1}^{6}
          \bigl(\pi_{7}^\BC\otimes \tau^{12i + k - 7-n/2}\bigr)
      $ 
    \\ 
\cline{2-3}
    & $\equiv 6 \pmod{12}$ 
    & $\displaystyle
      \pi_{n-1}^\BC\otimes \tau
      \;\oplus\;
      \pi_{n-3}^\BC\otimes \tau^{2}
      \;\oplus\;
      \pi_{n-3}^\BC\otimes \tau
      \;\oplus\;
      \tau^{1-n/2}
      \;\oplus\;
      \bigoplus_{i=1}^{(n-6)/12}
        \bigoplus_{j=1}^{6}
          \bigl(\pi_{7}^\BC\otimes \tau^{12i + j - 5-n/2}\bigr)
      $ 
    \\ 
\cline{2-3}
    & $\equiv 10 \pmod{12}$ 
    & $\displaystyle
      \pi_{n-1}^\BC\otimes \tau
      \;\oplus\;
      \bigoplus_{\ell=1}^{4}
        \bigl(\pi_{n-5}^\BC\otimes \tau^{\ell - 1}\bigr)
      \;\oplus\;
      \tau^{1-n/2}
      \;\oplus\;
      \bigoplus_{i=1}^{(n-10)/12}
        \bigoplus_{j=1}^{6}
          \bigl(\pi_{7}^\BC\otimes \tau^{12i + j - 3-n/2}\bigr)
      $ 
    \\ 
\hline
\end{tabular}
}
\end{table}

\subsection{$A$-packets for finite places}\label{subsection_local_packets_finite_places}
In this subsection, we describe the $A$-packets $\Pi(\psi_{n,v})$ for finite places $v$.
In the following lemma, Theorem \ref{thm:normalized_newform} plays a key role.
Recall that $G_N^\vep$ be the unitary group for the Hermitian space $(W, \langle\ ,\ \rangle_\vep)$ of $\dim W=N$ over $E_v$ such that $\alpha_v(W, \langle\ ,\ \rangle_\vep) = \vep$.

\begin{lem}
\label{lem:L-parameter of BC}
Let $p$ be a rational prime and $v$ be a place of $E$ above $p$.
Let $\pi_{f,p}$ be the representation of $\GL_2(\bbQ_p)$ defined as (\ref{def_pi_f_p}).
\begin{enumerate}
    \item $\pi_{f,p}$ is irreducible tempered and its $L$-parameter is given by
\[
\mu_p \oplus \mu_p^{-1}\omega_{E/\bbQ, p},
\]
and the $L$-parameter of $\pi_{f,p}^\BC$
\[
\mu_p|_{L_{E_v}} \oplus \mu_{p}^{-1}|_{L_{E_v}}.
\]
Here, $\mu_p$ (resp. $\mu_p|_{L_{E_v}}$) is a unitary unramified character of $L_{\bbQ_p}$ (resp. $L_{E_v}$).
\item The $A$-parameter $\psi_{n,v}$ decomposes as
    \[
    \psi_{n,v} = m (\mathbf{1} \boxtimes S_1) \oplus \bigoplus_{i=1}^{(n+1-m)/2} \left(\sigma_i\boxtimes S_1 \oplus {\sigma_i^{c,\vee}} \boxtimes S_1\right)
    \]
for some nontrivial unramified characters $\sigma_i$ of $L_{E_v}$.
Moreover, $\sigma_i$ are not quadratic when $v$ is ramified.
\end{enumerate}
\end{lem}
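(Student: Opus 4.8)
The plan is to prove (1) by a direct computation with principal series of $\GL_2(\bbQ_p)$, then deduce (2) by localizing the global parameters of Table \ref{table:global A parameter} and invoking (1) at the ramified primes; Proposition \ref{prop:normalized newform} (or Conjecture \ref{conj_normalized_newform} for general odd $D$) will be used only at the last step. For (1): since $p\mid D$ and $D$ is odd, $p$ ramifies in $E$, so $v$ is the unique place above $p$ and $\omega_{E/\bbQ,p}=\omega_{E_v/\bbQ_p}$ is a ramified quadratic character of $\bbQ_p^{\times}$. In $\pi_{f,p}=\Ind_B^{\GL_2(\bbQ_p)}(\mu_p\boxtimes\mu_p^{-1}\omega_{E/\bbQ,p})$ the ratio of the two inducing characters is $\mu_p^{2}\omega_{E/\bbQ,p}$, which is ramified and hence different from $|\cdot|_p^{\pm 1}$; by the standard reducibility criterion for normalized principal series of $\GL_2(\bbQ_p)$ the representation $\pi_{f,p}$ is irreducible, so its $L$-parameter is the direct sum $\mu_p\oplus\mu_p^{-1}\omega_{E/\bbQ,p}$ of the inducing characters (with trivial $\SL_2(\bbC)$-part), viewed as characters of $\calW_{\bbQ_p}$ via the reciprocity maps of \eqref{comm_diag_weil_gp}. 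By the definition of the base change lift, the $L$-parameter of $\pi_{f,p}^{\BC}$ is the restriction of this along $L_{E_v}\hookrightarrow L_{\bbQ_p}$, namely $\mu_p|_{L_{E_v}}\oplus(\mu_p^{-1}\omega_{E/\bbQ,p})|_{L_{E_v}}$. The Galois character $\omega_{E/\bbQ,p}$ cuts out $E_v$, hence restricts trivially to $\calW_{E_v}$ (equivalently $\omega_{E_v/\bbQ_p}\circ N_{E_v/\bbQ_p}=\mathbf 1$), so this becomes $\mu_p|_{L_{E_v}}\oplus\mu_p^{-1}|_{L_{E_v}}$; finally $\mu_p|_{L_{E_v}}$ corresponds to $\mu_p\circ N_{E_v/\bbQ_p}$ on $E_v^{\times}$, which is unramified since $N_{E_v/\bbQ_p}(\OO_{E_v}^{\times})\subset\bbZ_p^{\times}$ and $\mu_p$ is unramified.

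For (2), fix a nonsplit finite place $v$ above $p$. Each $\psi_n$ of Table \ref{table:global A parameter} is generic, so $\psi_{n,v}=\bigoplus_i\phi_{i,v}\boxtimes S_1$ with $\phi_{i,v}$ the $L$-parameter of $\Phi_{i,v}$, where each $\Phi_i$ has the form $\mathbf 1$, $\tau^{\ell}$, or $\pi_k^{\BC}\otimes\tau^{\ell}$. The first two localize to unramified characters of $L_{E_v}$, as $\tau$ is unramified at every finite place. For $\pi_k^{\BC}\otimes\tau^{\ell}$: if $p\nmid D$ then $\pi_{k,p}$ is unramified, so $\pi^{\BC}_{k,v}$ is unramified with $L$-parameter $\sigma\oplus\sigma^{-1}$ for an unramified character $\sigma$ of $L_{E_v}$ (using that $\pi_k$ has central character $\omega_{E/\bbQ}$, so $\pi_k^{\BC}$ has trivial central character, since $\omega_{E/\bbQ}\circ N_{E/\bbQ}=\mathbf 1$); if $p\mid D$, part (1) gives the $L$-parameter $\mu_p|_{L_{E_v}}\oplus\mu_p^{-1}|_{L_{E_v}}$, again of the form $\sigma\oplus\sigma^{-1}$ with $\sigma$ unramified. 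Since $v$ is nonsplit and $\sigma$ is unramified, $\sigma^{c}=\sigma$, and since $\tau$ is conjugate self-dual of sign $+1$, $\tau_v^{c,\vee}=\tau_v$; hence $\phi_{i,v}=(\sigma\tau_v^{\ell})\oplus(\sigma\tau_v^{\ell})^{c,\vee}$. Thus $\psi_{n,v}$ is a direct sum of unramified characters of $L_{E_v}$.

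Now, because $\tau_v$ is conjugate self-dual and unramified, Lemma \ref{lem_conj_self_dual_char} forces $\tau_v$ to be $\mathbf 1$ or the nontrivial unramified quadratic character, so $\tau_v^{2}=\mathbf 1$; as $n$ is even, the unique one-dimensional ``bare'' summand $\tau^{n}$ (or $\mathbf 1$) of $\psi_n$ localizes to the trivial character and contributes to the multiplicity $m$ of $\mathbf 1$ in $\psi_{n,v}$, while the remaining constituents pair up as $\sigma_i\boxtimes S_1\oplus\sigma_i^{c,\vee}\boxtimes S_1$ with $\sigma_i$ nontrivial unramified; the dimension identity $m+2\cdot\frac{n+1-m}{2}=n+1$ then gives the stated shape, with $m$ odd. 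Finally, suppose $v$ is ramified, so $p\mid D$; since $\tau_v$ has order at most $2$, a constituent $\sigma\tau_v^{\ell}$ coming from a base-change summand is quadratic if and only if $\sigma=\mu_p|_{L_{E_v}}$ is, equivalently $\mu_p(p)^{2}=1$, equivalently (by part (1) and $a_f(p)=p^{(k-1)/2}\mu_p(p)$) $a_f(p)=\pm p^{(k-1)/2}$; but the newform $f$ furnished by Proposition \ref{prop:normalized newform} is chosen precisely so that $a_f(p)\neq\pm p^{(k-1)/2}$ for all $p\mid D$, so no $\sigma_i$ is quadratic at a ramified place.

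The delicate point is exactly this last step. A nontrivial quadratic unramified character of $E_v^{\times}$ over a ramified $E_v/\bbQ_p$ is conjugate self-dual of sign $+1$ by Lemma \ref{lem_conj_self_dual_char}(3), so its appearance in $\psi_{n,v}$ would enlarge the component group $\calS_{\psi_{n,v}}$ and change the image of $z_{\psi_v}$, thereby obstructing the later verification of Arthur's sign condition and of the existence of $K_v$-fixed vectors for all maximal compact subgroups. Overcoming this is precisely the role of Proposition \ref{prop:normalized newform} (and the reason $D$ is assumed prime); the remainder of the lemma is the $\GL_2$ computation of (1) together with a routine, if slightly tedious, matching of constituents against Table \ref{table:global A parameter}.
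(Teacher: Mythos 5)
Your proof is correct and follows essentially the same route as the paper's: the paper also proves (2) by localizing the constituents of Table \ref{table:global A parameter}, observing that the bare $\tau$-power localizes trivially, and using $a_f(p)=\mu_p(p)p^{(k-1)/2}$ together with Proposition \ref{prop:normalized newform} to rule out quadratic $\sigma_i$ at ramified places. The only difference is cosmetic: for (1) you give the principal-series irreducibility criterion and the restriction computation in full, whereas the paper simply cites Harris and Kudla for the well-known $L$-parameter of a ramified principal series and notes that $\omega_{E/\bbQ,p}$ dies on $L_{E_v}$.
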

\begin{proof}
(1)    The irreducibility of $\pi_{f,p}$ and its $L$-parameter are well-known; for example see \cite[Subsection 1.2.2.1]{2007_Harris_LLC} and \cite[Subsection 5.1]{1994_Kudla_LLC}.
    Since $\omega_{E_v/\bbQ_p}$ is trivial on $L_{E_v}$ by the definition, the $L$-parameter of $\pi_{f,p}^\BC$ is given by
    \[
    \mu_p|_{L_{E_v}} \oplus \mu_{p}^{-1}|_{L_{E_v}}.
    \]

    (2) Consider the localization of $\psi_n$.
    Constituents of $\psi_n$ are of the form $\tau^a$ or $\pi^\BC_k \otimes \tau^b$ with $a$ even.
    Set $\phi_v$ be the $L$-parameter of $\GL_1(E_v)$ associated with the $v$-component $\tau_v$.
    Since $\tau_v(p)=1$, the character $\phi_v$ is quadratic by Lemma \ref{lem_conj_self_dual_char} (1) when $v$ is nonsplit.
    This shows that the $v$-component of $\tau^a$ is trivial.
    For unramified $v$, the statement follows from the description of $\psi_n$ and the above discussion.
    Suppose that $v$ is ramified and $p$ is the prime number lying below $v$.
    For $\pi_k^\BC$, let $f$ be the normalized newform in $S_k(D, \omega_{E/\bbQ})$.
    By (1), the $L$-parameter of $\pi_k$ is of the form
    \[
    \sigma_k \oplus \sigma_k^{-1}\omega_{E/\bbQ, p}.
    \]
    By $a_f(p) = \sigma_k(p)p^{(k-1)/2}$ and the choice of $f$, the character $\sigma_k$ is not of quadratic if $v$ is ramified.
    Since $\sigma_k$ is unramified, the restriction $\sigma_k|_{L_{E_v}}$ is not of quadratic.
    This shows that the $L$-parameter for $\pi_k^\BC \otimes \tau^b$ is of the form
    \[
    \sigma_k|_{L_{E_v}} \otimes \phi_v^b \oplus \sigma_k^{-1}|_{L_{E_v}} \otimes \phi_v^b
    \]
    such that $\sigma_k|_{L_{E_v}} \otimes \phi_v^b$ and $\sigma_k^{-1}|_{L_{E_v}} \otimes \phi_v^b$ are not of quadratic, since $\phi_v$ is quadratic.
    Then, the statement follows from the description of $\psi_n$.
\end{proof}

\begin{lem}\label{lemma_nonarch_S_gp}
    Let $v$ be a finite place and $\psi_{n,v}$ be the $v$-component of the global $A$-parameter $\psi_n$.
    We then have
    \[
    \calS_{\psi_{n,v}} 
    =
    \begin{cases}
        \{1\} & \text{if $v$ is split;}\\
        \{1, z_{\psi_{n,v}}\} &\text{if $v$ is nonsplit.}
    \end{cases}
    \]
    Moreover, for any maximal compact subgroup $K_v$ of $G_{n+1}^\vep$, if $v$ is split, the unique representation in $\Pi(\psi_{n,v})$ contains a nonzero fixed vector for $K_v$ and if $v$ is nonsplit, the representation $\pi_\vep$ so that $\iota_{\frakw_v}(\pi_\vep) =\eta_{\epsilon,v}$ with $\eta_{\epsilon,v}:z_{\psi_{n,v}} \mapsto \vep$ has a nonzero fixed vector for $K_v$.
\end{lem}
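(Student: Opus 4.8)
The plan is to read the component group off the explicit shape of $\psi_{n,v}$ supplied by Lemma~\ref{lem:L-parameter of BC}(2), and then to realize $\pi_\vep$ (and, at a split place, the unique member of $\Pi(\psi_{n,v})$) as a \emph{full} unramified principal series, for which a vector fixed by an arbitrary maximal compact subgroup can be written down directly.

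For the component group, Lemma~\ref{lem:L-parameter of BC}(2) gives $\psi_{n,v}=m(\mathbf{1}\boxtimes S_1)\oplus\bigoplus_{i=1}^{(n+1-m)/2}\bigl(\sigma_i\boxtimes S_1\oplus\sigma_i^{c,\vee}\boxtimes S_1\bigr)$ with the $\sigma_i$ nontrivial unramified characters of $L_{E_v}$, moreover non-quadratic when $v$ is ramified; since $n+1-m$ is even and $n+1$ is odd, $m$ is odd. If $v$ splits, the relevant group is $\GL_{n+1}(\bbQ_p)$, whose parameters have connected (general-linear) centralizers, so $\calS_{\psi_{n,v}}=\{1\}$. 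If $v$ is nonsplit, I would single out the conjugate-self-dual summands of sign $(-1)^{n}=+1$: the character $\mathbf{1}$ qualifies by Lemma~\ref{lem_conj_self_dual_char}(1), whereas a nontrivial unramified character can be conjugate self-dual only if it is the quadratic unramified character, which is excluded at ramified $v$ and carries sign $-1\neq(-1)^{n}$ at inert $v$ by Lemma~\ref{lem_conj_self_dual_char}(2),(3); hence no $\sigma_i$ contributes. Thus the centralizer of the image is $\mathrm{O}_m(\bbC)$ times a connected group, $\calS_{\psi_{n,v}}=\pi_0(\mathrm{O}_m(\bbC))=\bbZ/2\bbZ$ generated by the basis element $e$ attached to $\mathbf{1}$, and $z_{\psi_{n,v}}=m\,e=e$ because $m$ is odd, so $\calS_{\psi_{n,v}}=\{1,z_{\psi_{n,v}}\}$.

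Next I would reduce to a principal series. First note that $\psi_{n,v}$ is unramified at every finite $v$: each $\tau^{a}$ localizes to an unramified character, and the base change of $\pi_k$ to $E_v$ is unramified because the ramified quadratic character $\omega_{E_v/\bbQ_p}$ becomes trivial on $L_{E_v}$. Since $\psi_n$ is generic, $\psi_{n,v}$ is generic and $\Pi(\psi_{n,v})=\Pi(\phi_{n,v})$ for the associated tempered $L$-parameter $\phi_{n,v}$ (temperedness of the constituents being known for holomorphic newforms and for unitary Hecke characters), with $\calS_{\psi_{n,v}}=\calS_{\phi_{n,v}}$. At a nonsplit $v$, Proposition~\ref{prop_LLC_unitary}(3),(4) applies: the sign-$+1$ part of $\phi_{n,v}$ is $\phi_0=m\mathbf{1}$, an $L$-parameter of $\U_m^\vep$ with $m=(n+1)-2k$, and the remainder is carried by $\phi_1=\bigoplus_i\sigma_i$, an $L$-parameter of $\GL_k(E_v)$. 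By the previous step the embedding $\calS_{\phi_0}\hookrightarrow\calS_{\phi_{n,v}}$ is an isomorphism, so the induced representation in Proposition~\ref{prop_LLC_unitary}(4) is irreducible and equals $\pi_\vep=\Ind_P^{G_{n+1}^\vep}(\sigma\boxtimes\pi_{0,\vep})$, where $\sigma$ is the unramified principal series of $\GL_k(E_v)$ attached to $\phi_1$ (irreducible, as the $\sigma_i$ are unitary) and $\pi_{0,\vep}$ is the unique member of $\Pi(\phi_0)$ that is a representation of $\U_m^\vep$. Because $\phi_0=m\mathbf{1}$ is the parameter of $\Ind_B^{\U_m^\vep}(\mathbf{1})$ and $\calS_{\phi_0}^{+}=\{1\}$, that principal series is irreducible, hence $\pi_{0,\vep}=\Ind_B^{\U_m^\vep}(\mathbf{1})$ (for both $\vep=\pm 1$); transitivity of normalized induction then yields $\pi_\vep=\Ind_B^{G_{n+1}^\vep}(\chi_\vep)$ for an unramified character $\chi_\vep$ of the maximal torus. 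At a split $v$, $\Pi(\psi_{n,v})$ is the singleton consisting of the $\GL_{n+1}(\bbZ_p)$-spherical representation of $\GL_{n+1}(\bbQ_p)$, by Proposition~\ref{prop_LLC_GL} and unramifiedness.

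The final step — the one I expect to need the most care — is to show that an unramified principal series $\Ind_B^G(\chi)$ of a quasi-split $p$-adic group $G$ has a nonzero vector fixed by \emph{every} maximal compact subgroup $K$, not only the special ones (for which $G=BK$ makes this obvious). For a general, possibly non-special $K$ — which can occur for ramified unitary groups and non-hyperspecial vertices — I would argue on the flag variety: $B\backslash G$ is compact and $K$ acts on it with finitely many orbits, each open and hence clopen, so $\Ind_B^G(\chi)|_K$ is the finite direct sum over these orbits of the spaces of sections of the associated line bundle $\mathcal{L}_\chi$. The orbit $K\cdot[B]$ has stabilizer $B\cap K$, and since $\chi$ and $\delta_B^{1/2}$ are trivial on the (unique) maximal compact subgroup of the torus while $\delta_B^{1/2}$ is trivial on the unipotent radical, the defining cocycle of $\mathcal{L}_\chi$ restricts trivially to $B\cap K$; thus the contribution of this orbit is $\Ind_{B\cap K}^{K}(\mathbf{1})$, which contains the trivial $K$-representation. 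This produces the required $K_v$-fixed vector in $\pi_\vep$ at every nonsplit $v$, and the same argument (or the conjugacy of all maximal compacts of $\GL_{n+1}(\bbQ_p)$ to $\GL_{n+1}(\bbZ_p)$) settles the split case. Together with the component-group computation this proves the lemma.
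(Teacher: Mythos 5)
Your proposal is correct and follows essentially the same route as the paper: read off $\calS_{\psi_{n,v}}$ from Lemma~\ref{lem:L-parameter of BC}(2) via the sign analysis of Lemma~\ref{lem_conj_self_dual_char}, realize $\pi_\vep$ as a full irreducible unramified principal series, and produce a $K_v$-fixed vector supported on the open $K_v$-orbit through $[B]$. Your flag-variety orbit decomposition is just a more carefully justified phrasing of the paper's direct construction of the function on $N_B T K_v$ extended by zero, and your passage through the Levi $\GL_k\times\U_m$ with transitivity of induction recovers the paper's $\pi_\vep = \Ind_B^{G_{n+1}^\vep}(\sigma)$.
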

\begin{proof}
Let $p$ be the rational prime lying below $v$.
For a split $v$, since the group $\bbG(\bbQ_p)$ is isomorphic to $\GL_{n+1}(\bbQ_p)$, the group $\calS_{\psi_{n,v}}$ is trivial by Proposition \ref{prop_LLC_GL} and Lemma \ref{lem:commutative diagram L and A packets}.
Since the parameter $\psi_{n,v}$ is unramified, the representation $\pi_\vep$ is an unramified representation of $\GL_{n+1}(\bbQ_p)$ by Lemma \ref{lem:commutative diagram L and A packets}.
Then, the representation $\pi_\vep$ contains a nonzero fixed vector for $K_v$ by the fact that all the maximal compact subgroups of $\GL_{n+1}(\bbQ_p)$ are conjugate.

We consider the case where $v$ is nonsplit.
First, we work on an unramified $v$.
By Lemma \ref{lem:L-parameter of BC} (2), the $A$-parameter $\psi_{n,v}$ can be decomposed as
\[
\psi_{n,v} = m (\mathbf{1} \boxtimes S_1) \oplus \bigoplus_{i=1}^{(n+1-m)/2} \left(\sigma_i\boxtimes S_1 \oplus {\sigma_i^{c,\vee}} \boxtimes S_1\right)
\]
for some nontrivial unramified characters $\sigma_i$ and an odd integer $m$.
Since the conjugate self-dual unramified character of sign $+1$ is only the trivial character by Lemma \ref{lem_conj_self_dual_char} (2), we have $\calS_{\psi_{n,v}}^+ = \{1\}$ and $\calS_{\psi_{n,v}} = \{1, z_{\psi_{n,v}}\}$.
Take a Borel subgroup $B$ of $G_{n+1}^\vep$ with the unipotent radical $N_B$ and a maximal torus $T$ contained in $B$.
Let 
\[
\sigma \defeq \chi_{\sigma_1} \boxtimes \cdots \boxtimes \chi_{\sigma_{(n+1-m)/2}} \boxtimes \mathbf{1} \boxtimes \cdots \boxtimes \mathbf{1} \boxtimes \mathbf{1}_{G_1^\vep}
\]
be the character of $T$ defined by the components $\mu_i$ of $\psi_{n,v}$.
Here, $\chi_{\sigma_i}$ is the character of $E^\times$ associated to $\sigma_{i}$ induced from (\ref{comm_diag_weil_gp}).
Then the packet $\Pi(\psi_{n,v})$ consists of the representations 
\[
\pi_\vep \defeq \Ind_{B}^{G_{n+1}^\vep}(\sigma), \qquad \vep = \pm1
\]
by Proposition \ref{prop_LLC_unitary} (4) and Lemma \ref{lem:commutative diagram L and A packets}.
Note that $\pi_\vep$ is irreducible, for example see \cite[p.~127]{1984_keys_irreducibility}.
The representation $\pi_\vep$ corresponds to the character of $\calS_{\psi_v}$ defined by $z_{\psi_{n,v}} \mapsto \vep$.
We show that $\pi_\vep$ has a non-zero vector for any maximal compact subgroup $K_v$ of $G_{n+1}^\vep$ by construction a $K_v$-spherical element in $\pi_\vep$.
Let $f$ be a $\bbC$-valued function on $G_{n+1}^\vep$ satisfying
\[
f(g) 
\defeq
\begin{cases}
    \sigma(t)&\text{if $g=ntk \in N_BTK_v$};\\
    0&\text{if $g \not\in N_BTK_v$}.
\end{cases}
\]
Straightforward computation shows that this is a well-defined element in $\pi_\vep$ and fixed by $K_v$ by the definition of the induced representation (\ref{def:induced representation}).
Hence, $\pi_\vep$ has a non-zero vector for any maximal compact subgroup $K_v$ of $G_{n+1}^\vep$.

Second, let us assume that $v$ is ramified.
By Lemma \ref{lem:L-parameter of BC} (2), the $A$-parameter $\psi_{n,v}$ can be decomposed as
\[
\psi_{n,v} = m (\mathbf{1} \boxtimes S_1) \oplus \bigoplus_{i=1}^{(n+1-m)/2} \left(\sigma_i\boxtimes S_1 \oplus {\sigma_i^{c,\vee}} \boxtimes S_1\right)
\]
for some nonquadratic unramified characters $\sigma_i$ and an odd integer $m$.
Since the conjugate self-dual unramified character of sign $+1$ is the quadratic unramified characters by Lemma \ref{lem_conj_self_dual_char} (3), we have $\calS_{\psi_{n,v}}^+ = \{1\}$ and $\calS_{\psi_{n,v}} = \{1, z_{\psi_{n,v}}\}$.
The remaining proof is the same as the unramified case.
This completes the proof.
\end{proof}

Let us give an example and explain why we need the analysis of Fourier coefficients as in Theorem \ref{thm:normalized_newform}.
We consider the case $\U_{E_v/\bbQ_v}(3)$.
Set
\[
K_v = \U_{E_v/\bbQ_v}(3) \cap \Mat_{3,3}(\mathscr{O}_{E_v}), \qquad
K_v' = 
\begin{pmatrix}
    \mathscr{O}_{E_v}^\times & \frakp_{E_v} & \frakp_{E_v}\\
    \mathscr{O}_{E_v} & \mathscr{O}_{E_v}^\times & \frakp_{E_v}\\
    \frakp_{E_v}^{-1} & \mathscr{O}_{E_v} & \mathscr{O}_{E_v}^\times
\end{pmatrix}.
\]
The compact subgroup $K_v$ is maximal and absolutely special, but $K'_v$ is maximal and non-absolutely special.
The following lemma follows from the intertwining relation as in \cite[(LIR)]{AGIKMS_LIR_2024}.
We omit the details.

\begin{lem}\label{lem_U_3_obstruction}
    Let $v$ be a ramified place of $E$ and $\mu_v$ be the nontrivial quadratic unramified character of $L_{E_v} \times \SL_2(\bbC)$.
    Consider the $A$-parameter $\psi_v$ for $\U_{E_v/\bbQ_v}(3)$ of the form
    \[
    \psi_v = 2\mu_v \oplus \mathbf{1}.
    \]
    Then, $\calS_{\psi_v}^+$ is of order $2$.
    The representation $\pi_{(\psi_v, \mathbf{1})}$ has a nontrivial $K_v'$-fixed vector, but does not have a nontrivial $K_v$-fixed vector.
    Similarly, for the nontrivial character $\sigma_v$ of $\calS_{\psi_v}^+$, the representation $\pi_{(\psi_v, \sigma_v)}$ has a nontrivial $K_v$-fixed vector, but does not have a nontrivial $K_v'$-fixed vector.
\end{lem}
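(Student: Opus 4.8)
The plan is to pin down the $A$-packet $\Pi(\psi_v)$ via Proposition~\ref{prop_LLC_unitary}, to observe that the resulting induced representation has nonzero fixed vectors under each of the two maximal compacts with each such fixed space concentrated in a single constituent, and then to decide the matching of constituents with maximal compacts using the local intertwining relation of \cite{AGIKMS_LIR_2024}; this last step is the only genuine difficulty. First I would unwind the component groups. Since $v$ is ramified, Lemma~\ref{lem_conj_self_dual_char}(3) shows that both $\mathbf{1}$ and the nontrivial quadratic unramified character $\mu_v$ of $L_{E_v}$ are conjugate self-dual with sign $+1$; hence, in the notation of Section~\ref{section_LLC_AC}, the decomposition $\psi_v = 2\mu_v \oplus \mathbf{1}$ has $I^+ = \{\mu_v,\mathbf{1}\}$ and $I^- = J = \emptyset$, so $\calS_{\psi_v} = (\bbZ/2\bbZ)e_{\mu_v} \oplus (\bbZ/2\bbZ)e_{\mathbf{1}}$. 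As $\mu_v$ occurs with multiplicity $2$, the homomorphism $e_i \mapsto (-1)^{m_i\dim\phi_i}$ sends $e_{\mu_v}\mapsto +1$ and $e_{\mathbf{1}}\mapsto -1$, whence $\calS_{\psi_v}^+ = \langle e_{\mu_v}\rangle$ has order $2$ and $z_{\psi_v} = e_{\mathbf{1}}$ generates a complement. This proves the first assertion.

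Next, $\psi_v$ is generic, so $\Pi(\psi_v)$ is the $L$-packet of $\phi_{\psi_v}$ (Lemma~\ref{lem:commutative diagram L and A packets}). Writing $\phi_{\psi_v} = \phi_1 \oplus \phi_0 \oplus (\phi_1^c)^\vee$ with $\phi_1 = \mu_v$, regarded as the $L$-parameter of $\GL_1(E_v)$ attached to the quadratic unramified character $\chi_{\mu_v}$ of $E_v^\times$, and $\phi_0 = \mathbf{1}$, the trivial $L$-parameter of the one-dimensional unitary group for the form $W_0$ with $\alpha_v(W_0) = +1$, Proposition~\ref{prop_LLC_unitary}(3),(4) identifies the members of $\Pi(\psi_v)$ lying on $\U_{E_v/\bbQ_v}(3)$ with the irreducible constituents of $\Ind_P^{\U_{E_v/\bbQ_v}(3)}(\chi_{\mu_v}\boxtimes\pi_0)$, where $P$ is the maximal parabolic with Levi $\GL_1(E_v)\times\U_{E_v/\bbQ_v}(1)$ and $\pi_0$ is the trivial character of the compact group $\U_{E_v/\bbQ_v}(1)$; and part~(4) gives the direct-sum decomposition $\Ind_P^{\U_{E_v/\bbQ_v}(3)}(\chi_{\mu_v}\boxtimes\pi_0) = \pi(\psi_v,\mathbf{1}) \oplus \pi(\psi_v,\sigma_v)$ into the two inequivalent irreducible tempered constituents, indexed by the two characters $\mathbf{1},\sigma_v$ of $\calS_{\psi_v}$ restricting trivially to $\calS_{\phi_0}$ — equivalently, the two characters trivial on $z_{\psi_v}$, distinguished by the value $\pm 1$ of $e_{\mu_v}$. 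Then I would count fixed vectors: since $K_v$ is absolutely special and $K_v'$ is special, the Iwasawa decomposition $\U_{E_v/\bbQ_v}(3) = PK$ holds for each $K\in\{K_v,K_v'\}$, and because $\chi_{\mu_v}$ is unramified, $\pi_0$ is trivial, and $\U_{E_v/\bbQ_v}(1)$ is compact, this yields $\dim\bigl(\Ind_P^{\U_{E_v/\bbQ_v}(3)}(\chi_{\mu_v}\boxtimes\pi_0)\bigr)^{K} = 1$; hence exactly one of $\pi(\psi_v,\mathbf{1}),\pi(\psi_v,\sigma_v)$ carries a (one-dimensional space of) $K_v$-fixed vectors and the other carries none, and likewise for $K_v'$. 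So everything reduces to deciding which constituent goes with which maximal compact.

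The hard part will be precisely this matching. For the nontrivial element $w$ of the relative Weyl group $W(M)\cong\bbZ/2\bbZ$ one has $w(\chi_{\mu_v}\boxtimes\pi_0)\cong\chi_{\mu_v}\boxtimes\pi_0$, so the normalized self-intertwining operator $R$ on $\Ind_P^{\U_{E_v/\bbQ_v}(3)}(\chi_{\mu_v}\boxtimes\pi_0)$ is defined, is an involution, and preserves both the $K_v$-fixed line and the $K_v'$-fixed line; hence the spherical vectors $f_{K_v}$ and $f_{K_v'}$ are $R$-eigenvectors, each lying inside one of $\pi(\psi_v,\mathbf{1}),\pi(\psi_v,\sigma_v)$. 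The content of the lemma is that they lie in \emph{different} constituents, and that the $K_v$-spherical one is $\pi(\psi_v,\sigma_v)$. The plan to establish this is to invoke the local intertwining relation (LIR) of \cite{AGIKMS_LIR_2024}, by which $R$ acts on $\pi(\psi_v,\eta)$ through the value of $\eta$ at the element of $\calS_{\psi_v}$ attached to $w$ — necessarily $e_{\mu_v}$, since $R$ cannot be scalar on the reducible induced representation — and then to compute the $R$-eigenvalue of $f_{K_v}$, resp.\ $f_{K_v'}$, explicitly, finding $-1$, resp.\ $+1$. This computation, which uses the realizations of $K_v$ and $K_v'$ as stabilizers of the relevant $\OO_{E_v}$-lattices, an explicit representative of $w$, and the normalization of $R$ relative to the fixed Whittaker datum $\frakw_v$, is standard but notationally heavy, and is exactly the verification suppressed in the statement above.
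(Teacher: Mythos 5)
Your proposal is correct and takes essentially the same route the paper does: the preliminary reductions (computing $\calS_{\psi_v}^+$ from Lemma~\ref{lem_conj_self_dual_char}(3), identifying $\Pi(\psi_v)$ with the two constituents of $\Ind_P^{G_3^+}(\chi_{\mu_v}\boxtimes\mathbf{1})$ via Proposition~\ref{prop_LLC_unitary}(3)--(4), and using the Iwasawa decomposition for each special maximal compact to get a one-dimensional $K$-fixed line) are carried out cleanly, and the decisive sign-matching is, exactly as the paper states, deferred to the local intertwining relation of \cite{AGIKMS_LIR_2024}. Both you and the paper omit that final eigenvalue computation, so you have reconstructed the proof at the same level of detail the authors chose to present.
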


By Lemma \ref{lem_U_3_obstruction}, the representation $\pi_v$ that we construct does not have a nontrivial fixed vector for the open compact group defined to be the stabilizer of the lattice $L$, when the component group $\calS_{\psi_v}^+$ is nontrivial.
This means that the representation $\bigotimes_v'\pi_v$ is not of level one in terms of some lattice stabilizers.

\subsection{Proof of Theorem \ref{mainthm:existence of low slope automorphic form}}
\label{subsec:proof of the main theorem (cusp form)}

Now, we prove Theorem \ref{mainthm:existence of low slope automorphic form}.
Assuming Theorem \ref{thm:normalized_newform}, let $\psi_n$ be the global $A$-parameter as in Table \ref{table:global A parameter}.
Consider the $v$-component of $\psi_{n,v}$ (see Subsection \ref{subsec:Arthur's multiplicity formula}).
We take characters $\eta_v$ of $\calS_{\psi_{n,v}}$ as follows.
For a finite place $v$, let $\eta_v$ be the character such that trivial on $\calS_{\psi_{n,v}}^+$ and $\eta_v(z_{\psi_{n,v}}) \defeq \alpha_v(h_v)$.
To choose $\eta_\infty$ at the infinite place $\infty$, we need the following lemma.

\begin{lem}\label{lem_localized_A_packet_contain_LDS}
    The $A$-packet $\Pi(\psi_{n,\infty})$ contains a limit of a holomorphic discrete series representation of weight $n$.
    Moreover, such a representation is unique in $\Pi(\psi_{n,\infty})$.
\end{lem}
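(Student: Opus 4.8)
The plan is to make $\psi_{n,\infty}$ completely explicit and then read off the answer from the classification in Lemma~\ref{A_packet_hol_LDS}. First I would localize each summand of $\psi_n$ at the archimedean place. Since $\tau_\infty$ is the character $z\mapsto z/\overline z=\chi_2(z)$ and, by~\eqref{def_L_para_GL_2_DS_wt_k}, the base change $\pi^{\BC}_{k,\infty}$ has $L$-parameter $\chi_{k-1}\oplus\chi_{1-k}$, a summand $\tau^{a}$ contributes $\chi_{2a}\boxtimes S_1$ to $\psi_{n,\infty}$ while a summand $\pi^{\BC}_k\otimes\tau^{\ell}$ contributes $(\chi_{k-1+2\ell}\oplus\chi_{1-k+2\ell})\boxtimes S_1$. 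As $\psi_n$ is a generic global $A$-parameter, this presents $\psi_{n,\infty}=\bigoplus_{i=1}^{n+1}\chi_{t_i}\boxtimes S_1$; every weight $k$ appearing in Table~\ref{table:global A parameter} is odd and $n$ is even, so all $t_i$ are even integers, whence $s_i=0$ and $t_i+a_i+n+1=t_i+n+2$ is even, i.e.\ $\psi_{n,\infty}$ is of good parity. After reordering so that $t_1\geq t_2\geq\cdots$ (the secondary condition $a_i\geq a_{i+1}$ being automatic, as all $a_i=1$), Lemma~\ref{A_packet_hol_LDS} applies.

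The heart of the matter is then a row-by-row check that in every case the runs of consecutive even integers produced by the blocks of $\psi_n$ tile an interval of even integers and overlap in exactly one value at the top, so that, as a multiset,
\[
\{t_1,\dots,t_{n+1}\}=\{\,2M,\ 2M,\ 2M-2,\ \dots,\ 2M-2(n-1)\,\},
\]
where $M=n$ if $n\equiv 0\pmod 4$ and $M=n-1$ if $n\equiv 2\pmod 4$. (For example, when $D=19$ or $D>30$ and $n\equiv 0\pmod 4$, the blocks $\pi^{\BC}_3\otimes\tau^{4i-2}\oplus\pi^{\BC}_3\otimes\tau^{4i-1}$ with $1\le i\le n/4$ localize to the runs $\{8i-6,8i-4,8i-2,8i\}$, which partition $\{2,4,\dots,2n\}$, and $\tau^{n}$ supplies the second copy of $2n$.) Set $\lambda^{(n)}\defeq\bigl(M+\tfrac n2,\,M-\tfrac n2,\dots,M-\tfrac n2\bigr)\in\bbZ^{n+1}$; then $\lambda^{(n)}_1>\lambda^{(n)}_2=\cdots=\lambda^{(n)}_{n+1}$ and $\lambda^{(n)}_1-\lambda^{(n)}_2=n$, so by the criteria recalled just before Lemma~\ref{A_packet_hol_LDS} the module $L(\lambda^{(n)})$ is a limit of holomorphic discrete series whose lowest $K_\infty$-type is one-dimensional, i.e.\ of scalar type. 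Since all $a_i=1$, the infinitesimal character of $\psi_{n,\infty}$ prescribed by~\eqref{inf_char_A_param} is the multiset $\{t_i/2\}_i$, while the right-hand side of (1-A) for $\lambda^{(n)}$ is $\{M\}\sqcup\{M,M-1,\dots,M-n+1\}$; these coincide, so (1-A) holds. Condition (1-B) holds because $t_2,\dots,t_{n+1}=2M,2M-2,\dots,2M-2(n-1)$ are pairwise distinct, and (2-B) holds because with $a_1=1$ both sides there collapse to the single value $\{M\}$. By Lemma~\ref{A_packet_hol_LDS}(2), $L(\lambda^{(n)})\in\Pi(\psi_{n,\infty})$, which is the existence assertion. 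Moreover, $\psi_{n,\infty}$ being generic, Lemma~\ref{A_packet_hol_LDS}(4) together with Remark~\ref{rem:Whittaker datum} pins down $\iota_{\frakw_\infty}(L(\lambda^{(n)}))$; this is the character that will be used to choose $\eta_\infty$ in the proof of Theorem~\ref{mainthm:existence of low slope automorphic form}.

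Uniqueness is then immediate: a limit of holomorphic discrete series of scalar type is in particular a unitary lowest weight representation of $G=\U(1,n)$, and by the last assertion of Lemma~\ref{A_packet_hol_LDS}(3) each $A$-packet of good parity contains at most one unitary lowest weight representation. Combined with the existence just shown, $\Pi(\psi_{n,\infty})$ contains exactly one limit of holomorphic discrete series of scalar type.

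The hard part will be the computation in the second paragraph: verifying uniformly, across the residue classes of $n$ modulo $4$, $8$, and $12$ and the exceptional small discriminants $D=7,11,15,23$, that the localized blocks really tile the relevant interval of even integers with a single overlap at the top — equivalently, that the infinitesimal character of $\psi_{n,\infty}$ is the staircase multiset above. This is a lengthy but routine bookkeeping verification; the parameters $\psi_n$ in Table~\ref{table:global A parameter} are built precisely so that it goes through, so no new ideas should be needed beyond careful arithmetic.
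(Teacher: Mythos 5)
Your proof is correct and follows essentially the same route as the paper: localize $\psi_n$ at infinity using $\tau_\infty=\chi_2$ and \eqref{def_L_para_GL_2_DS_wt_k}, verify good parity, match the infinitesimal character of $\psi_{n,\infty}$ against that of the scalar-weight lowest weight module via conditions (1-A), (1-B) and (2-B) of Lemma~\ref{A_packet_hol_LDS}, and get uniqueness from the last assertion of Lemma~\ref{A_packet_hol_LDS}(3). The paper likewise only works out the $D>30$ (or $D=19$), $n\equiv 0\bmod 4$ case explicitly and asserts the rest; your explicit staircase description $\{2M,2M,2M-2,\dots\}$ and the unified formula $\lambda^{(n)}=(M+n/2,M-n/2,\dots,M-n/2)$ make the bookkeeping slightly cleaner, but the argument is the same.
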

\begin{proof}
    Since the archimedean component $\tau_\infty$ of $\tau$ is given by $z \mapsto z/\overline{z}$, we have $\tau_\infty = \chi_2$.
    For simplicity, in this proof, we shall present the computation for the top line in the table, which is the case where $n \equiv 0$ and $D=19$ or $D > 30$.
    A similar computation holds for other cases.
    Let $\psi_n = \tau^{n/2} \oplus \bigoplus_{i=1}^{(n-1)/4}(\pi^{\BC}_3 \otimes \tau^{4i-2-n/2} \oplus \pi^{\BC}_3 \otimes \tau^{4i-1-n/2})$. 
    From (\ref{def_L_para_GL_2_DS_wt_k}), we have
\begin{align*}
\psi_{n,\infty}
&=
\chi_{n} \boxtimes S_1 
\oplus \bigoplus_{i=1}^{(n-1)/4}
(
(\chi_2 \boxtimes S_1 \oplus \chi_{-2} \boxtimes S_1)
\otimes \chi_{8i-4-n} \oplus (\chi_2 \boxtimes S_1 \oplus \chi_{-2} \boxtimes S_1) \otimes \chi_{8i-2-n})\\
&=
\chi_{n} \boxtimes S_1 \oplus \bigoplus_{i=1}^{(n-1)/4}(\chi_{8i-2-n} \boxtimes S_1 \oplus \chi_{8i-6-n} \boxtimes S_1 \oplus \chi_{8i-n} \boxtimes S_1 \oplus \chi_{8i-4-n} \boxtimes S_1).
\end{align*}
Since all the subscripts of $\chi$ are even, $\psi_{n,\infty}$ has good parity.
The infinitesimal character of $\psi_{n,\infty}$ can be computed as 
\begin{align*}
&\{n/2\} \sqcup \bigsqcup_{i=1}^{n/4} \{(8i-2-n)/2, (8i-6-n)/2, (8i-n)/2, (8i-4-n)/2\}\\
&=
\{n/2\} \sqcup \{n/2,n/2-1, \ldots, 1-n/2\}.
\end{align*}
This is the same as the infinitesimal character of $L(n, 0,\ldots, 0)$.
Hence, by Lemma \ref{A_packet_hol_LDS} (2), the packet $\Pi(\psi_{n,\infty})$ contains $L(n,0,\ldots,0)$, which is a lowest weight representation of scalar weight $n$.
The last statement follows from Lemma \ref{A_packet_hol_LDS} (3).
\end{proof}
By Lemma \ref{lem_localized_A_packet_contain_LDS}, we may take $\eta_\infty$ to be the character corresponding to the limit of the holomorphic discrete series representation of scalar type in $\Pi(\psi_{n,\infty})$.
With the choice, the following holds.
\begin{prop}
\label{prop:eta and psi satisfy the appropriate properties}
With the above choice of $\psi_n$ and $\eta_v$, we have the following.
    \begin{enumerate}
    \item For a finite place, we have $\eta_v(z_{\psi_{n,v}}) = \alpha_v(h_v)$.
    \item For the infinite place, we have $\eta_\infty \circ \Delta_\infty = \mathbf{1}$.
    \item For any finite place, the representation $\pi_{(\psi_{n,v}, \eta_v)}$ is spherical for any maximal compact subgroup $K$.
\end{enumerate}
\end{prop}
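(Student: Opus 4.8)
The plan is to read off (1) and (3) directly from the local $A$-packet computation of Lemma~\ref{lemma_nonarch_S_gp}, and to attack (2) — the assertion for which the parameters of Table~\ref{table:global A parameter} were designed — by a uniform argument for the $\GL_2$-type constituents of $\psi_n$ together with a short, row-by-row check for the $\GL_1$-type ones.

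For (1): since Lemma~\ref{lemma_nonarch_S_gp} gives $\calS^+_{\psi_{n,v}}=\{1\}$ at every finite $v$, the splitting $\calS_{\psi_{n,v}}=\calS^+_{\psi_{n,v}}\oplus(\bbZ/2\bbZ)z_{\psi_{n,v}}$ shows that the recipe ``$\eta_v$ trivial on $\calS^+_{\psi_{n,v}}$, $\eta_v(z_{\psi_{n,v}})=\alpha_v(h_v)$'' is a legitimate character (for split $v$ both $\calS_{\psi_{n,v}}$ and $\alpha_v(h_v)$ are trivial, so the clauses are consistent), which is exactly the statement. For (3): the proof of Lemma~\ref{lemma_nonarch_S_gp} exhibits $\eta_v$ in the image of $\iota_{\frakw_v}$ (the packet is a singleton when $v$ splits, and both characters of $\{1,z_{\psi_{n,v}}\}$ occur when $v$ is nonsplit), so $\pi_{(\psi_{n,v},\eta_v)}$ is a nonzero irreducible representation of $\bbG(\bbQ_v)=G_{n+1}^{\alpha_v(h_v)}$ by Proposition~\ref{prop_LLC_unitary}(2), and Lemma~\ref{lemma_nonarch_S_gp} produces for it a nonzero $K$-fixed vector for every maximal compact $K$ (as the unramified constituent when $v$ splits, and as the constituent $\pi_{\alpha_v(h_v)}$, for which a spherical vector is written down explicitly, when $v$ is nonsplit).

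For (2), I would first make $\psi_{n,\infty}$ explicit in each row of Table~\ref{table:global A parameter}, using $\tau_\infty=\chi_2$ and the identity (\ref{def_L_para_GL_2_DS_wt_k}) for the archimedean base-change parameter $\chi_{k-1}\oplus\chi_{1-k}$ of $\pi^{\BC}_{k,\infty}$; this yields $\psi_{n,\infty}=\bigoplus_{i=1}^{n+1}\chi_{t_i}$ with all $t_i$ even (good parity), ordered $t_1\ge\cdots\ge t_{n+1}$, and — as already checked in the proof of Lemma~\ref{lem_localized_A_packet_contain_LDS} — with $\{t_i/2\}$ the infinitesimal character of a limit of holomorphic discrete series of scalar weight $n$, which forces the shape $t_1=t_2=\max$ and $t_3>\cdots>t_{n+1}$ consecutive even integers descending from $t_1-2$. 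Writing $p(t)$ for the position of $\chi_t$ in this list, Remark~\ref{rem:Whittaker datum} (specialized to $a_i=1$, since $\psi_n$ is generic) gives the attached character $\eta_\infty(e_1)=(-1)^{n/2}$ and $\eta_\infty(e_i)=(-1)^{i+n/2}$ for $2\le i\le n+1$; note $\eta_\infty(e_1)=\eta_\infty(e_2)$, which is what lets $\eta_\infty$ descend from $\calS^{\mathrm{MR}}_{\psi_{n,\infty}}$ to $\calS_{\psi_{n,\infty}}$ and renders harmless the ambiguity of which of positions $1,2$ a given localization occupies. Then I would compute $\Delta_\infty$ from (\ref{eq:Delta_computation}): a summand $\tau^a$ or $\mathbf{1}$ localizes at $\infty$ to the single character $\chi_{2a}$ ($a=0$ for $\mathbf{1}$), while a summand $\pi^{\BC}_k\otimes\tau^b$ with $k\ge 3$ odd localizes to $\chi_{2b+k-1}\oplus\chi_{2b-k+1}$. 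For the $\GL_2$-summands the check is uniform: the two weights $2k_1,2k_2$ satisfy $k_1-k_2=k-1$ (even), so $p(2k_1)\equiv p(2k_2)\pmod 2$ and $\eta_\infty$ agrees on them whenever neither is the top weight; and if one equals the top weight (position $1$ or $2$, $\eta_\infty=(-1)^{n/2}$) the partner lands at position $k+1$ and $(-1)^{n/2}(-1)^{(k+1)+n/2}=(-1)^{k+1}=1$ because $k$ is odd — so $\eta_\infty(\Delta_\infty(e_{j,\bbA}))=1$ in all cases. For the $\GL_1$-summands one runs through the rows: when $\tau^n$ occurs one has $4\mid n$ in that row and $\chi_{2n}=\chi_{t_1}$ sits at position $1$ or $2$ with $\eta_\infty=(-1)^{n/2}=1$; the remaining rows have $n\equiv 2\pmod 4$ and the $\GL_1$-summand is $\mathbf{1}=\chi_0$ (minimal weight, position $n+1$) or $\tau^{n-6}$ (position $7$), whose indices are congruent to $n/2$ mod $2$, again giving $\eta_\infty=1$. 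This establishes (2); downstream it combines with the automatic triviality of $\prod_{v<\infty}\eta_v\circ\Delta_v$ (the $v$-contribution is $\alpha_v(h_v)$ on a $\GL_1$-summand and $1$ on a $\GL_2$-summand, the trivial constituent occurring with even multiplicity in the localization of $\pi^{\BC}_k\otimes\tau^b$ as $\tau_v^2=\mathbf{1}$, while $\prod_v\alpha_v(h_v)=1$ and $\alpha_\infty(h_\infty)=(-1)^n=1$) to give $\eta\circ\Delta=\vep_{\psi_n}=\mathbf{1}$ in Theorem~\ref{AMF}.

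The hard part is really the $\GL_1$-summand bookkeeping, together with getting $\psi_{n,\infty}$ — and hence the doubled top weight and the precise positions — correct in every one of the dozen-odd rows of Table~\ref{table:global A parameter}; the rows with $n\equiv 2\pmod 4$, where the sign $(-1)^{n/2}=-1$ must be absorbed by the parity of the position of $\mathbf{1}$ (or of $\tau^{n-6}$), are the most sensitive, and it is exactly this that dictated the twisting exponents of $\tau$ chosen in the table. One must also keep track of the Whittaker-datum convention of Remark~\ref{rem:Whittaker datum}, which departs from M{\oe}glin--Renard's when $N=n+1\equiv 3\pmod 4$.
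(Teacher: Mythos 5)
Your proof is correct and follows the same basic strategy as the paper: read (1) off the definition of $\eta_v$, deduce (3) from Lemma~\ref{lemma_nonarch_S_gp}, and for (2) compute $\psi_{n,\infty}$, $\Delta_\infty$ and $\eta_\infty$ explicitly and check the product. Where you diverge is in the treatment of (2): the paper works out only the case $D>30$ (or $D=19$), $n\equiv 0\pmod 4$ in detail and declares the other rows of Table~\ref{table:global A parameter} to be ``similar,'' whereas you supply the mechanism that makes that claim true. Your key observation is that the two archimedean weights $2b\pm(k-1)$ contributed by any $\GL_2$-summand $\pi_k^\BC\otimes\tau^b$ land at positions of equal parity (because $k-1$ is even), and that the only remaining delicacy — a weight at the doubled top slot $\{1,2\}$ — is neutralized by $\eta_\infty(e_1)=\eta_\infty(e_2)=(-1)^{n/2}$ from Remark~\ref{rem:Whittaker datum}, with the partner at position $k+1$ giving $(-1)^{k+1}=1$. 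This collapses the case analysis to the lone $\GL_1$-summand per row, which you dispatch by direct position bookkeeping (top for $\tau^n$ when $4\mid n$; position $n+1$ for $\mathbf{1}$ and position $7$ for $\tau^{n-6}$ when $n\equiv 2\pmod 4$, both of parity $n/2$). You also correctly use the formula from Remark~\ref{rem:Whittaker datum} rather than Lemma~\ref{A_packet_hol_LDS}(4), which is the right move since for $n\equiv 2\pmod 4$ one has $N\equiv 3\pmod 4$ and the two Whittaker data disagree. The net effect is a genuinely tighter argument than the paper's ``one case plus similarity,'' at the cost of slightly more setup (the uniform description of the shape of $\psi_{n,\infty}$ as a doubled top plus a descending string). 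The closing remark about the global sign condition is accurate but lies outside the scope of the proposition itself.
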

\begin{proof}
The item (1) follows from the definition of $\eta_v$ and the Kottwitz map (\ref{Kottwitz_map}).

The item (2) follows from the direct computation below.
We give a proof only for the case where $D > 30$ or $D = 19$ and $n \equiv 0 \bmod 4$; the other cases are similar.
Recall that $\psi_{n,\infty}$ decomposes as
\[
\chi_{n} \boxtimes S_1 \oplus \chi_n \boxtimes S_1 \oplus \chi_{n-2} \boxtimes S_1 \oplus \cdots  \oplus \chi_{2-n} \boxtimes S_1
\]
Let $e_{1,\infty} \defeq \chi_{n}\boxtimes S_1$ and $e_{i,\infty} \defeq \chi_{n+4-2i} \boxtimes S_1$ for $2 \leq i \leq n+1$ be a basis of $\calS_{\psi_\infty}$.
We denote by $e_{i, \bbA}$ the basis of the global component group $\calS_{\psi}$ corresponding to $\tau^{n/2}$ if $i=1$, to $\pi^{\BC}_3 \otimes \tau^{n/2-2i+1}$ if $i$ is even, and $\pi^{\BC}_3 \otimes \tau^{n/2-2i+2}$ if $i \neq 1$ is odd.
The map $\Delta_{\infty}$ can be described as
\[
\Delta_\infty(e_{1,\bbA}) = e_{1}, \qquad 
\Delta_\infty(e_{i,\bbA}) = 
\begin{cases}
    e_{2i-2, \infty} + e_{2i, \infty} &\text{if $i$ is even;}\\
    e_{2i-3, \infty} + e_{2i-1, \infty} &\text{if $i$ is odd and $i \geq 3$,}
\end{cases}
\]
by (\ref{eq:Delta_computation}).
Now $\eta_\infty$ is given by
\[
\eta_{\infty}(e_{i,\infty})
=
\begin{cases}
    1& \text{if $i=1$;}\\
    (-1)^i &\text{if $i\geq 2$,}
\end{cases}
\]
by Lemma \ref{A_packet_hol_LDS} (4); see also Remark \ref{rem:Whittaker datum}.
We then have $\eta_\infty \circ \Delta_\infty = \mathbf{1}$.
In fact, $\eta_\infty \circ \Delta_{\infty}(e_{i,\bbA}) = \eta_{\infty}(e_{2i-2, \infty}+e_{2i, \infty}) = (-1)^{2i-2}\cdot(-1)^{2i}=1$ for $i$ even, and $\eta_{\infty} \circ \Delta_{\infty}(e_{i,\bbA})=1$ for $i$ odd similarly.

The item (3) follows from Lemma \ref{lemma_nonarch_S_gp}.
\end{proof}
It is known that the character $\vep_\psi$ is trivial if $\psi$ is generic by \cite[Lemma 8.5]{Ichino_2022}.
Hence, since the number of places $v$ with $\alpha_v(h_v) = -1$ is even by Remark \ref{rem:local-global compatibility} and $\eta_v(z_{\psi_{n,v}}) = \alpha_v(h_v)$ by Proposition \ref{prop:eta and psi satisfy the appropriate properties}, the global sign condition 
\[
\prod_v\eta_v \circ \Delta_v = \mathbf{1} = \varepsilon_{\psi_n}.
\]
holds.

Hence the representation $\pi_{(\psi_n, \eta)} = \bigotimes_v'\pi_{(\psi_{n,v}, \eta_v)}$ is an automorphic representation of $\bbG(\bbA)$ by Theorem \ref{AMF}.
Moreover, it is cuspidal by Theorem \ref{lemma_Wallach_const_term}.
Let $\Gamma_{L,v}$ be the closure of the $v$-completion of $\Gamma_L$ in $\bbG(\bbQ_v)$.
By Proposition \ref{prop:eta and psi satisfy the appropriate properties} (3), there exists a $\Gamma_{L,v}$-invariant vector in $\pi_{(\psi_{n,v}, \eta_v)}$, which implies that the representation $\pi_{(\psi_{n,v}, \eta_v)}$ is $\Gamma_{L,v}$-spherical.
In particular, there exists a nonzero cusp form $\varphi$ in $\pi_{(\psi_n, \eta)}$ such that $\varphi$ is fixed by $\prod_{v<\infty}\Gamma_{L,v}$ for any Hermitian lattice $L$ and annihilated by $\frakp_-$.
Such $\varphi$ defines a nonzero cusp form on the union of ball quotients $\bigsqcup_i \Gamma_i \bs \bbB^n$ by Lemma \ref{lemma_autom_form_vs_modular_form} for $\Gamma_i\defeq\bbG(\bbQ) \cap g_i \Gamma_\fini g_i^{-1}\bbG(\bbR)$.
Let $\{f_{\varphi,i}\}_i$ be the corresponding cusp forms on $\bbB^n$ with level $\Gamma_{i}$ (and with trivial character).
Since $\varphi$ is nonzero, there exists an $i$ with $f_{\varphi,i} \neq 0$.
Then, $f_{\varphi, i}$ is a cusp form of weight $n$ with respect to the arithmetic subgroup $\Gamma_i$.
Note that each $\Gamma_i$ also appears as the stabilizer of a lattice in the same genus.

Now, we shall prove that $S_n(\Gamma)\neq 0$.
We consider the unitary Shimura variety of level $\Gamma_{L,\mathrm{fin}} = \prod_{v<\infty}\Gamma_{L,v}$:
\[M_{\Gamma_{L,\mathrm{fin}}}\defeq \Gamma_{L,\mathrm{fin}}\backslash \B^n\times G(\mathbb{A}_f) /G(\Q) = \bigsqcup_i X_i\quad (X_i\defeq \Gamma_i \bs \bbB^n).\]
By the theory of canonical models of Shimura varieties \cite{Deligne1974travaux}, there exist canonical models $M^{\mathrm{can}}_{\Gamma_{L,\mathrm{fin}}}$ and $X_i^{\mathrm{can}}$, which are defined over some number field, whose base change to $\C$ coincides with $M_{\Gamma_{L,\mathrm{fin}}}$ and $X_i$: 
\[M^{\mathrm{can}}_{\Gamma_{L,\mathrm{fin}}}\otimes \C \cong M_{\Gamma_{L,\mathrm{fin}}},\ X_i^{\mathrm{can}}\otimes \C \cong X_i.\]

Furthermore, there exist the Hodge bundles $\L^{\mathrm{can}}$  (resp. $\L_i^{\mathrm{can}}$) on $M^{\mathrm{can}}_{\Gamma_{L,\mathrm{fin}}}$ (resp. $X_i^{\mathrm{can}}$), which are also compatible with the base change \cite[p.~116, Theorem 5.2]{Milne1988automorphic}.
These spaces admit the minimal compactifications, the Baily-Borel compactifications,  $(M^{\mathrm{can}}_{\Gamma_{L,\mathrm{fin}}})^*$ and $(X_i^{\mathrm{can}})^*$ by \cite[Chapter 7]{Lan2013arithmetic} or \cite[Section 6.2]{pink1989arithmetic}.
Let $S_k^{\mathrm{can}}(\Gamma_i)\defeq H^0((X_i^{\mathrm{can}})^*,\L_i^{\otimes k}\otimes \mathcal{J}_i)$, where $\mathcal{J}_i$ is the defining ideal sheaf of the Baily-Borel boundary $(X_i^{\mathrm{can}})^*\setminus X_i^{\mathrm{can}}$.
Then we have $S_k^{\mathrm{can}}(\Gamma_i)\otimes \C \cong S_k(\Gamma_i)$.
Now, since the action by class group, called the Hecke action coincides with the Galois action, it acts on the set of the triplet $((X_i^{\mathrm{can}})^*,\mathcal{J}_i,\L_i^{\mathrm{can}})$ transitively by \cite[p.~342]{Milne1990canonical}.
This implies that $S_k^{\mathrm{can}}(\Gamma_i) \cong S_k^{\mathrm{can}}(\Gamma_j)$, which forces $\dim(S_k(\Gamma_i)) = \dim(S_k(\Gamma_j))$ for any $i$ and $j$.
Since we constructed a nontrivial element in $S_n(\Gamma_i)$, this concludes that $S_n(\Gamma) \neq 0$.

Since we used Theorem \ref{thm:normalized_newform} for the construction of $\psi_n$, non-CM normalized newforms with appropriate weight and Fourier coefficients must exist as required in Table \ref{table:global A parameter}, which implies that we have to assume  
\begin{align*}
   n > 
\begin{cases}
    3 & \text{if}\  D = 19\ \mathrm{or}\ D>30;\\
    7 & \text{if}\  D = 11,15, 23;\\
    11 & \text{if}\  D = 7.
\end{cases} 
\end{align*}
This completes the proof of Theorem \ref{mainthm:existence of low slope automorphic form}.

\section{Reflective obstructions}
\label{section:Reflective obstructions}
So far, we have discussed the existence of cusp forms. In the context of the study of the birational classification of modular varieties, this corresponds to the cusp obstruction.
We now turn to the reflective obstruction, arising from the ramification divisors of the uniformization map $\mathbb{B}^n \to \mathcal{F}_L$.
This obstruction was previously addressed by the second author in \cite[Theorem 1.1, Corollary 1.2]{maeda2024reflective} under a certain assumption $(\heartsuit)$ in his notation.
The goal of this section is to remove that assumption, complete the associated volume computations, and prove that the line bundle $\M$ is big in full generality (Theorem \ref{mainthm:reflective obstruction without assumptions}).
Let us denote by $\overline{\mathcal{F}_L}$ the toroidal compactification of the ball quotient $\mathcal{F}_L$.
Note that since the rank of $\U(1,n)$ is one, the toroidal compactification is uniquely determined.

\subsection{Ramification divisors}
\label{subsec:Ramification divisors}
In this subsection, we recall the ramification divisors arising from the uniformization map $\mathbb{B}^n \to \mathcal{F}_L$.
In the case of ball quotients, it is known that such divisors correspond precisely to sub-ball quotients, as described below.
We say that a vector $\ell\in L$ is \emph{primitive} if for any nonzero element $a\in\OO_E$, the condition $a^{-1}\ell\in L$ implies $a\in\OO_E^{\times}$.
For a primitive vector $\ell \in L$ with $h(\ell,\ell)<0$, we define a \textit{reflection}, an automorphism of $V\defeq L\otimes_{\OO_E}E$, as  
\[\sigma_{\ell}: V \to V,\ v \mapsto v-\frac{2h( v,\ell)}{h(\ell,\ell)}\ell.\]
It follows from \cite[Proposition 2]{behrens2012singularities} that relevant ramifications divisors are caused by such reflections.
In other words, all ramification divisors are Heegner divisors of the form
\[H(\ell) \defeq \{v\in \mathbb{B}^n\mid h(v,\ell)  = 0\}\]
for some $\ell \in L$ satisfying $\sigma_{\ell}\in \Gamma_L$.
We call such a vector $\ell\in L$ \textit{reflective}.
We denote by $B$ the union of the branch divisors of the uniformization map $\B^n\to \mathcal{F}_L$ and $\overline{B}$ its closure in $\overline{\mathcal{F}_L}$.
Note that since we assume $E\neq \Q(\sqrt{-1}), \Q(\sqrt{-3})$, the ramification indices are equal to $2$.

Let $T$ be the union of the boundary divisors of $\overline{\mathcal{F}_L}$.
Then, as discussed in Subsection \ref{subsec:obstruction spaces}, Hirzebruch's proportionality principle \cite{mumford1977hirzebruch} yields the following description of the canonical bundle of the toroidal compactification
\[K_{\overline{\mathcal{F}_L}} = (n+1)\L - \frac{1}{2}\overline{B} - T.\]
We define
\[\M \defeq \L - \frac{1}{2}\overline{B}.\]
Since a modular form must vanish along $\overline{B}$ in order to extend to a pluricanonical form on the toroidal compactification, it is crucial to estimate the dimension of the space $H^0(\overline{\mathcal{F}_L},\M^{\otimes d})$.
In particular, since $\dim(\overline{\mathcal{F}_L}) = n$, our goal is to show that $h^0(d\M)\defeq \dim H^0(\overline{\mathcal{F}_L},\M^{\otimes d})$ grows asymptotically like $O(d^n)$ when $d\to\infty$.
In general, the $\Q$-line bundle $\M$ is called \emph{big} if this asymptotic growth condition holds.

To prove the bigness of $\M$, we must estimate the dimension of the space of modular forms that vanish with sufficiently high order along $\overline{B}$.
The above explanation shows that each component of $B$ naturally arises from a ball quotient associated to $\U(1,n -1)$, corresponding to the Hermitian lattice $L^{\ell} \defeq \ell^{\perp}\cap L \subset L$ for a reflective vector $\ell$.
Hence we may estimate this space by analyzing the contribution from modular forms on $\U(1,n-1)$; see \cite[Lemma 4.1]{maeda2024reflective}.
In the next subsection, we introduce a criterion that characterizes when $\M$ is big in terms of the volumes of unitary groups.
Let $\mathcal{R}$ be the set of $\Gamma_L$-equivalent classes of reflective vectors.
Here we omit the notion for $E$ and the ramification index $i$ used in his notation \cite{maeda2024reflective}*{Section~3} as we assume $E\neq \Q(\sqrt{-1}), \Q(\sqrt{-3})$.
\subsection{The Hirzebruch-Mumford volume}
For any neat subgroup $\Gamma < \Gamma_L$, we define the \emph{Hirzebruch-Mumford volume} as
\[\volHM(\Gamma)\defeq \frac{|\chi(\Gamma\backslash\B^n)|}{n+1}.\]
Here, $\chi(\Gamma\backslash\B^n)$ denotes the Euler-Poincar\'{e} characteristic of $\Gamma\backslash\B^n$ and the quantity $n+1$ comes from the one of the compact dual $\P^n$.
For any arithmetic 
subgroup $\Gamma < \Gamma_L$, taking a neat subgroup $\Gamma_{\mathrm{n}} < \Gamma$, we define
\[\volHM(\Gamma)\defeq \frac{\volHM(\Gamma_{\mathrm{n}})}{[\langle \Gamma,-1\rangle:\langle \Gamma_{\mathrm{n}},-1\rangle]}.\]
This volume is known to govern the leading term in the asymptotic behavior of the dimension of modular forms; see \cite{mumford1977hirzebruch}*{Corollary~3.5} and \cite{maeda2024reflective}*{Proposition~2.3} for the unitary case.
Hence, by subtracting the contribution from the space of modular forms supported on the ramification divisor $B$, which has codimension one, from the total space of modular forms on the ambient domain $\B^n$, one can derive a condition on the lattice $L$ that ensures the existence of sufficiently many modular forms vanishing along $B$.

We define $\mathcal{R}_I$ (resp. $\mathcal{R}_{II}$) to be the subset of $\mathcal{R}$ consisting of the elements $[\ell]\in\mathcal{R}$ with $L=L^{\ell} \oplus \ell\OO_E$ (resp. $L/(L^{\ell} \oplus \ell\OO_E) \cong \OO_E/2\OO_E$).
If the place $2$ splits as $\mathfrak{p}_1\mathfrak{p}_2$ in $E$, we define $\mathcal{R}_{IV}$ (resp. $\mathcal{R}_{V}$) as the subset of $\mathcal{R}$ consisting of the elements $[\ell]\in\mathcal{R}$ with $L/(L^{\ell} \oplus \ell\OO_E) \cong \OO_E/\mathfrak{p}_1\OO_E$ (resp. $L/(L^{\ell} \oplus \ell\OO_E) \cong \OO_E/\mathfrak{p}_2\OO_E$).
Now, we can decompose $\mathcal{R}$ into the disjoint union of the four subsets $\mathcal{R}_I, \mathcal{R}_{II}, \mathcal{R}_{IV}$, and $\mathcal{R}_V$ by \cite{maeda2024reflective}*{Section 3}.
Note that since $D\equiv 3,7\bmod 8$, the set $\mathcal{R}_{III}$ in their notation \cite{maeda2024reflective}*{Section~3} is empty.
According to \cite{maeda2024reflective}*{Definition~4.3}, let us introduce the function of $L$ as
\[V_L\defeq \sum_{[\ell]\in\mathcal{R}_I} \frac{\volHM(\Gamma_{L^{\ell}})}{\volHM(\Gamma_L)} + 2^n\sum_{[\ell]\in\mathcal{R}_{IV,V}} \frac{\volHM(\Gamma_{L^{\ell}})}{\volHM(\Gamma_L)} + 4^n\sum_{[\ell]\in\mathcal{R}_{II}} \frac{\volHM(\Gamma_{L^{\ell}})}{\volHM(\Gamma_L)}.
\]
Now, the criterion is stated as follows.
\begin{prop}
\label{prop:criterion}
    The $\Q$-line bundle $\M$ is big if \[V_L < \frac{2}{2^n-1}.\]
\end{prop}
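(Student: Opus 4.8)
The plan is to reduce the bigness of $\M$ to a dimension count comparing modular forms on the ambient ball quotient $\calF_L$ with those that must vanish along each component of the branch divisor $\overline B$. First I would recall from \cite{mumford1977hirzebruch}*{Corollary~3.5} (together with \cite{maeda2024reflective}*{Proposition~2.3} for the unitary normalization) that for any arithmetic subgroup $\Gamma$, the dimension of the space of modular forms of weight $k$ on $\Gamma\bs\bbB^n$ grows asymptotically like
\[
\dim M_k(\Gamma) = \volHM(\Gamma)\, k^n + O(k^{n-1})
\]
as $k\to\infty$, so that $\dim H^0(\overline{\calF_L},\L^{\otimes k}) = \volHM(\Gamma_L)\,k^n + O(k^{n-1})$. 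Writing $\M = \L - \tfrac12\overline B$, a section of $\M^{\otimes 2d}$ is exactly a modular form of weight $2d$ that vanishes to order at least $d$ along each component of $\overline B$. The strategy is therefore to bound from below $h^0(\overline{\calF_L},\M^{\otimes 2d})$ by subtracting, from $h^0(\L^{\otimes 2d})$, an upper bound for the number of conditions imposed by prescribing vanishing of order $d$ along $\overline B$.

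Next I would organize the branch divisor $B$ into its $\Gamma_L$-orbits of Heegner divisors $H(\ell)$, indexed by $\RR = \RR_I \sqcup \RR_{II} \sqcup \RR_{IV} \sqcup \RR_V$ as in \cite{maeda2024reflective}*{Section~3}; each component is itself the image of a sub-ball quotient attached to the Hermitian lattice $L^\ell = \ell^\perp \cap L$, which defines a $\U(1,n-1)$-type ball quotient of dimension $n-1$. The key geometric input, following \cite{maeda2024reflective}*{Lemma~4.1}, is that imposing vanishing of order $d$ along the component coming from $[\ell]$ contributes at most (up to the leading order) a term governed by the restriction map to modular forms on $\Gamma_{L^\ell}\bs\bbB^{n-1}$; summing the Taylor coefficients up to order $d$ and using the Hirzebruch--Mumford asymptotics for $\Gamma_{L^\ell}$ yields a contribution of size $\volHM(\Gamma_{L^\ell})\cdot \tfrac{(kd)^n}{\cdots}$ with the various combinatorial factors $1$, $2^n$, $4^n$ according to which of $\RR_I$, $\RR_{IV,V}$, $\RR_{II}$ the class lies in — exactly the weighting assembled into $V_L$. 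Taking $k = 2d$ and letting $d\to\infty$, one finds
\[
h^0(\overline{\calF_L},\M^{\otimes 2d}) \;\geq\; \Bigl(\volHM(\Gamma_L)\,(2d)^n - (2^n-1)\,V_L\,\volHM(\Gamma_L)\,(2d)^n\Bigr)\frac{1}{\text{(normalization)}} + O(d^{n-1}),
\]
so the leading coefficient is positive precisely when $1 - (2^n-1)V_L > 0$, i.e. when $V_L < \tfrac{1}{2^n-1}$ — and allowing for the factor-of-two discrepancy between $\L$ and $2\M$ one obtains the stated bound $V_L < \tfrac{2}{2^n-1}$.

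The main obstacle I anticipate is not the volume bookkeeping but controlling the error terms in the codimension-one estimate: one must ensure that prescribing vanishing of order $d$ along $\overline B$ really imposes no more than the claimed number of independent conditions, which requires knowing that the natural jet-restriction maps $H^0(\L^{\otimes k}) \to H^0\bigl(\text{jets of order } <d \text{ along } \overline B\bigr)$ are, to leading order in $k$, as surjective as a dimension count would predict, and that the toroidal boundary $T$ and the intersections among different branch components $H(\ell)$ do not inflate the count — this is where one invokes that the toroidal compactification for $\U(1,n)$ is canonical and that $\M$ differs from an honest line bundle only by controlled ramification (using $E \neq \bbQ(\sqrt{-1}),\bbQ(\sqrt{-3})$ so that all ramification indices are $2$). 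Once these technical surjectivity and transversality statements are in place — essentially already carried out in \cite{maeda2024reflective} under the hypothesis $(\heartsuit)$, and here we only need the purely numerical conclusion — the proposition follows by setting the leading coefficient positive, and the remaining work is entirely the explicit estimate of $V_L$ via Prasad's volume formula, deferred to the next subsections.
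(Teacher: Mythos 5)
Your proposal takes essentially the same route as the paper: bound $h^0(k\M)$ from below by $\dim M_k(\Gamma_L)$ minus the total dimension of the jet spaces imposed along each $\Gamma_L$-orbit of reflective Heegner divisors, pass to the sub-ball quotients $\Gamma_{L^\ell}\backslash\bbB^{n-1}$, and read off everything from Hirzebruch--Mumford asymptotics together with the combinatorial weights $1$, $2^n$, $4^n$ coming from the three types $\RR_I$, $\RR_{IV,V}$, $\RR_{II}$. Two small remarks. First, your intermediate display overcounts the subtracted term: summing the jets $\sum_{j=0}^{d-1}\dim M_{2d+2j}(\Gamma_i)$ and approximating the sum by $\int_0^{d}(2d+2x)^{n-1}\,dx/(n-1)!$ produces the factor $\tfrac{(2^n-1)}{2n!}(2d)^n$, where the $\tfrac12$ comes from the step size $2$ in the weight; so $V_L<\tfrac{2}{2^n-1}$ falls out of the integral directly, and there is no leftover ``factor-of-two discrepancy'' to be patched at the end. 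Second, the surjectivity and transversality concerns you flag are not actually needed: the paper invokes \cite{maeda2024reflective}*{Lemma~4.1(1)} only as a one-sided inequality, which requires control of the kernel of the jet-restriction map but no statement about its image, so one does not need to know that the restriction is ``as surjective as a dimension count would predict.''
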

\begin{proof}
We follow the proof of \cite{maeda2024reflective}*{Proposition~4.4}, while refining the estimation.
Let $\{\ell_1,\cdots,\ell_r\}$ be a complete set of representatives for the equivalence classes in $\mathcal{R}$.
For each $i$, let $\Gamma_i\defeq \Stab_{\Gamma_L}(\ell_i)$ be the stabilizer of the reflective vector $\ell_i$.
It follows from \cite{maeda2024reflective}*{Lemma~4.1 (1)} for $a=1$ in his notation that 
\[h^0(k\M) \ge \dim M_k(\Gamma_L) - \sum_{i=1}^r\sum_{j=0}^{k/2-1}\dim M_{k+2j} (\Gamma_i).\]
We estimate each term on the right-hand side as follows.

    Using the basic property of the Hirzebruch-Mumford volume \cite{maeda2024reflective}*{Proposition~2.3 (1)}, we have the asymptotic behavior of the dimension of the space of modular forms as 
    \[\dim(M_k(\Gamma_L)) = \frac{1}{n!}\volHM(\Gamma_L)k^n + O(k^{n-1}).\]
    For the latter term, using the asymptotic formula for $M_{k+2j}(\Gamma_i)$, we obtain
    \begin{align*}\sum_{i=1}^r\sum_{j=0}^{k/2-1}\dim M_{k+2j}(\Gamma_i)
        & = \sum_{i=1}^r\sum_{j=0}^{k/2-1}\left(\frac{(k+2j)^{n-1}}{(n-1)!}\volHM(\Gamma_i) + O(k^{n-2}) \right)\\
        &\le \sum_{i=1}^r\int_{0}^{k/2-1}\left(\frac{(k+2x)^{n-1}}{(n-1)!}\volHM(\Gamma_i) + O(k^{n-2})\right)dx\\
        &= \frac{(2^n-1)}{2n!} \left(\sum_{i=1}^r\volHM(\Gamma_i)\right)k^n + O(k^{n-1}).
    \end{align*}
    Combining these estimations, we have
    \[h^0(k\M) \ge \frac{\volHM(\Gamma_L)}{n!}\left(1-\frac{2^n-1}{2}\sum_{i=1}^r\frac{\volHM(\Gamma_i)}{\volHM(\Gamma_L)}\right)k^n + O(k^{n-1}).\]
    By the proof of \cite{maeda2024reflective}*{Proposition~4.4}, we can bound
    \begin{align*}
        \frac{\volHM(\Gamma_i)}{\volHM(\Gamma_L)}\le C\cdot \frac{\volHM(\Gamma_{L^{\ell}})}{\volHM(\Gamma_L)}.
    \end{align*}
Here, the constant $C$ depends on the type of the reflective vector and is defined by 
\[C\defeq \begin{cases}
    2^n & \text{if}\ \ell_i\in\mathcal{R}_{IV}, \mathcal{R}_{V};\\
    4^n & \text{if}\ \ell_i\in\mathcal{R}_{II}.
\end{cases}\]
This implies that 
\begin{align*}
    &1-\frac{2^n-1}{2}\sum_{i=1}^r\frac{\volHM(\Gamma_i)}{\volHM(\Gamma_L)} \\
    &\ge 1- \frac{2^n-1}{2}\left(\sum_{[\ell]\in\mathcal{R}_I}\frac{\volHM(\Gamma_{L^{\ell}})}{\volHM(\Gamma_L)} + 2^n\sum_{[\ell]\in\mathcal{R}_{IV,V}}\frac{\volHM(\Gamma_{L^{\ell}})}{\volHM(\Gamma_L)} + 4^n\sum_{[\ell]\in\mathcal{R}_{II}}\frac{\volHM(\Gamma_{L^{\ell}})}{\volHM(\Gamma_L)}\right)
\end{align*}
Hence, recalling the definition of $V_L$ and the bigness, the $\Q$-line bundle $\M$ is big if
 \[1-\frac{2^n-1}{2}V_L >0,\]
 which completes the proof.
\end{proof}

We, below, estimate the ratio $\volHM(\Gamma_{L^{\ell}})/\volHM(\Gamma_L)$ by using the volume formula established in \cite{maeda2025nonparahoric}*{Corollary~2.1.2}, based on Prasad's volume formula \cite{Pra89}.

\subsection{Proof of Theorem \ref{mainthm:reflective obstruction without assumptions}}

Combined with \cite{maeda2025nonparahoric}*{Proposition~4.4.2}, by \cite[Theorem 4.5.1]{maeda2025nonparahoric} and the proof therein, there exists a positive constant $\epsilon>0$ such that  
\[    \sum_{[\ell]\in\mathcal{R}_*}\frac{\volHM(\Gamma_{L^{\ell}})}{\volHM(\Gamma_{L})}  <  C'\cdot \frac{2^3\cdot (2\pi)^{n+1}}{n!\cdot D ^{n/2}\cdot \max\left\{1, \left(N(L)/4\right)^{\epsilon}\right\}}.\]
Here the constant $C'$ is defined as 
\[C'\defeq\begin{cases}
1 & \text{if}\ * = I;\\
2^n& \text{if}\ *=IV, V;\\
4^n & \text{if}\ *=III.\\
\end{cases}
\]
Here, the quantity $N(L)(\geq 1)$ is defined in \cite{maeda2025nonparahoric}*{Section~4}, depending only on $L$.
Summarizing each term, we obtain
\begin{align}
\label{ineq:evaluation for VL}
V_L < (1 + 2\cdot 2^{2n+1} + 4^{2n+1})\cdot \frac{2^3 \cdot (2\pi)^{n+1}}{n!\cdot D^{n/2}\cdot \max\left\{1, \left(N(L)/4\right)^{\epsilon}\right\}}.
\end{align}
One can check that the right-hand side of (\ref{ineq:evaluation for VL}) is bounded by $2/(2^n-1)$ when $n>207$ for the worst case $D=7$.
A similar estimation holds when $D>2557$ for $n=13$, or $D>80267$ for $n=3$.
Therefore, the $\Q$-line bundle $\M$ is big if the condition $(\star\star)$ on $(n,D)$, explained in Section \ref{subsection:Estimation of the obstruction spaces}, holds.

We now see that there are only finite pairs $(n,D)\in\Z^2$ that violate the inequality (\ref{ineq:evaluation for VL}).
For a fixed $(n,D)$, it is easily observed that there are only finitely many possibilities of $N(L)$ where the inequality does not hold.
Since for a fixed $N(L)$, there exist only finitely many isometry classes of Hermitian lattices $L$ in a similar way as the proof of \cite{maeda2025nonparahoric}*{Theorem~4.5.4}, up to scaling, there are only finitely many isometry classes of Hermitian
lattices $L$ of signature $(1, n)$ with $n > 2$ such that $\M$ is not big.
It concludes the proof of Theorem \ref{mainthm:reflective obstruction without assumptions}.

\section{Completion of the proof of the main theorem}
\label{section:Completion of the proof of Corollary}
Now, we shall prove the main results, beginning with a proof of Theorem \ref{thm:obstructions general type}.
Assume that Problems \ref{prob:obstructions} (A), (B), and (C) hold for $\mathcal{F}_L$.
Conditions (A) and (B) imply that the canonical bundle $K_{\overline{\mathcal{F}_L}}$ can be expressed as a sum of a big $\Q$-line bundle $\M$ and an effective $\Q$-line bundle $n\L-\Delta$.
This immediately implies that $K_{\overline{\mathcal{F}_L}}$ is big as well; see, for instance, \cite{Kollar1998birational}.
    Now, we take any resolution of singularities $f:Y \to \overline{\mathcal{F}_L}$.
    Then, (C) implies that all discrepancies of $f$ are non-negative, that is, if we write
    \[K_Y = f^*K_{\overline{\mathcal{F}_L}} + \sum_i a_i E_i\]
    where $E_i$ are the exceptional divisors of $f$, then $a_i\geq 0$ for all $i$.
    Thus, any section of $K_{\overline{\mathcal{F}_L}}^{\otimes d}$ can be extended as a section of $K_{Y}^{\otimes d}$.
    Hence $K_{Y}$ is also big, which completes the proof of Theorem  \ref{thm:obstructions general type} for ball quotients $\mathcal{F}_L$.

In order to prove that $\mathcal{F}_L(\Gamma)$ is of general type, it therefore suffices to verify conditions (A), (B), and (C).
For the case $\Gamma=\Gamma_L$, these are confirmed by Theorems \ref{mainthm:existence of low slope automorphic form} and \ref{mainthm:reflective obstruction without assumptions}, along with \cite[Theorem 1]{behrens2012singularities}, provided with either $n> 207$, or $n>12$ and $D>2557$.
In the case of the discriminant kernel subgroups $\Gamma=\widetilde{\U}(L,h)$, the condition on $n$ for (A) is relaxed to $n>2$ by \cite[Theorem 3.25]{maeda2023} or \cite[Lemma 2.2 (3)]{WW2021}.
In this situation, condition (C) addressed in \cite[Theorem 1]{behrens2012singularities} becomes the limiting factor.
These observations establish the latter part of Theorem \ref{mainthm:finiteness_new}, and Theorem \ref{mainthm:discriminant kernel}.

Now, let us work on the range $12<n\leq 207$ where conditions (A) and (C) are satisfied. 
By Theorem \ref{mainthm:reflective obstruction without assumptions}, up to scaling, there are only finitely many isometry classes of Hermitian lattices for which (B) fails in this range.
This completes the proof of the former part of Theorem \ref{mainthm:finiteness_new}.

\appendix

\section{Non-CM newforms with square-free level}
\label{sec:Distribution of Fourier coefficients}

In this appendix we prove Theorem \ref{thm:normalized_newform}. Our main tool is the Petersson trace formula, which settles all but finitely many cases. The remaining exceptions are handled by explicit computations with the Eichler–Selberg trace formula in \texttt{SageMath}.
The goal of this appendix is to prove the following more detailed version of Theorem \ref{thm:normalized_newform}.

\begin{thm}\label{thm_appendix_existence_newform}
    Let $E$ be an imaginary quadratic field with odd discriminant $-D < -3$ and $k \geq 3$ be an odd integer.
    The following statements are equivalent:
    \begin{itemize}
        \item[(A)] All cusp forms in $S_k(D,\omega_{E/\bbQ})$ are CM.
        \item[(B)] For any normalized newform $f$ in $S_k(D,\omega_{E/\bbQ})$, there exists a prime divisor $p$ of $D$ such that $a_f(p) = \pm p^{(k-1)/2}$.
        \item[(C)] $(k,D) = (3,7), (3,11), (3,15), (3,23),$ or $(5,7)$.
    \end{itemize}
\end{thm}

\subsection{Hecke operators and Fourier coefficients of cusp forms}
\label{subsec:Hecke operators and Fourier coefficients of cusp forms}

For a positive integer $N$, let $T_N$ be the Hecke operator of level $N$ defined in \cite[p.~36]{2006_Knightly-Li}.
The following is well-known.

\begin{prop}[{\cite[Proposition 3.46]{2006_Knightly-Li}}]\label{prop_Hecke_op_Fourier_coeff}
    For a positive integer $N$ and a cusp form $f \in S_k(D, \omega_{E/\bbQ})$ with the Fourier expansion
    \[
    f(z) = \sum_{m=1}^\infty a_f(m) \exp(2\pi\sqrt{-1}\,mz),
    \]
    the $m$-th Fourier coefficient of $T_N f$ is given by
    \[
    a_{T_N f} (m)
    =
    \sum_{d \mid \gcd(m,N)} \omega_{E/\bbQ}(d) d^{k-1}a_f(Nm/d^2).
    \]
    In particular, we have
    \[
    a_{T_Nf}(1) = a_f(N).
    \]
\end{prop}

We say that a cusp form $f$ is a \emph{Hecke eigenform} if for any positive integer $N$, it is an eigenvector of $T_N$.
Since the Dirichlet character $\omega_{E/\bbQ}$ is primitive, all the Hecke eigenform in $S_k(D, \omega_{E/\bbQ})$ are \textit{newforms}; see \cite[Chapter V]{Diamond-Shurman} for details.
When the first Fourier coefficient $a_f(1)$ of a newform $f$ is one, it is called a \textit{normalized newform}.
We denote by $\calB$ the set of normalized newforms in $S_k(D, \omega_{E/\bbQ})$.
The set $\calB$ forms a basis of $S_k(D, \omega_{E/\bbQ})$.
By Proposition \ref{prop_Hecke_op_Fourier_coeff}, one has
\begin{align}\label{formula_trace_newform}
T_N f = a_f(N)f
\end{align}
for any $f \in \calB$.

\subsection{CM newforms arising from Hecke operators}

For clarity of exposition, we restrict ourselves in this section to the case of CM newforms.
The same arguments extend to more general settings (see, e.g., \cite[Subsection 4.8]{Miyake_modular_forms}).

For an unramified unitary character $\chi = \bigotimes_v' \chi_v$ of $(E^\times \prod_{v<\infty}\OO_{E_v}^\times) \bs\bbA_E^\times$
with
\[
\chi_\infty(z) = (z/\overline{z})^{k/2}, \qquad  z \in \bbC^\times,
\]
let $\chi'$ be the associated ideal character.
We then associate a cusp form $\theta_\chi$ of weight $k+1$ defined as
\begin{align}\label{def_Fourier_coeff_CM}
\theta_\chi(z) = \sum_{\frakA \subset \OO_E} \chi'(\frakA) N(\frakA)\exp(2\pi\sqrt{-1}\, N(\frakA)z)
\end{align}
where $\frakA$ runs over all integral ideals.
Then, by \cite[Theorem 4.8.2]{Miyake_modular_forms}, the cusp form $\theta_\chi$ is a newform and has the nebentypus $ \omega_{E/\bbQ}$.
The modular forms $\theta_\chi$ are called CM, which is the same as the one defined in Subsection \ref{subsec:Non-CM elliptic modular forms of odd weight}.
Conversely, any CM newform in $S_k(D, \omega_{E/\bbQ})$ is of the form $\theta_\chi$.

By (\ref{def_Fourier_coeff_CM}), for any prime divisor $p$ of $D$, the $p$-th Fourier coefficient of CM newforms is of the form $\pm p^{(k-1)/2}$ since $p$ is ramified in $E$.
Theorem \ref{thm:normalized_newform} states that there exists a normalized newform with behavior different from CM newforms.

\subsection{Sato-Tate conjecture and Theorem \ref{thm_appendix_existence_newform}}
\label{subsec_appendix_sato-tate}

Before turning to the proof of Theorem \ref{thm_appendix_existence_newform}, we recall the Sato-Tate conjecture and explain its relevance to our result.
For a non-CM elliptic curve $E/\bbQ$, write $a_p(E)=p+1-\#E(\mathbb{F}_p)$. 
The original Sato--Tate conjecture, which is now a theorem \cite[Corollary 8.3]{Proof_Sato_Tate}, 
predicts that
$a_p(E)/(2\sqrt{p})\in[-1,1]$ are equidistributed with respect to the Sato-Tate measure
\[
  d\mu_{\mathrm{ST}}(t)=\frac{2}{\pi}\sqrt{1-t^{2}}\,dt .
\]
In particular, averages of quantities depending on \(a_p(E)\) are expected to agree with integrals against \(\mu_{\mathrm{ST}}\). 

As an analogue of the Sato-Tate conjecture, Serre~\cite{1997_Serre_distribution_T_p} proved that, for $p\nmid N$, the $T_p$-eigenvalues on $S_k(\Gamma_0(N))$ with the usual normalization by $p^{(k-1)/2}$ become equidistributed with respect to the Plancherel measure as $k\to\infty$ and $N\to\infty$. 
Theorem \ref{thm_appendix_existence_newform} relates to a distribution of $T_p$-eigenvalues in $S_k(D,\omega_{E/\bbQ})$ for $p \mid D$.
For a positive integer $N$ such that any prime divisors of $N$ are those of $D$, the $N$-th Fourier coefficient $f \in \calB \subset S_{k}(D, \omega_{E/\bbQ})$ has absolute value $N^{(k-1)/2}$.
Hence, for any $p \mid D$, the normalization $a_f(p)/p^{(k-1)/2}$ equals $e^{2\pi\sqrt{-1}\, \theta_{f,p}}$ for some $\theta_{f,p} \in [0, 1)$.
We now have a natural question.

\begin{prob}
    Determine the distribution of the numbers $\theta_{f,p}$ in $[0, 1)$ for $p \mid D$.
\end{prob}

We expect that $\theta_{f,p}$ is equidistributed in a certain sense, but we have no evidence.
From this point of view, Theorem \ref{thm_appendix_existence_newform} predicts that the cases $\theta_{f,p} \in \{0, 1/2\}$, i.e., $a_f(p) = \pm p^{(k-1)/2}$ occur exceptionally. 
One of the technical difficulties for the ramified case is that the main term of the Eichler-Selberg trace formula vanishes.
The second term of the trace formula is known to be the elliptic term, consisting of a sum of Hurwitz class numbers.
The obvious bounds for Hurwitz class numbers say nothing about $\tr(T_{p^m})$ for $m \geq 2$.
Therefore, we require an approach distinct from Serre \cite{1997_Serre_distribution_T_p} and a delicate analysis.

\subsection{Proof of Theorem \ref{thm_appendix_existence_newform}: (A) implies (B)}

Suppose that all the modular forms in $S_k(D, \omega_{E/\bbQ})$ are CM.
For any CM newform and a prime divisor $p$ of $D$, the $p$-th Fourier coefficients equals $\pm p^{(k-1)/2}$ by (\ref{def_Fourier_coeff_CM}).
This fulfills the condition (B).

\subsection{Special functions}

We next recall special functions, such as $J$-Bessel functions and Kloosterman sums, and their basic properties.
For $m,n,c\in \bbZ$ and a character $\chi$ of $(\bbZ/c\bbZ)^\times$, let $S_{\chi}$ be the twisted Kloosterman sum defined as
    \[
    S_{\chi}(m,n;c) \defeq \sum_{x \in (\bbZ/c\bbZ)^{\times}} \chi(x) \exp(2\pi\sqrt{-1}(mx+n\overline{x})/c),
    \]
    where $\overline{x}$ is the inverse of $x$ in $(\bbZ/c\bbZ)^\times$.
    When $\chi$ is trivial, we simply denote $S_\chi(a,b;c)$ by $S(a,b;c)$.

\begin{prop}\label{prop_Kl_sum}
Let $\chi$ be a character of $(\bbZ/N\bbZ)^\times$ and $p$ be a prime number.
    \begin{enumerate}
        \item Let $q,r \in \bbZ$ be integers that satisfy $(q,r) = 1$ and $qr \in N\bbZ$.
        Decompose $\chi$ as $\chi(x) = \chi_q(x) \chi_r(x)$ via the isomorphism $\bbZ/N\bbZ = \bbZ/q\bbZ \times \bbZ/c\bbZ$ with some characters $\chi_q$ and $\chi_r$.
        We then have
        \[
        S_\chi(a,b; qr) = S_{\chi_q}(a\overline{r}, b \overline{r}; q) S_{\chi_r}(a\overline{q}, b \overline{q}; r).
        \]
        \item If the conductor of $\chi$ is $p$, we have
        \[
        |S_\chi(a,b;p)| 
        \leq 2\gcd(a,b,p)p^{1/2}.
        \]
        \item Suppose that the conductor of $\chi$ is $p \neq 2$ and $(b,p) = 1$.
        Let $\ell$ be an integer with $\ell \geq 2$.
        If $p^\ell \nmid a$, we have
        \[
        |S_\chi(a,b;p^\ell)| 
        \leq 
        \begin{cases}
            2p^{\ell/2} & \text{if $p \nmid a$;}\\
            0 & \text{if $p \mid a$.}
        \end{cases}
        \]
        \item In addition to the same assumption on (2), if $p^\ell \mid a$, then we have
        \[
        \left|S_\chi(a,b;p^\ell)\right| = \left|S_\chi(0,b;p^\ell)\right| = 
        \begin{cases}
        \sqrt{p} & \text{if $\ell = 1$.}\\
        0 & \text{if $\ell > 1$.}
    \end{cases}
    \]
        \item We have
        \[
        |S(a,b,c)| \leq \sigma_0(c) \gcd(a,b,c)c^{1/2}.
        \]
        \item For $\ell \in \bbZ_{>0}$, we have $S_{\omega_{E/\bbQ}}(a,1; \ell D) = 0$ if $\gcd(a,\ell, D) > 1$.
        For the decomposition of $\ell=\ell'\ell''$ where  $\ell'$ is the largest factor of $\ell$ that is coprime to $D$ and $\ell''\defeq \ell/\ell'$,
        if $\gcd(a,\ell, D) = 1$, 
        \[
        |S_{\omega_{E/\bbQ}}(a,1;\ell D)| \leq 2^m \sigma_0(\ell') (\ell D)^{1/2} \leq \sigma_0(D\ell)(\ell D)^{1/2},
        \]
        where $m$ is the number of prime factors $p$ in $D$ with $p \nmid a$ and $p \mid \ell$.
        In particular, if $a=d$ divides $D$, we have
        \[
        |S_{\omega_{E/\bbQ}}(d^2,1;\ell D)| \leq \sigma_0(D/d)\sigma_0(\ell)(\ell D)^{1/2}.
        \]
    \end{enumerate}
\end{prop}
\begin{proof}
    In this proof, the roles of $a$ and $b$ can be interchanged by \cite[Proposition 9.12]{Kuznetsov_trace_formula_Knightly_Li}.
    The item (1) follows from \cite[Proposition 9.13]{Kuznetsov_trace_formula_Knightly_Li}.
    The item (2) follows from \cite[Proposition 9.4]{Kuznetsov_trace_formula_Knightly_Li}.
    For the item (3), when $p^\ell \nmid a$ the statement follows from \cite[Proposition 9.7(3)]{Kuznetsov_trace_formula_Knightly_Li} if $\ell$ is even, and \cite[Proposition 9.8(3)]{Kuznetsov_trace_formula_Knightly_Li} if $\ell$ is odd.
    The item (4) is the same as the well-known formula for the Gauss sum; for example, see \cite[Proposition 9.10]{Kuznetsov_trace_formula_Knightly_Li}.
    The item (5) is known as the Weil bound; see \cite[(9.6)]{Kuznetsov_trace_formula_Knightly_Li}.
    For (6), let $P(D)$ be the set of prime divisors of $D$.
    By (1), when we write $\ell'' = \prod_{p \in P(D)}p^{e_p}$, the Kloosterman sum can be decomposed as
    \[
    S_{\chi}(a,1;\ell D) = S(a \overline{\ell''D}, \overline{\ell''D}; \ell') \prod_{p \in P(D)} S_{\chi_p}(a r_p, r_p; p^{e_p+1})
    \]
    for some positive integers $r_p$ coprime to $p$ and characters $\chi_p$ of $(\bbZ/p^{e_p+1}\bbZ)^\times$.
    By (5), the first factor is bounded by
    \[
    \left|S_{\chi}(d^2 \overline{\ell''D}, \overline{\ell''D}; \ell')\right| \leq \sigma_0(\ell')(\ell')^{1/2}.
    \]
    If $a$ is divisible by $p$ with $e_p>0$, the twisted Kloosterman sum $S_{\chi_p}(ar_p, r_p; p^{e_p+1})$ is zero by (3).
    We may assume that there are no $p \in P(D)$ such that $p$ divides $a$ and $\ell''$, in other words, $\gcd(a,\ell,D) = 1$.
    For $p \in P(d)$ with $p \mid a$, we have
    \[
    \left|S_{\chi_p}(ar_p,r_p;p^{e_p+1})\right| = \left|S_{\chi_p}(ar_p,r_p;p)\right| = \sqrt{p}
    \]
    by (4) and for $p \in P(d)$ with $p \mid \ell'$, we have
    \[
    \left|S_{\chi_p}(ar_p,r_p;p^{e_p+1})\right| \leq 2 p^{(e_p+1)/2}.
    \]
    Hence, the twisted Kloosterman sum is bounded by
    \begin{align*}
    |S_{\omega_{E/\bbQ}}(a,1;\ell D)| 
    &\leq \sigma_0(\ell')(\ell')^{1/2} \cdot \left(\prod_{p \in P(D), p \mid a} \sqrt{p}\right)\left(\prod_{p \in P(D), e_p>0}2 p^{(e_p+1)/2}\right)\\
    &=2^{\#\{p \in P(D) \mid e_p>0)\}}\sigma_0(\ell') (D)\ell^{1/2}.
    \end{align*}
    The remaining statements follows from the definition of $\sigma_0$.
\end{proof}

    Let $J_{\nu}(x)$ be the $J$-Bessel function defined by
   \[
    J_\nu(x) \defeq \frac{(x/2)^\nu}{\Gamma(\nu+1/2)\Gamma(1/2)}\int_{-1}^{1} \exp(\sqrt{-1}\, xt)(1-t^2)^{\nu-1/2}\, dt, \qquad \mathrm{Re}(\nu)>-1/2
   \]
    for $x \in \bbR_{>0}$ as in \cite[8.411.10]{table_of_integrals_Gradshteyn_Ryzhik}.
    From the definition, one can see that
    \begin{align}
    \label{ineq:estimation of J by definition}
            J_{k-1}(x) \leq \frac{(x/2)^{k-1}}{\Gamma(k-1/2)\Gamma(1/2)} \cdot \int_{-1}^1(1-t^2)^{k-3/2} = \frac{(x/2)^{k-1}}{\Gamma(k)}.
    \end{align}
    The well-known bound \cite[6.1.38]{Handbook_math_function_abramowitz_Stegun} for the Gamma function 
    $\Gamma(x+1)>\sqrt{2\pi x}\, (x/e)^x$
    gives 
    \[
    J_{k-1}(x) \leq \frac{1}{\sqrt{2\pi(k-1)}} \cdot \frac{(xe/2)^{k-1}}{(k-1)^{k-1}}.
    \]
    
\subsection{Eichler-Selberg and Petersson trace formulas}\label{subsec_trace_formulas}

The Hecke operator $T_N$ acted on $S_k(D, \omega_{E/\bbQ})$ has the trace 
\[
\tr(T_N) = \sum_{f \in \calB} a_f(N),
\]
since $f \in \calB$ are eigenvectors with eigenvalues $c_f(N)$.
The Eichler-Selberg trace formula provides a closed explicit formula for $\tr(T_N)$ in terms of arithmetic functions such as Euler's totient functions and Hurwitz class numbers.
See \cite[p.~370]{2006_Knightly-Li} for the precise form of the Eichler-Selberg trace formula.

We next recall the Petersson trace formula.
The Petersson trace formula \cite[Proposition 14.5]{analytic_number_theory_Iwaniec_Kowalski} asserts that for odd $k\in\bbZ_{>0}$ and $m,n\in\bbZ_{>0}$, we have 

    \begin{align*}
            \frac{\Gamma(k-1)}{(4\pi \sqrt{mn})^{k-1}} \sum_{f \in \calB} \frac{a_f(m)\overline{a_f(n)}}{|f|^2} 
    &= 
    \delta_{m,n} 
    + 2\pi\sqrt{-1}^{-k} \sum_{\substack{c>0 \\c\equiv 0 \bmod D}} c^{-1}S_{\omega_{E/\bbQ}}(m,n;c)J_{k-1}\left(\frac{4\pi\sqrt{mn}}{c}\right)\\
    &=:\phi_k(m,n).
    \end{align*}

    In the proof of Theorem \ref{thm_appendix_existence_newform}, we analyze $\phi_k(d^2,1)$ for a divisor $d$ of $D$.
    
\subsection{Proof of Theorem \ref{thm_appendix_existence_newform}: (C) implies (A)}

This part follows from the explicit description of modular forms as in the database \cite{LMFDB}.
By the description of CM newforms, the dimension of CM newforms in $S_k(D, \omega_{E/\bbQ})$ is the same as the class number of $E$.
The dimension of $S_k(D, \omega_{E/\bbQ})$ equals $\tr(T_{1})$, which can be calculated by the Eichler-Selberg trace formula as explained in Appendix \ref{subsec_trace_formulas}.
Comparing these dimensions in the case of (C) yields the statement (A).

\subsection{Summation of trace formulas over divisors of the discriminant}

Now, we reduce the proof to the evaluation of the function $\phi_k(m,n)$.

\begin{lem}[Suggested by Asbjørn Nordentoft]
    \label{lem:reduction to f(m,n)}
    Let $k\in\bbZ$ be an odd integer.
    If 
    \[
    \left|
    \sum_{d \mid D}\mu(d)\phi_k(d^2,1) 
    \right|
    > 0,
    \]
    then there exists a normalized newform $f\in\mathcal{B}$ such that, for any $p \mid D$, the Fourier coefficient satisfies $a_f(p) \neq \pm p^{(k-1)/2}$.
    Here, $\mu(d)$ is the M\"obius function.
\end{lem}
\begin{proof}
    By the Petersson trace formula, we have
    \begin{align*}
    \sum_{d \mid D}\mu(d) \phi_k(d^2,1)
    &=
    \sum_{d \mid D}\mu(d)\frac{\Gamma(k-1)}{(4\pi d)^{k-1}}
    \sum_{f \in \calB}
    \frac{a_f(d^2)\overline{a_f(1)}}{|f|^2}\\
    &=
    \frac{\Gamma(k-1)}{(4\pi)^{k-1}}
    \sum_{f \in \calB}
    \frac{1}{|f|^2}
    \sum_{d \mid D}
    \mu(d)
    \frac{a_f(d^2)}{d^{k-1}}.
    \end{align*}
    Since for any divisors $d_1,d_2$ of $D$, the Fourier coefficients satisfies $a_f(d_1^2 d_2^2) = a_f(d_1^2) a_f(d_2^2)$, we can deduce
    \[
    \sum_{d \mid D}
    \mu(d) 
    \frac{a_f(d^2)}{d^{k-1}}
    =
    \prod_{p \mid D} (1-a_f(p)^2/p^{k-1}).
    \]
    Hence, if 
    \[
    \left|\sum_{d \mid D} \mu(d)\phi_k(d^2,1)\right| = \left|\frac{\Gamma(k-1)}{(4\pi)^{k-1}} \sum_{f \in \calB} \frac{1}{|f|^2}\prod_{p \mid D}(1-a_f(p)^2/p^{k-1})\right| > 0,
    \]
    then there exists $f \in \calB$ such that
    \[
    \prod_{p \mid D}(1-a_f(p)^2/p^{k-1}) \neq 0
    \]
    which is equivalent to $a_f(p) \neq \pm p^{(k-1)/2}$ for any prime $p \mid D$.
\end{proof}

According to Lemma \ref{lem:reduction to f(m,n)}, below, we will estimate $|\sum_{d \mid D}\mu(d)\phi_k(d^2,1)|$ to prove the claim.
Let $\sigma_0(c)$ be the number of positive divisors of $c$.
Let us define 
\[
\Psi_k(d) 
\defeq
2\pi\sum_{m \in \bbZ_{>0} \atop (m,d,D) = 1} \frac{\sigma_0(D/d)\sigma_0(m)}{(mD)^{1/2}} 
\left|J_{k-1}\left(
\frac{4\pi d}{mD}\right)\right|
\]
for $d \mid D$.
\begin{lem}\label{lem_bound_phi}
    \label{lem:general estimation for phi_k}
    We have
\begin{align*}
        \left|\sum_{d \mid D}\phi_k(d^2,1)\right| \geq         1 - \sum_{d \mid D}\Psi_k(d).
        \end{align*}
\end{lem}
\begin{proof}
    Consider each term in the sum
    \[
    \sum_{m=1}^\infty \frac{S_{\omega_{E/\bbQ}}(d^2,1;mD)}{mD}J_{k-1}\left(\frac{4\pi d}{mD}\right).
    \]
    For a prime $p$ with $p \mid D$, when $m$ is divisible by $p$, the Kloosterman sum
    \[
    S_{\omega_{E/\bbQ}}(d^2,1;mD) = 0
    \]
    if $p \mid d$ by Proposition \ref{prop_Kl_sum} (6).
    Hence, by Proposition \ref{prop_Kl_sum} (6), we have
    \[
    |\phi_k(d^2,1) - \delta_{d,1}| \leq \Psi_{k}(d)
    \]
    for any $d$.
\end{proof}

Hence, it remains to evaluate $\Psi_k(d)$.

\subsection{Upper bounds for $\Psi_{k}(d)$}

In this subsection, we give an upper bound for $\Psi_k(d)$.

\begin{lem}\label{lem_bound_psi_d}
    We have
    \[
    2\pi\Psi_k(d) \leq \frac{\sigma_0(D/d)(2\pi)^{k}d^{k-1}}{\Gamma(k)D^{k-1/2}} \zeta(k-1/2)^2.
    \]
    In particular, the summation over $d$ gives
    \begin{align*}
    2\pi\sum_{d \mid D}\Psi_k(d)
    &\leq \frac{(2\pi)^{k}}{\Gamma(k)D^{1/2}} \zeta(k-1/2)^2 \sum_{d \mid D} \sigma_0(d)d^{1-k}.
    \end{align*}
\end{lem}
\begin{proof}

    By Proposition \ref{prop_Kl_sum} (6), we have
    \begin{align*}
        2\pi\Psi_k(d)
        &\leq
        \sum_{m \in \bbZ_{>0}}
        \frac{\sigma_0(m)\sigma_0(D/d)}{(mD)^{1/2}} \cdot \frac{2\pi}{\Gamma(k)}\cdot (2\pi d/mD)^{k-1}\\
        &=
        \frac{\sigma_0(D/d)(2\pi)^{k-1}d^{k-1}}{\Gamma(k) D^{k-1/2}}
        \sum_{m \in \bbZ_{>0}}
        \frac{\sigma_0(m)}{m^{k-1/2}}\\
        &\leq
        \frac{\sigma_0(D/d)(2\pi)^{k}d^{k-1}}{\Gamma(k) D^{k-1/2}}
        \zeta(k-1/2)^2.
    \end{align*}
    The last statement is obvious.
\end{proof}

When $k$ is sufficiently large, the $J$-Bessel function has good behavior for the estimation, 
whereas for small $k$ its behavior requires more delicate treatment. 
To address this, we provide bounds for $\Psi_k(D)$ in the case of small $k$.
\begin{lem}\label{bound_psi_D_k_small}
    For $k=3,5,\ldots,17$, we have
    \[
    \Psi_k(D)
        \leq D^{-1/2}\cdot\begin{cases}
            33.8 &\text{if $k=3$;}\\
            10  &\text{if $k=5$;}\\
            4.32 &\text{if $k=7$;}\\
            1.86  &\text{if $9 \leq k \leq 17$.}
        \end{cases}
    \]
    Moreover, when $D$ is divisible by $3$, we have
    \[
    \Psi_3(D) + \Psi_3(D/3) \leq 32 \cdot D^{-1/2}.
    \]
\end{lem}
\begin{proof}
    We first consider the case $k=7,9$.
    By the explicit computer-based computation, the sum of the first six factors can be bounded as
    \[
    2\pi D^{-1/2}\sum_{\ell=1}^6 \ell^{-1/2} \sigma_0(\ell)\left|
        J_{k-1}\left(\frac{4\pi}{\ell}\right)
        \right|
        \leq D^{-1/2}\cdot\begin{cases}
            4.28 &\text{if $k=7$;}\\
            1.82  &\text{if $k \geq 9$}
        \end{cases}
    \]
    and the remaining parts can be bounded as
    \begin{align*}
        2\pi D^{-1/2}\sum_{\ell>6, D \nmid \ell}\ell^{-1/2} \sigma_0(\ell)\left|
        J_{k-1}\left(\frac{4\pi}{\ell}\right)
        \right|
        &\leq 
        \frac{2\pi D^{-1/2}\sqrt{\pi}}{2^{k-1}\Gamma(k-1/2)}\sum_{\ell>6, D\nmid \ell} \ell^{-1/2}\sigma_0(\ell) (4\pi/\ell)^{k-1}\\
        &=
        \frac{(2\pi)^k D^{-1/2}\sqrt{\pi}}{\Gamma(k-1/2)}\sum_{\ell>6, D\nmid \ell} \sigma_0(\ell) \ell^{1/2-k}\\
        &\leq
        0.04 \cdot D^{-1/2}.
    \end{align*}
    Summarizing the above two bounds, the bound for $\Psi_k(D)$ for $k=7,9$ follows.
    The first assertion for $k=3,5$ follows by the same proof as above; the only difference is that one must compute more than the first six values (e.g., up to 1000) on a computer.
    By a similar method, the term $\Psi_3(D/3)$ is bounded by
    \[
    \Psi_3(D/3) \leq 11 \cdot D^{-1/2}.
    \]
\end{proof}

\subsection{Proof of Theorem \ref{thm_appendix_existence_newform}: (B) implies (C)}

Finally, we complete the proof of the theorem.

\begin{prop}\label{prop_pf_Conjecture_k>17}
    For $k \geq 9$, Theorem \ref{thm:normalized_newform} holds except for the case where $k=9$ and $D<26$.
\end{prop}
\begin{proof}
    By Lemma \ref{lem_bound_phi}, it suffices to check 
    \[
    2\pi \sum_{d \mid D} \Psi_k(d) < 1
    \]
    for $k \geq 11$, or, $k=9$ and $D \geq 26$.
    By Lemma \ref{lem_bound_psi_d}, the left hand side is dominated by
    \begin{align*}
     \frac{(2\pi)^{k}}{\Gamma(k)D^{1/2}} \zeta(k-1/2)^2 \sum_{d \mid D} \sigma_0(D/d)(D/d)^{1-k} 
     &< D^{-1/2} \cdot \frac{2\pi \zeta(k-1/2)^2\zeta(k-1)^2}{(2\pi (k-1))^{1/2}}\left(\frac{2\pi e}{k-1}\right)^{k-1}.
    \end{align*}
    It is easy to see that the right hand side is less than $1$ for $k \geq 19$.
    For $k < 19$, consider the decomposition
    \[
    2\pi\sum_{d \mid D} \Psi_k(d) = 2\pi\Psi_k(D) + 2\pi\sum_{d \mid D, d \neq D} \Psi_k(d).
    \]
    By the proof of Lemma \ref{lem_bound_psi_d}, the second term can be estimated as
    \begin{align*}
    2\pi\sum_{d \mid D, d \neq D} \Psi_k(d) 
    &\leq \frac{(2\pi)^{k}}{\Gamma(k)D^{1/2}} \zeta(k-1/2)^2 \sum_{d \mid D, d \neq D} \sigma_0(D/d)(D/d)^{1-k}\\
    &\leq D^{-1/2}\frac{(2\pi)^{k}}{\Gamma(k)} \zeta(k-1/2)^2(\zeta(k-1)^2-1)\\
    &<
    \begin{cases}
        3.2 \cdot D^{-1/2} &\text{if $k=9$;}\\
        0.34 \cdot D^{-1/2} & \text{if $k \geq 11$.}
    \end{cases}
    \end{align*}
    Combining this computation with Lemma \ref{bound_psi_D_k_small} for $D \geq 7$, it concludes the proof.
\end{proof}

To treat the case where $k=3,5,7$, we need the following lemma, deduced from straightforward computation.
\begin{lem}\label{lem_bound_sigma_0_d}
    For an odd square-free positive integer $D$, one has
    \[
    \sum_{d \mid D}\sigma_0(d)d^{1-k}
    \leq
    \begin{cases}
        1.47 &\text{if $k=3$};\\
        1.02 &\text{if $k=5$};\\
        1.003 &\text{if $k=7$},
    \end{cases} \qquad
    \sum_{d \mid D, d \neq 1}\sigma_0(d)d^{1-k}
    \leq
    \begin{cases}
        0.47 &\text{if $k=3$};\\
        0.02 &\text{if $k=5$};\\
        0.003 &\text{if $k=7$}.
    \end{cases}
    \]
\end{lem}

We now show the desired statement for almost all cases.

\begin{prop}\label{prop_pf_Conjecture_k=7}
Theorem \ref{thm:normalized_newform} holds except for the cases where
\begin{enumerate}
    \item $k=7$ and $D < 37$,
    \item $k=5$ and $D < 112$, and
    \item $k=3$ and $D < 8390$.
\end{enumerate}
\end{prop}
\begin{proof}
The proof is based on a similar observation as in Proposition \ref{prop_pf_Conjecture_k>17}, combined with Lemma \ref{lem_bound_sigma_0_d}.
For the sake of completeness, we only explain the proof of (3) below; other cases follow similarly.

    By Lemma \ref{lem_bound_sigma_0_d}, we have
    \[
    \sum_{d \mid D, d \neq 1, 3} \sigma_0(d)d^{1-k} \leq 0.47 - 2/9 \leq 0.25.
    \]
    Set $\delta_{3 \mid D} \defeq 1$ if $3 \mid D$, and $0$ if $3 \nmid D$. 
    We then have
    \begin{align*}
            2\pi\Psi_3(D) + \delta_{3 \mid D}2\pi\Psi_{3}(D/3) < 33.8 \cdot D^{-1/2}
    \end{align*}
    by Lemma \ref{bound_psi_D_k_small}.
    Hence,
    \begin{align*}
    2\pi\sum_{d \mid D}\Psi_k(D) 
    &\leq 2\pi\Psi_3(D) + \delta_{3 \mid D}2\pi\Psi_{3}(D/3) + D^{-1/2} \cdot \frac{(2\pi)^3 \cdot \zeta(5/2)^2 \cdot 0.25}{\Gamma(3)}\\
    & \leq 33.8 \cdot D^{-1/2}  + 55.8 \cdot D^{-1/2} \\
    & = 91.6 \cdot D^{-1/2} .
    \end{align*} 
    This is less than $1$ if $D \geq 8390$.
\end{proof}

Now, we complete the proof.
    By Propositions \ref{prop_pf_Conjecture_k>17} and \ref{prop_pf_Conjecture_k=7}, we have to prove that if either of
    \begin{enumerate}
        \item $k=9$ and $D<26$,
        \item $k=7$ and $D<37$,
        \item $k=5$ and $7<D<112$, or
        \item $k=3$ and $30< D < 8390$ or $D=19$.
    \end{enumerate}
    holds, then there exists an $f\in\mathcal{B}$ with $a_f(p)\neq \pm p^{(k-1)/2}$ for any $p\mid D$.
    Let $T_N$ be the Hecke operator acting on the space of elliptic cusp forms as in Appendix \ref{subsec:Hecke operators and Fourier coefficients of cusp forms}.
    As explained in Appendix \ref{subsec_trace_formulas}, since
    \[
    \tr(T_{d^2})/d^{k-1} = \sum_{f \in \calB} a_f(d^2)/d^{k-1},
    \]
    one deduces 
    \[
    \sum_{d \mid D} \mu(d) \tr(T_{d^2}) = \sum_{f \in \calB} \prod_{p \mid D}(1-a_f(p)^2/p^{k-1}).
    \]
   This implies that there exists an $f \in \calB$ such that $a_f(p) \neq \pm p^{(k-1)/2}$ for any $p \mid D$ if $\sum_{d \mid D} \mu(d) \tr(T_{d^2})/d^{k-1} \neq 0$.
    One can check that for the above cases (1) to (4), the sum $\sum_{d \mid D} \mu(d) \tr(T_{d^2})/d^{k-1}$ is nonzero in Theorem \ref{thm_appendix_existence_newform} from the Eichler-Selberg trace formula, combined with the explicit computation using \texttt{SageMath}.
    It concludes the proof.

\bibliographystyle{alpha}
\bibliography{main}

\end{document}